\documentclass[11pt]{amsart}
\usepackage{amsmath, amsthm, amssymb, epsfig, amsfonts, bbm, euscript, graphicx, url}
\usepackage{pb-diagram}
\usepackage{lamsarrow}
\usepackage[colorlinks]{hyperref}

\newtheorem{theorem}{Theorem}[subsection]
\newtheorem{lemma}[theorem]{Lemma}
\newtheorem{proposition}[theorem]{Proposition}
\newtheorem{corollary}[theorem]{Corollary}

\newtheorem{definition}[theorem]{Definition}

\newtheorem{conjecture}[theorem]{Conjecture}

\theoremstyle{definition}
\newtheorem{step}{Step}
\newtheorem{example}[theorem]{Example}

\newtheorem{remark}[theorem]{Remark}

\numberwithin{equation}{section}

\oddsidemargin 30pt
\evensidemargin .0in
\marginparsep 0pt
\topmargin 0.2pt
\marginparwidth 0pt
\baselineskip 14pt
\textwidth 6in
\textheight 8.6in

\newskip\aline \newskip\halfaline
\aline=12pt plus 1pt minus 1pt \halfaline=6pt plus 1pt minus 1pt

\DeclareMathOperator{\GL}{GL}

\DeclareMathOperator{\Int}{Int}
\DeclareMathOperator{\id}{id}

\newcommand{\bC}{\mathbb C}

\newcommand{\bR}{\mathbb R}

\newcommand{\bZ}{\mathbb Z}




\title{Hamiltonian $S^1$ actions with Isolated Fixed Points on $6$-Dimensional Symplectic Manifolds}
\author{Andrew Fanoe}
\begin{document}
\maketitle
\begin{abstract}The question of what conditions guarantee that a symplectic $S^1$ action is Hamiltonian has been studied for many years.  In \cite{TW} Sue Tolman and Jonathon Weitsman proved that if the action is semifree and has a non-empty set of isolated fixed points then the action is Hamiltonian.  Furthermore, in \cite{CHS} Cho, Hwang, and Suh proved in the 6-dimensional case that if we have $b_2^+=1$ at a reduced space at a regular level $\lambda$ of the circle valued moment map, then the action is Hamiltonian.  In this paper, we will use this to prove that certain 6-dimensional symplectic actions which are not semifree and have a non-empty set of isolated fixed points are Hamiltonian.  In this case, the reduced spaces are 4-dimensional symplectic orbifolds, and we will resolve the orbifold singularities and use J-holomorphic curve techniques on the resolutions.
\end{abstract}
\section{Introduction}
\subsection{Statement of Results}
Consider a closed symplectic manifold $(X^6,\omega)$ with a symplectic $S^1$ action.  In this note, we will prove a special case of the following conjecture.  
\begin{conjecture}\label{mainconjecture}
If $(X^6,\omega)$ has a symplectic $S^1$ action which has a non-empty set of isolated fixed points, then the action is Hamiltonian.
\end{conjecture}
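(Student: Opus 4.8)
The plan is to deduce the conjecture from the Cho--Hwang--Suh criterion by controlling the topology of the reduced spaces of the circle-valued moment map. Since the action is symplectic, $\iota_\xi\omega$ is a closed $1$-form whose cohomology class vanishes exactly when the action is Hamiltonian; following Cho, Hwang, and Suh I would work instead with the associated circle-valued moment map $\mu\colon X\to\bR/\bZ$, whose critical points are precisely the isolated fixed points. Fixing a regular value $\lambda$, the reduced space $M_\lambda=\mu^{-1}(\lambda)/S^1$ is a closed symplectic orbifold of dimension $4$; because the action is not assumed semifree, the points with nontrivial cyclic stabilizer $\bZ_k$ produce isolated cyclic quotient singularities in $M_\lambda$, and away from critical values its orbifold diffeomorphism type is locally constant. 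Since Cho--Hwang--Suh guarantees that the action is Hamiltonian as soon as $b_2^+(M_\lambda)=1$ at some regular level, the whole problem reduces to computing this single invariant.

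Next I would pass to a smooth model by resolving the orbifold singularities. Each cyclic quotient singularity is replaced by its minimal resolution, a Hirzebruch--Jung chain of embedded spheres of negative self-intersection, and this resolution can be carried out symplectically, yielding a smooth closed symplectic $4$-manifold $\widetilde{M}_\lambda$. The key point is that the exceptional configuration is negative definite, so resolving enlarges $b_2^-$ while leaving $b_2^+$ unchanged; hence $b_2^+(\widetilde{M}_\lambda)=b_2^+(M_\lambda)$, and one may compute the invariant that matters on the smooth resolution, where $J$-holomorphic curve techniques are available.

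On $\widetilde{M}_\lambda$ I would invoke McDuff's structure theorem: a closed symplectic $4$-manifold containing a symplectically embedded sphere of nonnegative self-intersection is rational or ruled, and in particular has $b_2^+=1$. The fixed-point data supplies the candidate spheres. Each isolated fixed point carries three nonzero weights, and as $\lambda$ crosses the corresponding critical value the reduced space undergoes a weighted blow-up or blow-down dictated by those weights, creating invariant symplectic spheres in $M_\lambda$. I would track these wall-crossing modifications level by level, using the signs and magnitudes of the weights, and show that for the actions under consideration some regular level acquires a symplectic sphere of nonnegative self-intersection that survives to the resolution; McDuff's theorem then forces $b_2^+=1$, and the conjecture follows.

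The main obstacle is the wall-crossing analysis in the non-semifree case. When stabilizers are larger than $\bZ_2$, crossing a critical value is not a simple blow-up but a weighted birational modification that both alters existing orbifold singularities and introduces new ones, and one must check that the chosen resolutions match coherently across walls. Controlling $b_2^+$ uniformly through this chain of modifications---ensuring that the distinguished nonnegative sphere genuinely appears and is not destroyed by a later wall-crossing---is the crux, and it is presumably here that the hypotheses singling out \emph{certain} actions (restrictions on the weights or on the admissible stabilizer types) must be imposed, so that every reduced space stays within the rational or ruled regime.
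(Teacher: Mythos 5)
Your setup through the reduction to $b_2^+(M_\lambda)=1$ matches the paper exactly: the circle-valued moment map, the orbifold reduced spaces with isolated cyclic quotient singularities, the Hirzebruch--Jung resolutions $\widetilde{M}_\lambda$, the observation that resolving leaves $b_2^+$ unchanged, and the appeal to the Cho--Hwang--Suh criterion. The gap is in the crux, and it is the step you yourself flag as unresolved: you propose to conclude by exhibiting, at some regular level, a symplectically embedded sphere of \emph{nonnegative} self-intersection and invoking the rational/ruled theorem. But the wall-crossings never hand you such a sphere. Crossing a critical value with weights $(p,q,-1)$ performs a $(p,q)$-weighted blowup, and the curves this creates in $\widetilde{M}_\lambda$ are a $-1$ exceptional sphere $\widetilde{E}$ together with two Hirzebruch--Jung chains of spheres of square $\leq -2$; everything produced is negative, and nothing in your outline ever upgrades this to a nonnegative sphere. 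Indeed, if a nonnegative sphere were visibly present the problem would be over immediately, which is precisely why one cannot expect it to appear directly from the fixed-point data.

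The paper's actual contradiction is of a different nature. Assuming $b_2^+(\widetilde{M}_\lambda)>1$ for every regular $\lambda$, the set of exceptional classes on each $\widetilde{M}_\lambda$ is finite and any two distinct exceptional classes are disjoint (this is where McDuff's immersed-sphere theorem enters: two intersecting $-1$-spheres glue to an immersed sphere with $c_1=2$, forcing $b_2^+=1$). One then builds a coherent family $\widetilde{J}(\lambda)$ of almost complex structures over all of $S^1$ (the ``generalized bundle'' formalism) and shows that the exceptional sphere $\widetilde{E}_1^+$ born at one critical level persists through every subsequent wall-crossing --- the disjointness forced by $b_2^+>1$ is exactly what prevents it from being absorbed or destroyed --- while its symplectic area strictly increases as $\lambda$ winds counterclockwise around $S^1$. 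Traversing the circle repeatedly therefore yields infinitely many distinct exceptional classes on a fixed $(\widetilde{M}_{\lambda_0},\widetilde{\omega}_{\lambda_0})$, contradicting finiteness. So the missing idea in your proposal is not the coherence of resolutions across walls (the paper does need that, via the lifting lemma for orbifold diffeomorphisms) but the replacement of ``find a nonnegative sphere at one level'' by ``show a negative exceptional sphere survives around the whole circle with monotonically growing area''; it is in making that persistence argument work that the restriction to weights $(\pm p,\pm q,\mp 1)$ and the absence of codimension-$2$ isotropy are actually used.
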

In order to prove this, we will consider the case where the action is not Hamiltonian.  In this case, McDuff noticed that if the symplectic class is integral, the circle action is determined by a moment map with values in $S^1$.  By perturbing the symplectic form, we can assume that $[\omega]$ is rational so that a large multiple of $[\omega]$ is integral, so that we can always assume have such a moment map.  Moreover, we also have no critical points of index or co-index $0$, or of odd index or co-index.  In particular, we only have critical points of index $2$ and $4$.

For each $\lambda$ in $S^1$, we can form the reduced space $M_{\lambda}$ by first considering $\Phi^{-1}(\lambda)$, where $\Phi$ is the moment map, and then quotienting this by the $S^1$ action.  The resulting space will in general be a four dimensional symplectic orbifold with orbifold singularities corresponding to non-trivial isotropy of the $S^1$ action.  We assume that all orbifold singularities are isolated points.  We will resolve these singularities by successive blowups, which adds curves of self intersection $-2$ or less.  We denote the resulting space $\widetilde{M}_\lambda$, and we call it the resolution of $M_\lambda$.  For more details on these resolutions, see section $2$.

We will use the following theorem of Cho, Hwang, and Suh from \cite{CHS} to prove a special case of Conjecture \ref{mainconjecture} above.

\begin{theorem}\label{b+=1impliesdone}
Let $(X^6,\omega)$ be a closed symplectic manifold with a symplectic $S^1$ action with non-empty fixed point set.  Then if there exists a regular value $\lambda$ of the $S^1$ moment map such that the reduced space $M_{\lambda}$ satisfies $b_2^+(M_\lambda)=1$, the action is Hamiltonian.
\end{theorem}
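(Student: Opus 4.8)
The plan is to argue by contradiction: assume the action is not Hamiltonian. As set up in the introduction, after perturbing $\omega$ to a rational class I may pass to a circle-valued moment map $\Phi\colon X\to S^1$ whose critical points are exactly the isolated fixed points, all of index and coindex in $\{2,4\}$, with none of index $0$ or $6$. I would then study the whole $S^1$-family of reduced spaces $\{M_\mu\}_{\mu\in S^1}$ together with their reduced forms $\omega_\mu$, and show that the hypothesis $b_2^+(M_\lambda)=1$ cannot coexist with $\Phi$ being genuinely $S^1$-valued.

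The first step is to record the wall-crossing behaviour. Over any arc of regular values the spaces $M_\mu$ are canonically diffeomorphic and, by the Duistermaat--Heckman theorem, the reduced classes vary affinely, $[\omega_\mu]=[\omega_{\mu_0}]-(\mu-\mu_0)\,e$, where $e\in H^2(M_{\mu_0};\mathbb{R})$ is the locally constant Euler class of the circle bundle $\Phi^{-1}(\mu)\to M_\mu$. Crossing a critical value carrying an index-$2$ fixed point blows the $4$-orbifold up (deleting a point and inserting a sphere of negative self-intersection), while crossing an index-$4$ point performs the reverse blow-down; each such surgery leaves $b_2^+$ unchanged. Hence $b_2^+(M_\mu)\equiv b_2^+(M_\lambda)=1$ for every regular $\mu$. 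Since reversing the circle action negates all weights and thereby interchanges index $2$ with index $4$, the blow-ups and blow-downs occur in equal numbers and compose, around the whole circle, to the monodromy self-map of $M_\lambda$.

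The decisive step is to exploit the Lorentzian signature $(1,b_2^-)$ of the intersection form that $b_2^+=1$ supplies. Because $[\omega_\mu]^2=2\,\mathrm{Vol}(M_\mu)>0$, each class $[\omega_\mu]$ lies in one of the two convex components of the positive cone, and continuity pins it to a single ``forward'' cone for all $\mu$. I would show that the affine Duistermaat--Heckman motion, together with the jumps of $e$ across critical values, forces $\mu\mapsto[\omega_\mu]$ to trace a closed loop whose holonomy encodes the class $[\iota_\xi\omega]\in H^1(X;\mathbb{R})$, the very obstruction to the action being Hamiltonian. The rigidity of a forward cone in Lorentzian signature (reverse Cauchy--Schwarz) should then force this loop to be trivial, i.e. $\iota_\xi\omega$ to be exact, contradicting non-Hamiltonicity. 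As an alternative route for this step, I would invoke the classification of symplectic $4$-manifolds with $b_2^+=1$ (rational or ruled, after Li--Liu and Taubes), use the resulting foliation of $M_\lambda$ by symplectic spheres, and lift it through the circle bundle to manufacture an extremum of $\Phi$, which again makes the action Hamiltonian.

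I expect the main obstacle to be the orbifold singularities. The positive-cone and intersection-form arguments, and still more the classification of $b_2^+=1$ symplectic surfaces, are cleanest on smooth manifolds, so one must either phrase everything in $S^1$-equivariant orbifold cohomology or first pass to the resolutions $\widetilde{M}_\mu$ and verify that resolving---which only adjoins classes of self-intersection $-2$ or less---changes neither $b_2^+$ nor the Duistermaat--Heckman bookkeeping. Tracking how these resolutions vary compatibly with the wall-crossing maps as $\mu$ runs around $S^1$ is the technical heart of the argument.
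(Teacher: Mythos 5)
You should first be aware that the paper does not prove this statement at all: it is imported verbatim from Cho, Hwang, and Suh \cite{CHS} and used as a black box (the sentence immediately preceding the theorem says exactly this, and Remark \ref{blowupremark} only explains why it suffices to check $b_2^+=1$ on the resolutions). So there is no in-paper argument to compare yours against; your proposal has to stand on its own as a proof of an external result.

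Judged on its own, the proposal has a genuine gap at the decisive step. The bookkeeping in your first paragraph is fine and consistent with the machinery the paper does develop (Lemma \ref{local form of (p,q,-r)}, Lemma \ref{homologyinvariantlemma}, Remark \ref{blowupremark} on the invariance of $b_2^+$ under blowup), although note that the theorem as stated only assumes the fixed point set is non-empty, not that it is isolated, so wall-crossing can be more complicated than a point blowup. The real problem is the second step. The assertion that the loop $\mu\mapsto[\omega_\mu]$ has ``holonomy encoding $[\iota_\xi\omega]$'' and that Lorentzian cone rigidity forces it to be trivial is exactly where all the work lies: the Duistermaat--Heckman class $e$ jumps at every critical level, and controlling the signs and magnitudes of those jumps well enough to contradict periodicity of the volume function is the entire content of the Cho--Hwang--Suh argument; you have not supplied it. Worse, your fallback route rests on a false statement: $b_2^+=1$ does \emph{not} imply that a symplectic $4$-manifold is rational or ruled (Dolgachev surfaces and the Barlow surface are symplectic, have $b_2^+=1$, and are of general type). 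The correct implication requires an additional hypothesis such as an immersed symplectic sphere with $c_1\geq 2$ --- which is precisely Theorem \ref{dusab+=1lemma} of the paper, used there in the opposite direction, to \emph{establish} $b_2^+=1$ --- or a condition like $K\cdot\omega<0$. Finally, ``lifting a ruling to manufacture an extremum of $\Phi$'' is not a coherent step for a circle-valued map. As written, the proposal is a plausible outline of a strategy, not a proof.
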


\begin{remark}\label{blowupremark}Since the blowup operation has no effect on $b_2^+$, it is sufficient for us to consider the resolutions $\widetilde{M}_\lambda$ instead of $M_\lambda$ in order to show $b_2^+=1$ at some regular level.  In particular, we always have $b_2^+(\widetilde{M}_\lambda)=b_2^+(M_{\lambda})$.
\end{remark}

In this note, we will prove the special case where our $S^1$ actions have isolated fixed points with isotropy weights $(\pm p,\pm q,\mp 1)$, where here we assume that $\gcd(p,q)=1$.

\begin{theorem}\label{maintheorem}
Suppose we have a closed symplectic manifold $(X^6,\omega)$ with a symplectic $S^1$ action with a non-empty set of isolated fixed points, all of whose isotropy weights are either $(\pm p_i,\pm q_i,\mp 1)$, where $p_i>q_i$ and $\gcd(p_i,q_i)=1$, and such that the $S^1$ action has no codimension $2$ isotropy.  Then the $S^1$ action is Hamiltonian. 
\end{theorem}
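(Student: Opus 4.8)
The plan is to argue by contradiction. Suppose the $S^1$ action is not Hamiltonian. As recalled in the introduction, after perturbing $[\omega]$ to be rational we obtain a circle-valued moment map $\Phi\colon X\to S^1$ whose only critical points are the isolated fixed points, all of index $2$ or $4$, with isotropy weights of the form $(\pm p_i,\pm q_i,\mp 1)$. For each regular value $\lambda$ we form the reduced orbifold $M_\lambda$ and its resolution $\widetilde{M}_\lambda$. By Theorem \ref{b+=1impliesdone} together with Remark \ref{blowupremark}, it suffices to produce a single regular value $\lambda$ with $b_2^+(\widetilde{M}_\lambda)=1$: this forces the action to be Hamiltonian, contradicting our assumption. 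Since $M_\lambda$ is a symplectic orbifold we always have $b_2^+(\widetilde{M}_\lambda)\ge 1$, so the entire content is to rule out $b_2^+\ge 2$.

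First I would carry out the local wall-crossing analysis. Using the equivariant normal form at a fixed point $p$ with critical value $\lambda_0$, I would compare $M_{\lambda_0-\epsilon}$ and $M_{\lambda_0+\epsilon}$. When $p$ has index $2$, with positive weights $a,b$ and negative weight $-c$, a weighted projective line $\bP(\bC^2_{a,b})$ of orbifold self-intersection $-c/(ab)$ is born, so the crossing is a weighted blow-up; when $p$ has index $4$ the dual picture collapses such a sphere, so the crossing is a weighted blow-down. The orbifold points of $M_\lambda$ come from the $\bZ_{p_i}$ and $\bZ_{q_i}$ isotropy along the weight-$p_i$ and weight-$q_i$ directions, and the hypothesis of no codimension-$2$ isotropy guarantees these are isolated; each is resolved by a Hirzebruch--Jung chain of spheres of self-intersection $\le -2$. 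Since blow-ups, blow-downs, and resolutions of cyclic quotient singularities only add classes of negative self-intersection, none of these operations changes $b_2^+$. Consequently $b_2^+(\widetilde{M}_\lambda)$ is independent of the regular value $\lambda$, and I am free to compute it at whichever level is most convenient.

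The heart of the argument is to exhibit, in some $\widetilde{M}_\lambda$, an embedded symplectic sphere of non-negative self-intersection. Here the hypothesis $p_i>q_i$ enters decisively. At an index-$2$ fixed point whose weight-$1$ direction is one of the two positive directions, say with weights $\{1,p_i,-q_i\}$, the born sphere is $\bP(\bC^2_{1,p_i})$, whose orbifold self-intersection is $-q_i/p_i$, strictly greater than $-1$ precisely because $p_i>q_i$. This sphere carries a single cyclic quotient singularity of order $p_i$; I would resolve it along its Hirzebruch--Jung string and compute the self-intersection of the proper transform, the claim being that the bound $-q_i/p_i>-1$ upgrades to an honest symplectic sphere of self-intersection $\ge 0$ in $\widetilde{M}_\lambda$. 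With such a sphere in hand, I would invoke McDuff's classification of rational and ruled symplectic $4$-manifolds: a closed symplectic $4$-manifold containing an embedded symplectic sphere of non-negative self-intersection is rational or ruled, and every such manifold has $b_2^+=1$. This gives $b_2^+(\widetilde{M}_\lambda)=1$ and completes the contradiction.

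I expect the main obstacle to be precisely this production of a non-negative sphere. Two points require genuine care. First, one must verify that a fixed point with a favorable sign pattern actually occurs: traversing $S^1$ and using that the net number of blow-ups equals the net number of blow-downs, together with the constraints $\gcd(p_i,q_i)=1$ and $p_i>q_i$, one must guarantee that some born sphere (or, dually, some collapsed sphere) has orbifold self-intersection in $(-1,0)$ and hence resolves favorably. Second, one must control the Hirzebruch--Jung resolution carefully enough to pin down the self-intersection of the proper transform, and check that the J-holomorphic curve techniques underlying McDuff's theorem apply on the resolution $\widetilde{M}_\lambda$ rather than on the singular orbifold $M_\lambda$; this is exactly where Remark \ref{blowupremark} and the invariance of $b_2^+$ under resolution are essential.
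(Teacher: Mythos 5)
Your setup (contradiction, circle-valued moment map, wall-crossing by weighted blow-ups/blow-downs, passing to the resolution $\widetilde{M}_\lambda$, and reducing everything to producing one regular level with $b_2^+=1$) matches the paper. But the heart of your argument contains a step that fails, and it is exactly the step you flagged as the main obstacle. First, the weight configuration $\{1,p_i,-q_i\}$ you want to exploit does not occur under the hypotheses: the theorem assumes the weights are $(\pm p_i,\pm q_i,\mp 1)$, so the lone weight of opposite sign is always $\mp 1$, and the sphere born at an index-$2$ crossing is $\bP(p_i,q_i)$ with orbifold self-intersection $-1/(p_iq_i)$, never $\bP(1,p_i)$ with self-intersection $-q_i/p_i$. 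Second, and more seriously, the claimed ``upgrade'' goes in the wrong direction: resolving the cyclic quotient singularities on an orbifold sphere \emph{decreases} the self-intersection of its proper transform (the correction terms from the Hirzebruch--Jung chain are subtracted, not added), and since the proper transform is a smooth sphere in a smooth $4$-manifold its self-intersection is an integer. An orbifold self-intersection in $(-1,0)$ therefore resolves to a sphere of self-intersection at most $-1$; indeed the paper shows (Definition \ref{weightedexceptionaldivisordefinition}, Remark \ref{weightedblowupremarkfulton}) that the proper transform $\widetilde{E}$ of the weighted exceptional divisor is an ordinary $-1$-sphere. A $-1$-sphere carries no information about $b_2^+$ (every blow-up has one), so McDuff's rational/ruled criterion is never triggered and your contradiction never materializes.

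This is why the paper's proof is necessarily much longer. Rather than producing a non-negative sphere, it constructs a family of tame almost complex structures $\widetilde{J}(\lambda)$ over all of $S^1$ (the ``generalized bundle'' of Section 3), shows that the exceptional class $\widetilde{E}_1^+$ born at one index-$2$ crossing continues to be represented by an embedded $\widetilde{J}(\lambda)$-holomorphic sphere for every $\lambda$ --- the delicate points being that this sphere avoids the point being blown up and the chains $Z_{i,j}$ at each subsequent crossing, which is where Lemma \ref{intersectingweighteddivisorlemma} and the standing assumption $b_2^+>1$ are used --- and observes that its symplectic area strictly increases each time $\lambda$ goes around the circle. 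Iterating produces infinitely many distinct exceptional classes on a fixed $\widetilde{M}_{\lambda_0}$ with $b_2^+>1$, which is impossible. If you want to salvage your outline, you would need to replace the non-negative-sphere step with an argument of this persistence-and-monotonicity type (or find two intersecting exceptional spheres, as in Lemma \ref{b+=1lemma}, which is the form of the $b_2^+=1$ criterion the paper actually uses).
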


\begin{remark}
In \cite{G2}, Godinho's main theorem implies the above result if we add the assumptions that for some $p>q$, all the $p_i=p$, all the $q_i=q$, and $p\neq q+1$.  Additionally, Godinho's proof works for more general $(p,q,-r)$ singularities as well, where again all fixed points are assumed to have the same isotropy weights up to sign.  Thus, our main theorem proves a different special case of the conjecture than Godinho's main theorem does.  
\end{remark}

\subsection{Summary of Main Argument}
We now briefly summarize the main points of the argument.  For definitions and further details, see sections $2$ and $3$.  

Let $\lambda_i$ be a critical value of the moment map with isotropy weights $(\pm p_i,\pm q_i,\mp 1)$.  We will show that at $\lambda_i$, the reduced spaces $M_\lambda$ of the $S^1$ action change by a $(p_i,q_i)$-weighted blowup.  We will further show that if we resolve the corresponding orbifold singularities to form $\widetilde{M}_\lambda$, then this blowup produces two chains $Z^1_i$ and $Z^2_j$ of non-generic curves connected by a curve $\widetilde{E}$ which has $\widetilde{E}^2=-1$ and is an exceptional divisor.  In particular, $\widetilde{E}$ has a non-trivial Gromov invariant, and so this curve persists under perturbations.  We then use holomorphic curve techniques on this curve to demonstrate that the reduced spaces must satisfy $b_2^+=1$, so that by Lemma \ref{b+=1impliesdone}, the action is Hamiltonian.

\begin{remark}
We can easily recover the $6$-dimensional case of \cite{TW} where the action is semifree and has isolated fixed points using the above argument.  Namely, in the semifree case, there are no orbifold points and all the blowups are standard smooth blowups.  Thus, in this case we don't have the curves $Z^i_j$ corresponding to the orbifold singularities and all of the curves that appear in blowups and blowdowns are exceptional divisors, which greatly simplifies the $J$-holomorphic curve arguments.  
\end{remark}

\subsection{Acknowledgments}I would like to thank Dusa McDuff for her help in suggesting the topic, suggesting the main approach, and also helping to refine the arguments in the paper through countless revisions.

\section{Definitions and Technical Lemmas}

In this section, we will build up the tools necessary to prove Theorem \ref{maintheorem}.  We begin by giving a general discussion about orbifolds.

\subsection{Orbifolds}

We first give the definition of an orbifold.  To do this, we first define a local uniformizing chart.

\begin{definition}\label{uniformizingchartdefinition}
Let $M^4$ be a topological space, and let $x\in M$ be a point.  Then a \textbf{$C^{\infty}$ local uniformizing chart} at $x$ is a $4$-tuple $(U,\widetilde{U},\Gamma,\phi)$ where $U$ is a neighborhood of $x$ in $M$, $\widetilde{U}\subset\bR^4$, $\Gamma$ is a finite group acting on $\widetilde{U}$ by diffeomorphisms, and $\phi:\widetilde{U}\rightarrow U$ is a continuous, equivariant map so that $\phi:\widetilde{U}/\Gamma\rightarrow U$ is a homeomorphism.
\end{definition}

Using this, we can now define an orbifold

\begin{definition}\label{orbifolddefinition}
Let $M^4$ be a compact Hausdorff topological space and let $x_i\in M^4$, $i=1,\ldots n$ be points.  Then $M$ is a \textbf{smooth orbifold} if there are $C^{\infty}$ local uniformizing charts $(U_i,\widetilde{U}_i,\Gamma_i,\phi_i)$ at $x_i$ so that $U_i\cap U_j=\emptyset$ if $i\neq j$ and $M^4\setminus\{x_1,\ldots,x_n\}$ is locally Euclidean.  Furthermore, if $(U_i,\widetilde{U}_i,\Gamma_i,\phi_i)$ are such local uniformizing charts, a \textbf{smooth orbifold structure} is given by a finite open cover $\mathcal{C}$ of $M$ by $C^{\infty}$ local uniformizing charts $(U_i,\widetilde{U}_i,\Gamma_i,\phi_i)$ so that if $i>n$, $\Gamma_i$ is the trivial group and so that if $U_i\cap U_j\neq\emptyset$ where $j>n$, then 
\[
\phi_{ij}=\phi_j^{-1}\circ\phi_i:\phi_i^{-1}(U_i\cap U_j)\subset\widetilde{U}_i\longrightarrow\phi_j^{-1}(U_i\cap U_j)\subset\widetilde{U}_j
\]
is a diffeomorphism.  
\end{definition} 

\begin{remark}
One can define differential forms in this context in the usual way by defining them on each local uniformizing chart.  In this fashion, it can be shown that all the usual theory of differential forms, including De Rham cohomology and Poincar\'{e} duality carries over to the smooth orbifold case.  Additionally, one can define a symplectic orbifold in the obvious way.
\end{remark}

This leads to the definition of an orbifold singularity

\begin{definition}\label{orbifoldsingularity}
Let $M^4$ be an orbifold.  A point $x\in M$ will be called an \textbf{orbifold singularity} of \textbf{order $r$} and \textbf{type $(p,q)$} where $\gcd(p,r)=\gcd(q,r)=1$ if there is a local uniformizing chart $(U,\widetilde{U},\bZ_r,\phi)$ near $x$ so that $\bZ_r$ acts on $\widetilde{U}\subset\bR^4=\bC^2$ by 
\[
\xi(z_1,z_2)\mapsto (\xi^p z_1,\xi^q z_2)
\]
where $\xi=e^{\tfrac{2\pi i}{r}}$.  Notice that this action is free away from the origin and has an isolated fixed point at the origin.
\end{definition}

\begin{remark}
The above definitions are much simpler than the general definitions of an orbifold and an orbifold singularity.  By the standard terminology, the above would be considered an isolated orbifold singularity.  In general, it is not necessary to assume that orbifolds are $4$-dimensional or that orbifold singularities are isolated, but in our case, these are the only types of orbifolds we will encounter.
\end{remark}

We now discuss what it means to say that two orbifolds are the same

\begin{definition}\label{orbifolddiffeomorphismdefinition}
Let $M,N$ be smooth orbifolds, where $x_i^M$ are the orbifold points of $M$ and $x_j^N$ are the orbifold points of $N$.  Then if $i,j=1,\ldots n$ and $(U_i^X,\widetilde{U}_i^X,\Gamma_i^X,\phi_i^X)$ is a local uniformizing chart for $x_i^X$ with $X=M,N$, we will say that 
\[
\phi:M\longrightarrow N
\]
is a \textbf{diffeomorphism} if the following conditions are satisfied.
\begin{enumerate}
\item $\phi(x_i^M)=x_i^N$, up to reordering.
\item $\phi:M\setminus\{x_1^M,\ldots,x_n^M\}\rightarrow N\setminus\{x_1^N,\ldots,x_n^N\}$ is a diffeomorphism
\item The local uniformizing charts can be chosen so $\phi(U_i^M)=U_i^N$.  Also, $\phi$ lifts to 
\[
\widetilde{\phi}_i:\widetilde{U}_i^M\longrightarrow\widetilde{U}_i^N
\]
where $\widetilde{\phi}_i$ is an equivariant diffeomorphism.  In other words, there is a group isomorphism $h:\Gamma_i^M\rightarrow\Gamma_i^N$ so that 
\[
\widetilde{\phi}_i(\xi\cdot(z_1,z_2))=h(\xi)\cdot\widetilde{\phi}_i(z_1,z_2)
\]
for $\xi\in\Gamma_i^M$ and $(z_1,z_2)\in\widetilde{U}_i^M$.
\end{enumerate}
\end{definition}

We finish this section by proving a lemma which discusses the extent to which the type of an order $r$ orbifold singularity is preserved under such a diffeomorphism

\begin{theorem}\label{orbifoldtypelemma}
Let $M,M'$ be orbifolds, $\phi:M\rightarrow M'$ a diffeomorphism, and $x,x'$ orbifold singularities of $M,M'$ respectively so $\phi(x)=x'$.  In particular, both $x$ and $x'$ have the same order, which we call $r$.  Then if $x$ is of type $(1,q)$, we must have $x'$ of type $(1,q')$ where $\gcd(q,r)=\gcd(q',r)=1$ and either $q'\equiv\pm q\mod r$ or $qq'\equiv \pm 1\mod r$.  Furthermore, if $\phi$ is orientation preserving, $q'\equiv q\mod r$ or $qq'\equiv 1\mod r$
\end{theorem}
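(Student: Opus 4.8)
The plan is to reduce the statement to a purely linear-algebraic fact about the derivative of the lift $\widetilde{\phi}_i$ at the fixed point, and then to analyze how the conjugating isomorphism $h:\bZ_r\to\bZ_r$ interacts with the weight data. First I would set up notation: since $x$ is of type $(1,q)$, the group $\Gamma=\bZ_r$ acts on $\widetilde{U}\subset\bC^2$ by $\xi\cdot(z_1,z_2)=(\xi z_1,\xi^q z_2)$, and on the target $\Gamma'=\bZ_r$ acts by $\eta\cdot(w_1,w_2)=(\eta w_1,\eta^{q'} w_2)$ where $\eta=e^{2\pi i/r}$ and $q'$ is what we wish to constrain. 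The diffeomorphism lifts to an equivariant diffeomorphism $\widetilde{\phi}$ with $\widetilde{\phi}(\xi\cdot z)=h(\xi)\cdot\widetilde{\phi}(z)$, and by composing with a suitable power we may assume $\widetilde{\phi}(0)=0$. The key reduction is to differentiate at the origin: writing $A=D\widetilde{\phi}_0\in\GL(4,\bR)=\GL_{\bR}(\bC^2)$, equivariance gives the intertwining relation $A\circ\rho(\xi)=\rho'(h(\xi))\circ A$ for all $\xi$, where $\rho,\rho'$ are the two representations. So everything comes down to classifying when two such $\bZ_r$-representations on $\bC^2$ are related by an $\bR$-linear isomorphism intertwining an automorphism $h$ of $\bZ_r$.

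Next I would exploit that $h$ is a group automorphism of $\bZ_r$, hence $h(\xi)=\xi^k$ for some $k$ with $\gcd(k,r)=1$, and analyze the simultaneous eigenspace decomposition. The representation $\rho$ has weights $\{1,q\}$ and $\rho'$ has weights $\{1,q'\}$, but crucially $A$ is only real-linear, so it can be complex-linear or conjugate-linear on each eigenline. A real-linear map intertwining $\rho(\xi)=\mathrm{diag}(\xi^{a},\xi^{b})$ with $\rho'(\xi^k)$ must send each complex character $\xi\mapsto\xi^{a}$ of the source either to a matching complex character of the target (if $A$ is complex-linear there) or to its conjugate $\xi\mapsto\xi^{-a}$ (if $A$ is conjugate-linear). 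Tracking this, the source weights $\{1,q\}$ scaled by $k$ — i.e.\ the multiset $\{\pm k,\pm kq\}$ coming from the complex-linear/conjugate-linear choices — must match the target weights $\{1,q'\}$ as characters, i.e.\ modulo $r$. Matching the weight $1$ forces $k\equiv\pm 1$ or $kq\equiv\pm 1\pmod r$; substituting back into the remaining equation for the other weight then yields the four alternatives $q'\equiv\pm q$ or $qq'\equiv\pm 1\pmod r$. The orientation statement follows by computing the sign of $\det_{\bR}A$ in terms of how many of the two factors are conjugate-linear: each conjugate-linear factor contributes a reversal, and the four cases correspond to $0$, $1$, $1$, $2$ conjugations respectively, so the orientation-preserving hypothesis rules out the two odd cases $q'\equiv -q$ and $qq'\equiv -1$, leaving exactly $q'\equiv q$ or $qq'\equiv 1\pmod r$.

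The main obstacle I anticipate is justifying that the problem genuinely reduces to the linearization $A=D\widetilde{\phi}_0$ and that the free action away from the origin forces the weight characters (not merely the isomorphism class of $A$ as a real representation) to match up. One has to be careful that $\widetilde{\phi}$ is only a diffeomorphism of neighborhoods, not a linear map, so the clean intertwining relation holds only after differentiating; I would argue that since $\widetilde{\phi}$ is equivariant and smooth, its derivative at the fixed origin intertwines the \emph{linearized} actions exactly, and since $\bZ_r$ acts on $\bC^2$ by genuine linear maps these linearizations \emph{are} $\rho,\rho'$ themselves. A second subtlety is that the weights $1$ and $q$ could coincide as characters when $q\equiv 1$, or there could be a coincidence $q\equiv -1$, creating higher-dimensional eigenspaces where $A$ need not respect the individual lines; I would handle these degenerate cases separately, checking directly that the asserted congruences still hold (indeed they become trivial or symmetric). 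Once the linear-algebra classification is in hand, the bookkeeping of the four sign cases and their determinant contributions is routine, so the conceptual weight of the proof sits entirely in the passage from the orbifold diffeomorphism to the intertwining of the two $\bZ_r$-representations.
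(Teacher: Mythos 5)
Your proposal is correct and follows essentially the same route as the paper: both reduce to the derivative of the equivariant lift at the fixed origin and extract the congruences by matching the characters of the two $\bZ_r$-representations, your complex-linear/conjugate-linear decomposition of the real intertwiner being the same bookkeeping as the paper's complexification of the commutative square to $\bC^4$ and comparison of the eigenvalues $\pm\theta,\pm q\theta$ with $\pm k\theta,\pm kq'\theta$. The orientation argument via the parity of conjugate-linear factors likewise matches the paper's count that exactly $4$ of the $8$ possible intertwiners preserve orientation, and your explicit attention to the degenerate cases $q\equiv\pm 1\bmod r$ addresses a point the paper passes over silently.
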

\begin{proof}
As above, if we have such a diffeomorphism $\phi:M\rightarrow M'$ so $\phi(x)=x'$, then there are local uniformizing charts $(U,\widetilde{U},\Gamma,\psi)$ and $(U',\widetilde{U}',\Gamma',\psi')$ at $x,x'$ respectively so that $\phi(U)=U'$ and furthermore, there is an isomorphism $h:\Gamma\rightarrow\Gamma'$ and a lift $\widetilde{\phi}$ of $\phi$ so that
\[
\widetilde{\phi}(\xi\cdot(z_1,z_2))=h(\xi)\cdot\widetilde{\phi}(z_1,z_2)
\]
for $\xi\in\Gamma$ and $(z_1,z_2)\in\widetilde{U}$ where by assumption $\Gamma$ and $\Gamma'$ are copies of $\bZ_r$ acting diagonally with weights $(1,q)$ and $(1,q')$.  

Consider the derivative of $\widetilde{\phi}$ at the origin.  We can identify $T_0\widetilde{U}$ and $T_0\widetilde{U}'$ with copies of $\bC^2$ in the standard way.  Furthermore, the actions $\Gamma$ and $\Gamma'$ on $\widetilde{U}$ and $\widetilde{U}'$ give corresponding actions on $T_0\widetilde{U}$ and $T_0\widetilde{U'}$ under the identification to $\bC^2$.  In particular, we get a real linear diffeomorphism
\[
\psi:=d\widetilde{\phi}:\bC^2\longrightarrow\bC^2
\]
so that
\[
\psi(\xi\cdot(z_1,z_2))=h(\xi)\cdot \psi(z_1,z_2)
\]
for $\xi\in\Gamma$ and $(z_1,z_2)\in\bC^2$.  Additionally, $\psi$ is orientation preserving if and only if $\widetilde{\phi}$ was orientation preserving. Now let $\bC^2=\bR^4$, and let $A$, $A'$ denote the linear symplectomorphisms $\xi\cdot$ and $h(\xi)\cdot$ determined by the actions of $\Gamma$ and $\Gamma'$, where $\xi$ and $h(\xi)$ are generators of $\Gamma$ and $\Gamma'$.  Then $A$ and $A'$ have the matrices
\[
A=\left(\begin{matrix}
R(\theta) & 0 \\ 
0 & R(q\theta)
\end{matrix}\right) \quad
A'=\left(\begin{matrix}
R(k\theta) & 0\\
0 & R(kq'\theta)
\end{matrix}\right),
\]
where $\theta=\tfrac{2\pi}{r}$ and for any real number $\alpha$, $R(\alpha)$ denotes the $2\times 2$ rotation matrix with angle $\alpha$.  Thus, by the above we have a commutative square of real linear maps
\begin{equation*}
\begin{diagram}
\node{\bR^4}\arrow{s,l}{A}\arrow{e,t}{\psi}\node{\bR^4}\arrow{s,r}{A'}\\
\node{\bR^4}\arrow{e,b}{\psi}\node{\bR^4}
\end{diagram}
\end{equation*}
Tensoring with $\bC$ gives a corresponding commutative square of complex linear maps
\begin{equation*}
\begin{diagram}
\node{\bC^4}\arrow{s,l}{A_\bC}\arrow{e,t}{\psi_\bC}\node{\bC^4}\arrow{s,r}{A_\bC'}\\
\node{\bC^4}\arrow{e,b}{\psi_\bC}\node{\bC^4}
\end{diagram}
\end{equation*}

A simple computation then shows that the complex linear transformation $A_\bC$ has eigenvalues $\pm\theta$ and $\pm q\theta$ with corresponding eigenvectors $v^\bC_\lambda$.  Multiplying $v^\bC_\lambda$ by a complex number if necessary, the corresponding real vectors $v_\lambda$ formed by taking the real part of $v^\bC_\lambda$ will form a basis of $\bR^4$.  We denote $v_i$ the basis of $\bR^4$ corresponding to the ordering $v_\theta$, $v_{-\theta}$, $v_{q\theta}$, $v_{-q\theta}$.  Similarly, $A'_\bC$ has eigenvalues $\pm k\theta$ and $\pm kq'\theta$ with corresponding eigenvectors $w^\bC_\lambda$.  As before, taking real parts gives us a corresponding basis of $\bR^4$ denoted $w_i$ corresponding to the ordering $w_{k\theta}$, $w_{-k\theta}$, $w_{kq'\theta}$, and $w_{-kq'_\theta}$.  

Since $\psi_\bC$ is complex linear and fits into our commutative square, $\psi_\bC$ must preserve eigenvectors and eigenvalues.  In particular, $\psi_\bC(v^\bC_\theta)=w^\bC_\lambda$ for some $\lambda=\pm k\theta, \pm kq'\theta$.  In particular, we must have that one of $k\theta$, $-k\theta$, $kq'\theta$, or $-kq'\theta$ equals $\theta$ mod $2\pi$.  However, $r\theta=2\pi$, so if $\theta\equiv\pm k\theta\mod 2\pi$, then $k\equiv\pm 1\mod r$ and thus, we must also have $q\equiv \pm q'\mod r$ from the other eigenvalues.  Correspondingly, if $\theta$ equals $\pm kq'\theta$, then we must have $kq'\equiv\pm 1\mod r$ and we must also have $q\equiv \pm k\mod r$ from the other eigenvalues.  Thus, the only possibilities are $q'\equiv\pm q\mod r$ or $qq'\equiv\pm 1\mod r$, as desired.  Thus, it only remains to show that if $\phi$ is orientation preserving, we have $q'\equiv q\mod r$ or $qq'\equiv 1\mod r$.  

To see this, notice that since we know $\psi_\bC$ preserves eigenvectors and eigenvalues, then up to rescaling there is only a finite number of possibilities for $\psi_\bC$.  Namely, $\theta=\pm k\theta,\pm kq'\theta$ gives $4$ choices, and for each choice, there is a corresponding choice of sign in what happens to the eigenvalues $\pm q\theta$.  Thus, there are $8$ total possibilities for the complex linear map $\psi$.  An easy computation shows that exactly $4$ of the choices for $\psi_\bC$ correspond to an orientation preserving $\psi$ on $\bR^4$, where two of them correspond to $q'\equiv q\mod r$ and the other two correspond to $qq'\equiv 1\mod r$.
\end{proof}

\subsection{Resolutions and Almost Complex Structures}
In this section, we will discuss resolutions of orbifold singularities.  We begin by  giving a nice reinterpretation of a symplectic orbifold in terms of symplectic reduction.

\begin{lemma}\label{randomlemma}Consider the symplectic manifold $\bC^3$ with its standard symplectic structure and consider the standard diagonal circle action with weights $(p,q,-r)$ on $\bC^3$ given by 
\[
\lambda\cdot(z_1,z_2,z_3)=(\lambda^pz_1,\lambda^qz_2,\lambda^{-r}z_3)
\]
where $\lambda=e^{2\pi i\lambda}$.  Notice that this action has a Hamiltonian given by $H=p|z_1|^2+q|z_2|^2-r|z_3|^2$ and define $\overline{\bC}^3_\lambda$ to be $H^{-1}(\lambda)/S^1$.  Then $\overline{\bC}^3_0$ is a symplectic orbifold with an orbifold singularity of order $r$ and type $(p,q)$ at the origin.
\end{lemma}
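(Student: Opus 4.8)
The plan is to realize $\overline{\bC}^3_0$ explicitly as the cyclic quotient $\bC^2/\bZ_r$ carrying the $\bZ_r$-action of weights $(p,q)$, to produce the reduced symplectic form by Marsden--Weinstein reduction, and then to check that this form is genuinely smooth at the cone point. First I would analyze the level set $N:=H^{-1}(0)=\{p|z_1|^2+q|z_2|^2=r|z_3|^2\}$. The origin lies in $N$ and is the unique $S^1$-fixed point of $N$; at every other point of $N$ one has $z_3\neq 0$ and $(z_1,z_2)\neq 0$. Computing stabilizers, a point with $z_3\neq 0$ can only be fixed by $\lambda$ with $\lambda^r=1$, and then $\lambda^p z_1=z_1$, $\lambda^q z_2=z_2$ together with $\gcd(p,r)=\gcd(q,r)=1$ force $\lambda=1$. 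Hence $S^1$ acts freely on $N\setminus\{0\}$, so by Marsden--Weinstein $(N\setminus\{0\})/S^1$ is a smooth symplectic manifold and the only possible singular point of $\overline{\bC}^3_0$ is the image of the origin.

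Next I would identify the quotient via the slice $S=\{(z_1,z_2,z_3)\in N: z_3\in\bR_{\geq 0}\}$. Every $S^1$-orbit in $N$ meets $S$: if $z_3\neq 0$ one rotates its phase to zero, which is possible in exactly $r$ ways, while $z_3=0$ forces the origin. Thus $S$ meets each orbit in a single $\bZ_r$-orbit, where the residual group $\bZ_r=\{\lambda:\lambda^r=1\}$ (the $\lambda$ preserving $z_3\geq 0$) acts by $(z_1,z_2)\mapsto(\lambda^p z_1,\lambda^q z_2)$. Parametrizing $S$ by $(z_1,z_2)\in\bC^2$ through $z_3=\sqrt{(p|z_1|^2+q|z_2|^2)/r}$ yields a homeomorphism $\bC^2/\bZ_r\to\overline{\bC}^3_0$ sending $0$ to the image of the origin. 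Comparing with Definition \ref{orbifoldsingularity}, and using that the $\gcd$ conditions make the $\bZ_r$-action free off the origin, this exhibits $\overline{\bC}^3_0$ as an orbifold whose unique singular point is of order $r$ and type $(p,q)$. As an alternative identification I would complexify the action to $\bC^*$ with weights $(p,q,-r)$ and note that $\tfrac{d}{d\log|t|}H=2p^2|t|^{2p}|z_1|^2+2q^2|t|^{2q}|z_2|^2+2r^2|t|^{-2r}|z_3|^2>0$, so $H$ is strictly monotone along each orbit ray; hence every $\bC^*$-orbit with $(z_1,z_2)\neq 0$ and $z_3\neq 0$ meets $N$ in exactly one $S^1$-orbit, giving the same identification holomorphically as $\bC^2/\bZ_r$.

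It remains to verify that the reduced symplectic form defines an orbifold symplectic structure, that is, that it is smooth across the cone point in the uniformizing chart. Away from the origin this is immediate from Marsden--Weinstein. At the origin the difficulty is that $0$ is a critical value of $H$ and the action is not locally free there, and moreover the slice parametrization above is only continuous, since the square root fails to be smooth at $0$. I expect this to be the main obstacle. The plan is to use the complex quotient description to put the complex structure of $\bC^2/\bZ_r$ on $\overline{\bC}^3_0$ and to identify the reduced form with the associated K\"ahler form: concretely, one exhibits a smooth, strictly plurisubharmonic, $\bZ_r$-invariant K\"ahler potential on the chart $\widetilde U\subset\bC^2$ whose $\tfrac{i}{2}\partial\bar\partial$ agrees with the reduced form off the origin. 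Since such a potential is bounded and smooth across $0$, the reduced form extends smoothly over the cone point; equivalently one can run a removable-singularity argument for the bounded closed reduced form across the isolated singular point. This completes the verification that $\overline{\bC}^3_0$ is a symplectic orbifold of the asserted type.
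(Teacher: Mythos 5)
Your core argument is exactly the paper's: the paper also uses the slice $z_3=\sqrt{\tfrac{p}{r}|z_1|^2+\tfrac{q}{r}|z_2|^2}\geq 0$ to embed $\bC^2$ into $H^{-1}(0)$, observes that two slice points lie on the same $S^1$-orbit only if $\lambda^{-r}=1$, and so identifies $\overline{\bC}^3_0$ with $\bC^2/\bZ_r$ acting with weights $(p,q)$. Your additional verifications (freeness of the action on $N\setminus\{0\}$ and smoothness of the reduced form across the cone point via a K\"ahler potential) are sound and in fact supply details the paper's proof leaves implicit, but they do not change the approach.
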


\begin{proof}
$H^{-1}(0)$ consists of all points $(z_1,z_2,z_3)$ so that $p|z_1|^2+q|z_2|^2-r|z_3|^2=0$.  In particular, $|z_3|^2=\tfrac{p}{r}|z_1|^2+\tfrac{q}{r}|z_2|^2$.  Thus, there is a natural, embedding of $\bC^2$ into $H^{-1}(0)$ given by
\[
(z_1,z_2)\mapsto\left(z_1,z_2,\sqrt{\tfrac{p}{r}|z_1|^2+\tfrac{q}{r}|z_2|^2}\right)
\]
which is smooth away from $(0,0))$.  Furthermore, for any $(z_1,z_2,z_3)$ with $H(z_1,z_2,z_3)=0$, there is a $\lambda\in S^1$ so that $z_3\lambda^{-r}=\sqrt{\tfrac{p}{r}|z_1|^2+\tfrac{q}{r}|z_2|^2}$.  

Thus, we can identify $H^{-1}(0)/S^1$ with the set $(z_1,z_2,\sqrt{\tfrac{p}{r}|z_1|^2+\tfrac{q}{r}|z_2|^2})/\sim$, where 
\begin{eqnarray*}
&\left(z_1,z_2,\sqrt{\tfrac{p}{r}|z_1|^2+\tfrac{q}{r}|z_2|^2}\right)\sim\left(w_1,w_2,\sqrt{\tfrac{p}{r}|z_1|^2+\tfrac{q}{r}|z_2|^2}\right)\Longleftrightarrow\\
&\exists\lambda\in S^1:\left(w_1,w_2,\sqrt{\tfrac{p}{r}|z_1|^2+\tfrac{q}{r}|z_2|^2}\right)=\lambda\cdot\left(z_1,z_2,\sqrt{\tfrac{p}{r}|z_1|^2+\tfrac{q}{r}|z_2|^2}\right).
\end{eqnarray*}
However, this can only occur if $\lambda^{-r}(\tfrac{p}{r}|z_1|^2+\tfrac{q}{r}|z_2|^2)=\tfrac{p}{r}|z_1|^2+\tfrac{q}{r}|z_2|^2$, which in turn implies $\lambda^{-r}=1$, so that $\lambda\in\bZ_r\subset S^1$ generated by $\xi=e^{\tfrac{2\pi i}{r}}$.  

Thus, using our embedding of $\bC^2$ into $H^{-1}(0)$, we can identify $H^{-1}(0)/S^1$ with $\bC^2/\bZ_r$, where $\bZ_r$ acts by $\xi\cdot(z_1,z_2)=(\xi^pz_1,\xi^qz_2)$, as desired.
\end{proof}

We now use this to show that any such isolated orbifold singularity has a local toric structure.

\begin{proposition}\label{orbifoldresolutiontheorem2}
Consider the symplectic manifold $\bC^3$ with its standard symplectic structure and consider the standard diagonal circle action with weights $(p,q,-r)$ on $\bC^3$ given by 
\[
\lambda\cdot(z_1,z_2,z_3)=(\lambda^pz_1,\lambda^qz_2,\lambda^{-r}z_3)
\]
where $\lambda=e^{2\pi i\lambda}$ and let $\overline{\bC}^3_\lambda$ be as before.  Then $\overline{\bC}^3_0$ has a toric structure given by a torus action $\overline{T}$ whose moment polytope is the wedge with conormals $(0,-1)$ and $(-r,q\alpha)$, where $\alpha p+\beta r=1$.  Furthermore, $\overline{\bC}^3_\epsilon$ with $\epsilon>0$ has a toric structure given by $\overline{T}$ whose moment polytope is the wedge with conormals $(0,-1)$, $(-r,q\alpha)$, and $(-p,-q\beta)$ where $\alpha$ and $\beta$ are as before.
\end{proposition}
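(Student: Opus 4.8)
The plan is to recognize $\overline{\bC}^3_\lambda$ as a symplectic reduction of the toric manifold $\bC^3$ by a subcircle, and to read off its toric structure from the standard theory of toric reduction (as in the work of Guillemin or of Lerman and Tolman). Equip $\bC^3$ with the standard Hamiltonian $T^3=(S^1)^3$ action, whose moment map is $\mu(z)=(|z_1|^2,|z_2|^2,|z_3|^2)$ and whose moment polytope is the octant $\bR^3_{\geq 0}$, the three facets $\{x_i=0\}$ carrying inward conormals $e_1,e_2,e_3\in\bZ^3$. The circle in the statement is the subtorus $S^1\subset T^3$ whose Lie algebra is spanned by $\xi=(p,q,-r)$, and its Hamiltonian is exactly $H=\langle\xi,\mu\rangle=p|z_1|^2+q|z_2|^2-r|z_3|^2$ from Lemma \ref{randomlemma}. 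Reducing by this circle leaves the residual torus $\overline{T}=T^3/S^1$, whose cocharacter lattice is $\overline{N}=\bZ^3/\bZ\xi$; since $\gcd(p,q)=1$ the vector $\xi$ is primitive, so $\overline{N}\cong\bZ^2$ and $\overline{T}$ is a genuine $2$-torus acting on the $4$-dimensional reduced space.

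By the toric reduction recipe, the moment polytope of $\overline{\bC}^3_\lambda$ is the slice $P_\lambda=\{x\in\bR^3_{\geq 0}:\langle\xi,x\rangle=\lambda\}$ of the octant, viewed inside $(\mathfrak t/\mathfrak k)^*$ where $\mathfrak t,\mathfrak k$ are the Lie algebras of $T^3,S^1$, and the conormal of each facet of $P_\lambda$ is the image under the projection $\pi:\bZ^3\to\overline{N}$ of the conormal $e_i$ of the octant facet $\{x_i=0\}$ that meets the slice. I would first pin down these slices geometrically. For $\lambda=0$ the relation $px_1+qx_2=rx_3$ lets me solve $x_3=(px_1+qx_2)/r\geq 0$, so $P_0$ is the cone swept out by $(x_1,x_2)\in\bR^2_{\geq 0}$; only $\{x_1=0\}$ and $\{x_2=0\}$ meet it, since $\{x_3=0\}$ meets it only at the apex, giving a wedge with two facets. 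For $\lambda=\epsilon>0$ the condition $x_3=(px_1+qx_2-\epsilon)/r\geq 0$ truncates the tip of this cone, so $P_\epsilon$ is bounded by all three facets $\{x_1=0\}$, $\{x_2=0\}$ and $\{x_3=0\}$, the last being $px_1+qx_2=\epsilon$.

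It then remains to express $\pi(e_1),\pi(e_2),\pi(e_3)$ in a suitable basis of $\overline{N}$. Using $\alpha p+\beta r=1$, available because $\gcd(p,r)=1$ by Definition \ref{orbifoldsingularity} and Lemma \ref{randomlemma}, I would fix the identification $\overline{N}\cong\bZ^2$ given by
\begin{equation*}
B=\begin{pmatrix} -r & 0 & -p\\ q\alpha & -1 & -q\beta\end{pmatrix},
\end{equation*}
and check that it is legitimate: $B\xi=0$, so $B$ descends to $\overline{N}$, and its $2\times 2$ minors are $r,-p,q$, with gcd $1$, so $B$ induces an isomorphism $\overline{N}\xrightarrow{\sim}\bZ^2$. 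Reading off the columns gives $\pi(e_1)=(-r,q\alpha)$, $\pi(e_2)=(0,-1)$ and $\pi(e_3)=(-p,-q\beta)$, the relation $p\,\pi(e_1)+q\,\pi(e_2)-r\,\pi(e_3)=0$ serving as a sanity check that reflects $\pi(\xi)=0$. Hence for $\lambda=0$ the wedge $P_0$ has conormals $(0,-1)$ and $(-r,q\alpha)$, while for $\lambda=\epsilon>0$ the polytope $P_\epsilon$ additionally acquires the conormal $(-p,-q\beta)$, exactly as claimed.

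I expect the main obstacle to be the lattice and orientation bookkeeping rather than any deep geometry: one must choose the identification $\overline{N}\cong\bZ^2$ so that the signs come out precisely as stated, and one must invoke the reduction theorem in a form valid for orbifold quotients, verifying that the quotient is stacky exactly where the $S^1$-stabilizers are nontrivial. As a consistency check on the conventions I would compute the determinants of the conormals meeting each vertex: at the apex of $P_0$ the pair $(0,-1),(-r,q\alpha)$ has determinant $\pm r$, recovering the order-$r$ orbifold point of Lemma \ref{randomlemma}, while for $P_\epsilon$ the two truncation vertices have determinants $\pm p$ and $\pm q$, matching the orbifold orders one expects from a $(p,q)$-weighted partial resolution.
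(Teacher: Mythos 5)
Your proposal is correct, and it is in substance the same computation as the paper's, carried out in the dual (normal-fan) language. The paper also realizes the moment polytope of $\overline{\bC}^3_\epsilon$ as the slice $\{x\in\bR^3_{\geq 0}: px_1+qx_2-rx_3=\epsilon\}$ of the octant, but then works in $\overline{\mathfrak t}^*$: it builds an explicit integral affine embedding $\Phi_\epsilon:\bR^2\to\mathcal{H}^*(\epsilon)$ out of the basis $e_1=(r,0,p)$, $e_2=(-q\alpha,1,q\beta)$ (again using $\alpha p+\beta r=1$), checks that the edges of the standard wedge map onto the edges of the slice, and concludes that $\Phi_\epsilon$ is an element of $\GL(2,\bZ)$ plus a translation. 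You instead work in $\overline{\mathfrak t}$, projecting the facet conormals of the octant to $\overline N=\bZ^3/\bZ\xi$ via your matrix $B$; the checks that $B\xi=0$ and that the $2\times 2$ minors of $B$ are $r$, $q$, $-p$ (hence coprime, since $\gcd(p,q)=1$) are exactly what is needed for $B$ to induce an isomorphism $\overline N\cong\bZ^2$, and the columns of $B$ then give all three stated conormals in one stroke. The two routes buy slightly different things: the paper's edge-direction bookkeeping makes it explicit that the same torus $\overline T$ acts at level $0$ and at level $\epsilon$, a point it leans on later, while your version is shorter, and your vertex-determinant sanity check ($\pm r$ at the apex of $P_0$, $\pm p$ and $\pm q$ at the truncation vertices of $P_\epsilon$) is a genuinely useful cross-check against Lemma \ref{randomlemma} and Remark \ref{weightedblowupremarkfulton} that the paper does not record. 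Both arguments take for granted the standard fact that reducing a toric manifold by a subtorus yields the sliced polytope, so no gap is introduced on your side that is not already present in the paper.
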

\begin{proof}
$\overline{\bC}^3_\epsilon$ for all $\epsilon\geq 0$ inherits a torus action $\overline{T}$ by taking the standard torus action $T$ on $\bC^3$ and quotienting by the diagonal $S^1$ action with weights $(p,q,-r)$.  The moment polytope of this toric structure on $\overline{\bC}^3_{\epsilon}$ has an embedding into $\mathfrak{t}^*$ by taking the moment polytope of the standard action, $\bR^3_{\geq 0}\subset \mathfrak{t}^*\cong \bR^3$ and restricting to the plane $px+qy-rz=\epsilon$.  We will call this plane $\mathcal{H}^*(\epsilon)$, and we will denote its integer lattice in $\mathfrak{t}^*$ by $\mathcal{H}^*_{\bZ}(\epsilon)$.  This polytope is the piece of $\mathcal{H}^*(\epsilon)$ which has $x,y,z\geq 0$.  

If $\epsilon=0$, we have $\mathcal{H}^*(0)$ is the plane $px+qy-rz=0$.  If $z=0$, we have $px+qy=0$ which means $x=y=0$.  If $y=0$, we have $px=rz$ which gives the ray starting at $(0,0,0)$ with direction $(r,0,p)$.  If $x=0$, we have $qy=rz$ which gives the ray starting at $(0,0,0)$ with direction $(0,r,q)$.  Our polytope is then clearly given by the wedge between $(r,0,p)$ and $(0,r,q)$.

If $\epsilon>0$, we have $px+qy=rz+\epsilon$.  If $z=0$, we have $px+qy=\epsilon$ which gives the line segment in the direction $(-q,p,0)$ between $(\tfrac{\epsilon}{p},0,0)$ and $(0,\tfrac{\epsilon}{q},0)$.  If $y=0$, we have $px=rz+\epsilon$ which gives the ray in the direction $(r,0,p)$ starting at $(\tfrac{\epsilon}{p},0,0)$.  If $x=0$, we have $qy=rz+\epsilon$ which gives the ray in the direction $(0,r,q)$ starting at $(0,\tfrac{\epsilon}{q},0)$.  We denote this section of $\mathcal{H}^*(\epsilon)$ by $\Delta(\epsilon)$.

Furthermore, the moment polytope of the action of $\overline{T}$ also has an embedding into $\overline{\mathfrak{t}}^*\cong\bR^2$.  We similarly denote by $\overline{\mathfrak{t}}^*_{\bZ}$ the integer lattice of this algebra.

We seek to produce an embedding of $\overline{\mathfrak{t}}^*$ into $\mathcal{H}^*(0)$ so that the wedge between $(1,0)$ and $(r,q\alpha)$ maps to the wedge between $(r,0,p)$ and $(0,r,q)$ and furthermore so that the induced map from $\overline{\mathfrak{t}}^*_{\bZ}$ to $\mathcal{H}_{\bZ}^*(0)$ is an element of $\GL(2,\bZ)$ plus a translation.  Similarly, we want an embedding of $\overline{\mathfrak{t}}^*$ into $\mathcal{H}^*(\epsilon)$ so that the wedge between $(1,0)$ and $(r,q\alpha)$ cut by the direction $(q\beta,-p)$ maps to $\Delta(\epsilon)$ and so that the induced map from $\overline{\mathfrak{t}}^*_{\bZ}$ to $\mathcal{H}_{\bZ}^*(\epsilon)$ is an element of $\GL(2,\bZ)$ plus a translation.

We claim that producing such embeddings would complete the proof.  Indeed, the torus $\overline{T}$ is determined both as $\overline{\mathfrak{t}}/\overline{\mathfrak{t}}_{\bZ}$ and as $\mathcal{H}(\epsilon)/\mathcal{H}_{\bZ}(\epsilon)$. Thus, dualizing the embedding would give an embedding from $\mathcal{H}(\epsilon)$ into $\overline{\mathfrak{t}}$ so that $\mathcal{H}_{\bZ}(\epsilon)$ maps by an element of $\GL(2,\bZ)$ plus a translation onto $\overline{\mathfrak{t}}_{\bZ}$.  In particular, this shows that the same torus action $\overline{T}$ is inducing these two moment polytopes, which then gives the desired result.

To produce such an embedding, we will complete $(p,q,-r)$ to an integer basis.  Since $\gcd(p,r)=1$, there exist integers $\alpha$ and $\beta$ so that $\alpha p+\beta r=1$.  Then we have
\[
\det\left(\begin{matrix}
p & q & -r \\ 
0 & 1 & 0\\ 
\beta & 0 & \alpha
\end{matrix}\right)=\alpha p+\beta r=1
\]
In particular, $(p,q,-r)$, $(0,1,0)$, and $(\beta,0,\alpha)$ is an integer basis of $\bZ^3$.  Using this, we can give a basis of $\mathcal{H}(0)$ by giving vectors $e_1$ and $e_2$ so that $e_1\cdot(0,1,0)=e_1\cdot(p,q,-r)=0$ and $e_2\cdot(p,q,-r)=e_2\cdot(\beta,0,\alpha)=0$.  We choose $e_1=(r,0,p)$ and $e_2=(-q\alpha,1,q\beta)$.  Using this basis, we define a linear embedding $\Phi_0$ from $\bR^2$ to $\mathcal{H}^*(0)$ as follows:
\[
\Phi_0(a,b)=ae_1+be_2=(ar-bq\alpha,b,ap+bq\beta)
\]
By construction, $\Phi_0$ is an element of $\GL(2,\bZ)$.  Now notice that $\Phi_0(1,0)=(r,0,p)$, while 
\[
\Phi_0(q\alpha,r)=(rq\alpha-rq\alpha,r,pq\alpha+rq\beta)=(0,r,q(p\alpha+r\beta))=(0,r,q)
\]
Therefore, since $\Phi_0$ is linear, the wedge between $(1,0)$ and $(q\alpha,r)$ maps to the wedge between $(r,0,p)$ and $(0,r,q)$.  Thus, $M$ has a local toric structure given by the torus action $\overline{T}$ whose moment polytope is given by the wedge in $\bR^2$ with conormals $(0,-1)$ and $(-r,q\alpha)$, as desired.  

Also, notice that $\mathcal{H}^*(\epsilon)$ can be formed from $\mathcal{H}^*(0)$ by the translation $\tau_\epsilon(x,y,z)=(x+\tfrac{\epsilon}{p},y,z)$. Thus, we can form an affine embedding $\Phi_\epsilon$ from $\bR^2$ to $\mathcal{H}^*(\epsilon)$ as $\tau_\epsilon\circ\Phi_0$ to get:
\[
\Phi_\epsilon(a,b)=ae_1+be_2+(\tfrac{\epsilon}{p},0,0)=(ar-bq\alpha+\tfrac{\epsilon}{p},b,ap+bq\beta)
\]
By construction, $\Phi_\epsilon$ is an element of $\GL(2,\bZ)$ plus a translation.  Also, as defined, we have 
\[
\Phi_\epsilon((a,b)+(c,d))=\Phi_\epsilon(a,b)+\Phi_0(c,d)
\]
Furthermore, $\Phi_\epsilon(0,0)=(\tfrac{\epsilon}{p},0,0)$ and 
\begin{align*}
\Phi_\epsilon(-\tfrac{\epsilon\beta}{p},\tfrac{\epsilon}{q})&=(-\tfrac{\epsilon}{p}\beta r-\epsilon\alpha-\tfrac{\epsilon}{p},\tfrac{\epsilon}{q},-\epsilon\beta+\epsilon\beta)\\
&=(-\tfrac{\epsilon}{p}(\beta r+\alpha p)+\tfrac{\epsilon}{p},\tfrac{\epsilon}{q},0)\\
&=(-\tfrac{\epsilon}{p}+\tfrac{\epsilon}{p},\tfrac{\epsilon}{q},0)=(0,\tfrac{\epsilon}{q},0)
\end{align*}
Lastly, we see that
\[
\Phi_0(-q\beta,p)=(-q\beta r-pq\alpha,p,-q\beta p+pq\beta)=(-q(p\alpha+r\beta),p,0)=(-q,p,0)
\]
Combining all this, we clearly see that the polytope with conormals $(0,-1)$, $(-p,-q\beta)$ and $(-r,q\alpha)$ maps to $\Delta(\epsilon)$, as desired.

\end{proof}

We can use the above lemma to give a local toric structure to one of our orbifold singularities
\begin{corollary}\label{orbifoldresolutiontheorem}
Let $M^4$ be a symplectic orbifold with orbifold singularity $x$ of order $r$ and type $(p,q)$.  Then a neighborhood of $x$ has a toric structure with moment polytope determined by the conormals $(0,1)$ and $(r,-k)$ where $p\alpha+r\beta=1$, $k\equiv q\alpha\mod r$ and $1\leq k<r$.
\end{corollary}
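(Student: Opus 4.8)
The plan is to reduce the statement directly to the explicit local models already constructed in this section, so that the corollary becomes a purely combinatorial normalization of the polytope produced in Proposition \ref{orbifoldresolutiontheorem2}. By Definition \ref{orbifoldsingularity}, the singularity $x$ comes equipped with a local uniformizing chart $(U,\widetilde{U},\bZ_r,\phi)$ on which $\bZ_r$ acts on $\widetilde{U}\subset\bC^2$ by $\xi\cdot(z_1,z_2)=(\xi^pz_1,\xi^qz_2)$ with $\xi=e^{2\pi i/r}$. My first step is to observe that this is exactly the quotient appearing in Lemma \ref{randomlemma}: that lemma identifies $\overline{\bC}^3_0=H^{-1}(0)/S^1$ with $\bC^2/\bZ_r$ for precisely this $(p,q)$-action. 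Hence a neighborhood of $x$ in $M$ is isomorphic, as a symplectic orbifold, to a neighborhood of the origin in $\overline{\bC}^3_0$, and it therefore inherits any toric structure that $\overline{\bC}^3_0$ carries near the origin.

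Next I would invoke Proposition \ref{orbifoldresolutiontheorem2}, which equips $\overline{\bC}^3_0$ with a torus action $\overline{T}$ whose moment polytope is the wedge with conormals $(0,-1)$ and $(-r,q\alpha)$, where $\alpha p+\beta r=1$. Transporting this structure across the isomorphism from the first step gives a toric structure on a neighborhood of $x$ with the same pair of conormals, so it only remains to rewrite these conormals in the normalized form asserted in the statement. This is a two-part bookkeeping argument. First, a direct check shows that $(0,-1)$ and $(-r,q\alpha)$ are the \emph{outward} normals to the two facets of the wedge generated by $(1,0)$ and $(q\alpha,r)$; negating both of them (and reversing the defining inequalities) describes the identical wedge by its inward normals $(0,1)$ and $(r,-q\alpha)$, which is the convention used in the statement.

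Second, the moment polytope of a toric orbifold is only canonical up to the action of $\mathrm{AGL}(2,\bZ)$, arising from the freedom in identifying $\overline{T}$ with the standard torus. Applying the shear $\left(\begin{smallmatrix}1 & 0\\ s & 1\end{smallmatrix}\right)\in\mathrm{SL}(2,\bZ)$ to the conormals fixes $(0,1)$ and sends $(r,-q\alpha)$ to $(r,-(q\alpha-sr))$; choosing $s$ so that $k:=q\alpha-sr$ lies in $[1,r)$ yields the conormals $(0,1)$ and $(r,-k)$ with $k\equiv q\alpha\mod r$, as desired. Finally, since $p\alpha+\beta r=1$ forces $p\alpha\equiv 1\mod r$, so that $\alpha$ is a unit mod $r$ and $\gcd(\alpha,r)=1$, and since $\gcd(q,r)=1$ by hypothesis, we get $\gcd(k,r)=\gcd(q\alpha,r)=1$; in particular $q\alpha\not\equiv 0\mod r$, so the chosen representative $k$ is genuinely in $\{1,\dots,r-1\}$.

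I expect the only genuinely delicate point to be the first step, namely the claim that the abstract symplectic orbifold singularity is modeled near $x$ on the linear quotient $\bC^2/\bZ_r$ of Lemma \ref{randomlemma}; in full generality this is an equivariant Darboux-type statement for orbifolds. However, because Definition \ref{orbifoldsingularity} of a type $(p,q)$ singularity already supplies a uniformizing chart with exactly this $\bZ_r$-action, the identification is essentially built into the hypotheses, and the substantive content of the corollary is just the combinatorial translation of Proposition \ref{orbifoldresolutiontheorem2}'s polytope into normal form carried out above.
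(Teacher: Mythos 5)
Your proposal is correct and follows essentially the same route as the paper: identify a neighborhood of $x$ with $\overline{\bC}^3_0$ via Lemma \ref{randomlemma}, invoke Proposition \ref{orbifoldresolutiontheorem2} to get the conormals $(0,-1)$ and $(-r,q\alpha)$, and then normalize by an integral affine transformation. The only cosmetic difference is that the paper performs the normalization with the single matrix $\left(\begin{smallmatrix}-1 & 0\\ c & -1\end{smallmatrix}\right)$ where you factor it as a sign flip followed by a shear, and you add the (correct, implicit in the paper) check that $\gcd(q\alpha,r)=1$ so that $k\neq 0$.
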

\begin{proof}
As in Lemma \ref{randomlemma}, a neighborhood of such an orbifold singularity $x$ can be obtained as the reduced space $\overline{\bC}^3_0$ at level $0$ of the diagonal $S^1$ action with weights $(p,q,-r)$ on $\bC^3$.  Furthermore, Lemma \ref{orbifoldresolutiontheorem2} says that this has a toric structure with moment polytope determined by the conormals $(0,-1)$ and $(-r,q\alpha)$.  Consider the following transformation
\[
A=\left(\begin{matrix}
-1 & 0 \\ 
c & -1
\end{matrix}\right) 
\]
Then $A\cdot(0,-1)=(0,1)$, and $A\cdot(-r,q\alpha)=(r,-q\alpha-rc)$.  There is a unique choice of $c$ so this equals $(r,-k)$ where $k\equiv q\alpha\mod r$ and $1\leq k<r$.  This completes the proof.
\end{proof}
\begin{remark}\label{orbifoldresolution}
We can use the above theorem and the techniques of Fulton in \cite{F} to resolve these singularities as follows.  In \cite{F}, Fulton shows that a resolution of the polytope with outer conormals $(0,1)$ and $(m,-k)$ with $0<k<m$ is given by a string of integers $a_i$ so that
\[
\tfrac{m}{k}=a_1-\tfrac{1}{a_2-\tfrac{1}{\ldots-\tfrac{1}{a_n}}},
\]
Then there is a resolution of this singularity by a series of blowups which produces a chain of classes $Z_i$ so that
\[
Z_i\cdot Z_j=\left\{
\begin{array}{ll}
-a_i \text{ if }i=j\\
1 \text{ if }|i-j|=1\\
0 \text{ else}
\end{array}\right.
\]
Furthermore, Fulton shows there is a unique choice of the $a_i$ so that $a_i\geq 2$ for all $i$.  Hence, using the above theorem, we can apply these techniques with $m=r$ and $k\equiv q\alpha \mod r$ with $0<k<r$ as above to get a resolution of any isolated orbifold singularity of order $r$ and type $(p,q)$.
\end{remark}

We can use the above to give the following definition.

\begin{definition}\label{orbifoldresolutiondefinition}
Let $M^4$ be an orbifold.  Then $M$ has a finite set of isolated orbifold singularities, $p_1,\ldots, p_n$.  As in Remark \ref{orbifoldresolution} above, we can get a symplectic manifold $\widetilde{M}$, called the  \textbf{resolution} of $M$ by resolving each of these singularities separately.
\end{definition}

\begin{remark}\label{sameresolutionremark}
Using the above techniques, it is easy to see when two isolated singularities $x,x'$ of orders $r$ and types $(p,q)$ and $(p',q')$ respectively have the same resolution.  Namely, if $x$ is resolved as above by the integer string $a_i$, $i=1,\ldots,n$, then $x'$ will have the same resolution only if $x'$ is resolved by the same string $a_i$, or by the reversed string $\overline{a}_i$, where $\overline{a}_i=a_{n+1-i}$, $i=1,\ldots,n$.  However, a simple induction shows that if
\[
\tfrac{m}{k}=a_1-\tfrac{1}{a_2-\tfrac{1}{\ldots-\tfrac{1}{a_n}}}
\]
then 
\[
a_n-\tfrac{1}{a_{n-1}-\tfrac{1}{\ldots-\tfrac{1}{a_1}}}=\tfrac{m}{k'}
\]
where $kk'\equiv 1\mod m$.  Thus, $x$ and $x'$ will have the same resolution if and only if either $q\alpha\equiv q'\alpha'\mod r$ or $(q\alpha)(q'\alpha')\equiv 1\mod r$, where $\alpha p+\beta r=1$ and $\alpha'p'+\beta'r'=1$.  In particular, if $p=p'=1$, so that $\alpha=\alpha'=1$, and $1\leq q,q'<r$ then we must have $q=q'$ or $qq'\equiv 1\mod r$
\end{remark}

Combining the above remark with Lemma \ref{orbifoldtypelemma} gives us the following useful lemma

\begin{lemma}\label{diffeomorphismliftinglemma}
Let $(X,\omega)$ be a closed $6$-dimensional manifold with an effective, symplectic $S^1$ action with no codimension $2$ isotropy, and consider the family $M_{\lambda}$ of reduced spaces of this action.  Let $\phi$ be any orientation preserving diffeomorphism
\[
\phi:M_\lambda\longrightarrow M_{\lambda'}.
\]
Then $\phi$ lifts to a diffeomorphism
\[
\widetilde{\phi}:\widetilde{M}_\lambda\longrightarrow\widetilde{M}_{\lambda'}
\]
\end{lemma}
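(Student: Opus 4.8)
The plan is to construct $\widetilde\phi$ in two regions: away from the orbifold points it will just be $\phi$ itself, and over each orbifold point it will be a diffeomorphism of the exceptional chains whose existence follows once I know that $\phi$ matches up orbifold singularities \emph{with the same resolution}. So the whole argument reduces to comparing, via $\phi$, the resolution data of corresponding orbifold points, and this is exactly what Theorem \ref{orbifoldtypelemma} and Remark \ref{sameresolutionremark} are designed to control.

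First I would record the consequences of the hypotheses. Since the action has no codimension $2$ isotropy, every nontrivial finite isotropy group fixes a symplectic submanifold of dimension at most $2$; such a surface is $S^1$-invariant but not pointwise fixed (the fixed points of the action are isolated), so it meets a regular level $\Phi^{-1}(\lambda)$ in circles and reduces to isolated points. Hence each $M_\lambda$ has only isolated orbifold singularities and the resolution $\widetilde M_\lambda$ is defined as in Definition \ref{orbifoldresolutiondefinition}. At an orbifold point $x$ of order $r$ and type $(p,q)$ one has $\gcd(p,r)=\gcd(q,r)=1$, so $p$ is invertible modulo $r$; replacing the generator $\xi$ of $\bZ_r$ by $\xi^{p^{-1}}$ changes the weights $(p,q)$ to $(1,\,qp^{-1}\bmod r)$ without changing the quotient, so the same singularity is of type $(1,q)$ for a suitable $q$. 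I may therefore assume every orbifold point of $M_\lambda$ and of $M_{\lambda'}$ is presented in type $(1,q)$ form, which is precisely the hypothesis needed to apply Theorem \ref{orbifoldtypelemma}.

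Next I would compare the singularities across $\phi$. By Definition \ref{orbifolddiffeomorphismdefinition}, $\phi$ carries the orbifold points of $M_\lambda$ bijectively onto those of $M_{\lambda'}$, preserving their orders, and near corresponding points $x\mapsto x'$ it lifts to an equivariant diffeomorphism of uniformizing charts which is orientation preserving because $\phi$ is. Writing $x$ as type $(1,q)$ and $x'$ as type $(1,q')$ of common order $r$, the orientation-preserving case of Theorem \ref{orbifoldtypelemma} forces $q'\equiv q\bmod r$ or $qq'\equiv 1\bmod r$. By the $p=p'=1$ case of Remark \ref{sameresolutionremark}, this is exactly the condition that $x$ and $x'$ have the same resolution, that is, the same Hirzebruch--Jung string $a_1,\dots,a_n$ up to reversal. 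Now I assemble the map: $\widetilde M_\lambda$ agrees with $M_\lambda$ outside arbitrarily small neighborhoods of the $x_i$, where it is obtained by gluing in the linear plumbing $W_i$ of spheres realizing $a_i$ (Remark \ref{orbifoldresolution}), and $\widetilde M_{\lambda'}$ is built from the pieces $W_i'$. On the complement of these exceptional neighborhoods $\phi$ is an honest diffeomorphism, so I set $\widetilde\phi=\phi$ there; over each $x_i$, since $W_i$ and $W_i'$ resolve singularities with the same string up to reversal, there is a diffeomorphism $\Psi_i\colon W_i\to W_i'$ commuting with the resolution projections, and gluing the $\Psi_i$ to $\phi$ yields $\widetilde\phi\colon\widetilde M_\lambda\to\widetilde M_{\lambda'}$.

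The main obstacle is the last compatibility: arranging $\Psi_i$ to agree with $\phi$ on the collar where the resolution and the punctured chart overlap, i.e.\ extending the equivariant diffeomorphism of punctured charts across the exceptional divisor. The key point is that the Hirzebruch--Jung resolution is canonically determined by the quotient singularity, so an equivariant chart diffeomorphism inducing $\phi$ lifts to the resolutions after an isotopy supported near the boundary; the two cases $q'\equiv q$ and $qq'\equiv 1$ correspond to a $\Psi_i$ that respectively preserves or reverses the order of the exceptional chain. This is also where the orientation-preserving hypothesis is essential: an orientation-reversing $\phi$ could produce the excluded relations $q'\equiv-q$ or $qq'\equiv-1$, for which the continued-fraction strings differ and no such $\Psi_i$ exists.
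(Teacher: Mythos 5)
Your proposal is correct and follows exactly the route the paper intends: its entire proof of this lemma is the single sentence that it is ``an immediate consequence of Lemma \ref{orbifoldtypelemma} and Remark \ref{sameresolutionremark},'' and you have simply filled in the details --- reducing to type $(1,q)$, invoking the orientation-preserving case of Theorem \ref{orbifoldtypelemma} to get $q'\equiv q$ or $qq'\equiv 1 \bmod r$, matching this against the $p=p'=1$ case of Remark \ref{sameresolutionremark}, and gluing over the exceptional chains. If anything, your write-up is more careful than the paper's, since you explicitly address the collar-compatibility of the gluing, which the paper leaves unstated.
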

\begin{proof}
The proof is an immediate consequence of Lemma \ref{orbifoldtypelemma} and Remark \ref{sameresolutionremark}
\end{proof}

In the above discussion, we showed that given a symplectic orbifold $(M^4,\omega)$ with a finite number of orbifold singularities, there is a corresponding symplectic manifold $(\widetilde{M}^4,\widetilde{\omega})$ which is obtained from $M$ by successive blowups near the singularities.  Moreover, this implies that in $\widetilde{M}$, there are some homology classes with self intersection $\leq -2$ which are represented by symplectically embedded spheres.  We finish this section by discussing which almost complex structures on $\widetilde{M}$ can be blown down to almost complex structures on $M$.  This discussion is largely based on \cite{D2}

More specifically, if $M^4$ has the singularities $p_1,\ldots,p_n$, there are classes $Z_{i,j}$ in $\widetilde{M}$ which are all represented by symplectically embedded spheres $C_{i,j}$ and which satisfy
\begin{equation*}
Z_{i,j}\cdot Z_{k,l}=\left\{
\begin{array}{ll}
-a_{i,j}\leq -2 &\text{if }i=k,~j=l\\
1 &\text{if } i=k,~|j-l|=1\\
0 &\text{else}
\end{array}\right.
\end{equation*} 
Moreover, near a singularity $p_i$, we can blow up any almost complex structure $J$ which is integrable get an almost complex structure $\widetilde{J}$ defined in a neighborhood of $\cup_jC_{i,j}$ which by definition can be blown down to $J$ in a neighborhood of $p_i$.  

With the above in mind, we can give the following definition.

\begin{definition}\label{almostcomplexstructuredefinition}
Let $(\widetilde{M}^4,\widetilde{\omega})$ be the resolution of a symplectic orbifold with singularities $p_i$ and a corresponding set $\mathcal{Z}=\{Z_1,\ldots,Z_n\}$ of homology classes that are represented by symplectically embedded spheres satisfying 
\begin{equation*}
Z_i\cdot Z_j-\left\{
\begin{array}{ll}
-k_i &\text{if }i=j\\
0\text{ or }1 &\text{if }|i-j|=1\\
0 &\text{else}
\end{array}\right.
\end{equation*}
Then we define $\widetilde{\mathcal{J}}(\mathcal{Z}):=\mathcal{J}(\mathcal{Z},\widetilde{\omega})$ to be the space of all $\widetilde{\omega}$-tame almost complex structures which arise as the blowup of an almost complex structure $J$ which is integrable near each $p_i$.
\end{definition}

\begin{remark}\label{almostcomplexstructureremark}
The set $\widetilde{\mathcal{J}}(\mathcal{Z})$ is defined to be isomorphic to the set $\mathcal{J}(p_1,\ldots,p_n;\omega)$ of $\omega$-tame almost complex structures on $M$ which are integrable near $p_i$.  Namely, each $\widetilde{J}\in\widetilde{\mathcal{J}}(\mathcal{Z})$ corresponds to a unique almost complex structure $J\in\mathcal{J}(p_1,\ldots,p_n;\omega)$ in the sense that $J$ blows up to $\widetilde{J}$.
\end{remark}

\subsection{Weighted Blowups and Blowdowns}

We will next discuss ellipsoid blowups.  We will let $E(q,p)=\{(z_1,z_2)|\tfrac{|z_1|^2}{q}+\tfrac{|z_2|^2}{p}\leq 1\}$, where $p>q$ and $\gcd(p,q)=1$.
\begin{definition}\label{weightedblowupdefinition}
Let $(M^4,\omega)$ be a symplectic manifold, and let $x$ be a point, and let $p,q$ be integers with $\gcd(p,q)=1$.  Then the \textbf{$(p,q)$ weighted blowup of size $\epsilon$ at $x$}, denoted $(\widetilde{M},\widetilde{\omega})$ is given by removing 
\[
\Int(\tfrac{\epsilon}{pq}E(q,p))=\{(z_1,z_2):\tfrac{|z_1|^2}{q}+\tfrac{|z_2|^2}{p}=\tfrac{\epsilon}{pq}\}=\{(z_1,z_2)|p|z_1|^2+q|z_2|^2\leq\epsilon\}
\] 
and collapsing the resulting ellipsoid boundary along the characteristic flow to produce a curve $C^E$ in the class $E$, called the \textbf{$(p,q)$-weighted exceptional divisor}.  The form $\widetilde{\omega}$ can be chosen to be $\omega$ outside of $\tfrac{\epsilon}{pq} E(q,p)$ and to satisfy
\[
\int_{C^E}\widetilde{\omega}=\epsilon
\]
\end{definition}

In general, this procedure will not result in a symplectic manifold, but rather in a symplectic orbifold which has two singularities, one of which has order $p$, and the other of which has order $q$.  Furthermore, the $(p,q)$ weighted exceptional curve $E$ will intersect both of these singularities.  To see this, we can look at $\Int(E(q,p))$ in the toric picture.  Under the standard torus action of $\bC^2$, $E(q,p)$ has the moment polytope $\Delta(q,p)$ given by a triangle determined by the conormals $(-1,0)$, $(0,-1)$, and $(-p,-q)$, which can be transformed to the triangle determined by $(-1,0)$, $(0,-1)$, and $(-q,-p)$.  This obviously has a smooth vertex at $(0,0)$.  Thus, we can give a neighborhood $U$ of $\tfrac{\epsilon}{pq} E(q,p)$ a toric structure so that $\tfrac{\epsilon}{pq} E(q,p)$ maps to the corresponding rescaled triangle and the blowup corresponds to cutting out this triangle.  In the polytope, this removes the smooth vertex $(0,0)$ which corresponded to the point $x$ and replaces it with vertices $(p,0)$ and $(0,q)$ which represent orbifold singularities of orders $q$ and $p$ respectively.

\begin{remark}\label{weightedblowupremarkfulton}
As in Remark \ref{orbifoldresolution}, we can resolve these two singularities using the techniques of Fulton.  The result of this procedure is two families of classes, denoted $Z_i^p$ and $Z_i^q$, each corresponding to resolving one of the singularities.  

We cannot directly apply the techniques in \cite{F} for either vertex, but up to some affine transformations, we can apply the techniques.  At the vertex $(0,q)$, we have the conormals $(-1,0)$ and $(-q,-p)$, which map to the conormals $(0,1)$ and $(p,-(p-q))$ under the transformation 
\[
\left(\begin{matrix}
0 & -1\\
-1 & 1
\end{matrix}\right)
\]
Hence, as in Remark \ref{orbifoldresolution}, we get a chain of spheres $Z_i^p$ where $(Z_i^p)^2=-a_i$ where
\[
\tfrac{p}{p-q}=a_1-\tfrac{1}{a_2-\tfrac{1}{\ldots -\tfrac{1}{a_n}}}
\] 
Additionally, at the vertex $(p,0)$, we have the conormals $(0,-1)$ and $(-q,-p)$, which map to the conormals $(0,1)$ and $(q,-k)$ under the transformation
\[
\left(\begin{matrix}
-1 & 0\\
c & -1
\end{matrix}\right)
\]
where $p-cq=-k$, so that $k\equiv-p\equiv q-p\mod q$, with $1\leq k<q$.  Again, as in Remark \ref{orbifoldresolution}, we get a chain of spheres $Z_i^q$ where $(Z_i)^q)^2=-b_j$ where
\[
\tfrac{q}{k}=b_1-\tfrac{1}{b_2-\tfrac{1}{\ldots-\tfrac{1}{b_n}}}
\]
Defined in this way, $Z_i^p\cdot Z_j^q=0$.  Also, as we will show in the below remark if the weighted exceptional divisor is given by a curve $C^E$ in the class $E$, then the proper transform $\widetilde{C}^E$ in the class $\widetilde{E}$ will be an exceptional divisor in the usual sense if $x$ is not an orbifold point.  Furthermore, $\widetilde{C}^E\cdot Z_1^p=1$, $\widetilde{C}^E\cdot Z_n^q=1$, and $\widetilde{C}^E\cdot Z_i^l=0$ for all other choices of $i,l$.  
\end{remark}

\begin{remark}\label{weightedblowupremarkdusa}
The above procedure produces a symplectic manifold $\widetilde{M}$ which is the resolution of the $(p,q)$-weighted blowup of a symplectic manifold $M$ which is obtained by a sequence of blowups, the first of which is the $(p,q)$ weighted blowup itself.  However, as McDuff shows in section $3$ of \cite{D3}, if $x$ is a smooth point the same manifold $\widetilde{M}$ can be obtained from $M$ by a sequence of standard blowups, the last of which corresponds to the $(p,q)$ weighted blowup.  We will demonstrate the general technique by showing how this works for $E(4,7)$.  

First, we write down a sequence of numbers according to the following rule.  First, we let $q_1=q$ and write down $a_1$ copies of $q_1$, where $a_1q_1\leq p<(a_1+1)q_1$.  Next, we let $q_2=p-a_1q_1$ and write down $a_2$ copies of $q_2$ where $a_2q_2\leq q_1<(a_2+1)q_2$.  We continue this procedure inductively until there is an n so that $a_nq_n=q_{n-1}$.  For $E(4,7)$, this gives us the sequence $4,3,1,1,1$.  We then cut the moment polytope of $\bC^2$ successively $a_1$ times down from the vertical edge, $a_2$ times up from the horizontal edge, $a_3$ times down from the last of the $a_1$ blowups, $a_4$ times up from the last $a_2$ blowup and so on.

In our case, this gives us the cuts $(1,1)$, $(1,2)$, $(2,3)$, $(3,5)$, and $(4,7)$, as in Figure $1$. 
\end{remark}

\begin{figure}[t]
\includegraphics[scale=0.5]{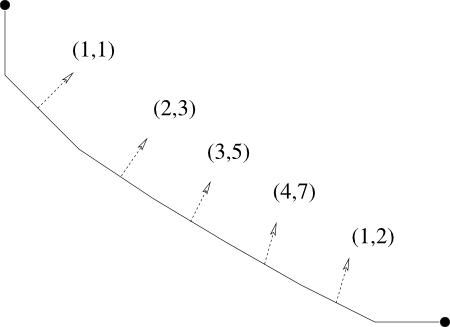}
\caption{Resolution of $(4,7)$ weighted blowup}
\end{figure}

\begin{remark}\label{r>1remark}
The above remarks deal with resolving weighted blowups of smooth points, since as in Lemma \ref{local form of (p,q,-r)}, in our case we never have to consider a weighted blowup of an orbifold singularity.  We proceeded by noting that the weighted blowup has a toric structure with moment polytope determined by the conormals $(-1,0)$, $(0,-1)$, and $(-q,-p)$.  The key point for us was to notice that we could interpret the resolution as arising from a series of smooth blowups of symplectic manifolds, which for example is how we prove Lemma \ref{intersectingweighteddivisorlemma} in $2.5$.  

If $r>1$, then as in Lemma \ref{local form of (p,q,-r)}, we will have a $(p,q)$ weighted blowup of an orbifold singularity.  In this case, we will not have as nice of a toric structure, although the $\epsilon>0$ computation of Lemma \ref{orbifoldresolutiontheorem2} does give us a toric structure with moment polytope determined by the conormals $(0,-1)$, $(-r,q\alpha)$, and $(-p,-q\beta)$ where $p\alpha+r\beta=1$.  Thus, we can resolve the weighted blowup exactly as in Remark \ref{weightedblowupremarkfulton}.

In certain cases, such as a $(5,2)$ weighted blowup at an order $3$ singularity, this resolution will still produce a $-1$ curve, while in other cases, such as a $(3,2)$-weighted blowup at an order $5$ singularity, it does not produce a $-1$ curve.  However, even if the resolution produces a $-1$ curve, the resolution will no longer come from a series of smooth blowups of symplectic manifolds because of the orbifold singularity and thus in this case we cannot prove Lemma \ref{intersectingweighteddivisorlemma}.  We are currently working on resolving this difficulty in order to extend our result to as many other cases as possible, but we have not solved this problem yet which is why we assume $r=1$ in this paper.

\end{remark}
We conclude this section by discussing weighted blowdowns of weighted exceptional divisors.  To start off, we first must discuss exactly what we mean by a weighted exceptional divisor.  

\begin{definition}\label{weightedexceptionaldivisordefinition}
Let $M^4$ be a symplectic orbifold with singularities $x_p$ and $x_q$ of orders $p$ and $q$ respectively.  As in Remark \ref{orbifoldresolution}, this gives us classes $Z_i^p$ and $Z_j^q$ so for $l,m=p,q$,
\begin{equation*}
Z_i^l\cdot Z_j^m=\left\{
\begin{array}{ll}
-k_i^l &\text{if }i=j,~l=m\\
1 &\text{if }l=m~|i-j|=1\\
0 &\text{else}
\end{array}\right.
\end{equation*}
Then we will say that a curve $C^E$ in $M$ through $x_p$ and $x_q$ is a \textbf{$(p,q)$-weighted exceptional divisor} if the class $[C^E]$ in $M$ lifts to a class $\widetilde{E}$ in $\widetilde{M}$ satisfying
\begin{enumerate}
\item $\widetilde{E}$ is an exceptional divisor in the usual sense represented by $\widetilde{C}^E$
\item Up to reversing the order of $i,j$ in $Z_i^p$ and $Z_j^q$, 
\begin{equation*}
\widetilde{C}^E\cdot Z_i^k=\left\{
\begin{array}{ll}
1 &\text{if }i=1\\
0 &\text{else}
\end{array}\right.
\end{equation*}
\end{enumerate}
\end{definition}

\begin{remark}\label{weightedexceptionalblowdownremark}
Given a weighted exceptional divisor $C^E$ as above, Remark \ref{weightedblowupremarkdusa} implies that we can successively blow down $\widetilde{C}^E$, $Z_i^p$ and $Z_j^q$ in $\widetilde{M}$ by smooth blowdowns of exceptional divisors to obtain a manifold $\widehat{M}$ which we call the $(p,q)$ weighted blowdown of $\widetilde{M}$, or just the $(p,q)$ weighted blowdown of $M$.
\end{remark}

We now say more about almost complex structures.  Specifically, we want to discuss which almost complex structures on $M$ or $\widetilde{M}$ can be blown down in the above sense.  The following theorem is based on Theorem $1.2.5$ and Remark $1.2.6$ of \cite{D2}, and describes a certain set of almost complex structures.

\begin{theorem}\label{dusatheorem}
Let $(M^4,\omega)$ be a symplectic orbifold with singularities at $p_1,\ldots, p_n$, and let $(\widetilde{M}^4,\widetilde{\omega})$ be its resolution.  In particular, we have homology classes $Z=\{Z_1,\ldots,Z_n\}$ on $\widetilde{M}$ so that
\begin{equation*}
Z_i\cdot Z_j=\left\{
\begin{array}{ll}
-k_i\leq -2 &\text{if }i=j\\
0\text{ or }1 &\text{if }|i-j|=1\\
0 &\text{else}
\end{array}\right.
\end{equation*}
Let $\widetilde{\mathcal{J}}(\mathcal{Z},\widetilde{\omega})$ and $\mathcal{J}(p_1,\ldots,p_n;\omega)$ be defined as in Definition \ref{almostcomplexstructuredefinition} and Remark \ref{almostcomplexstructureremark}.  Also, let $\widetilde{\mathcal{A}}$ be a finite, disjoint subset of $\widetilde{\mathcal{E}}\subset H_2(\widetilde{M};\bZ)$, the collection of all standard exceptional classes on $\widetilde{M}$.  Further assume for $\widetilde{A}\in\widetilde{\mathcal{A}}$, $\widetilde{A}\cdot Z_i\geq 0$ for all $i$, and $\widetilde{A}\cdot \widetilde{E}\geq 0$ for all $\widetilde{E}\in\widetilde{\mathcal{E}}\setminus\{\widetilde{A}\}$.  

Then, under these assumptions,  there is a subset $\widetilde{\mathcal{J}}(\mathcal{Z};\widetilde{\mathcal{A}})$ of $\widetilde{\mathcal{J}}(\mathcal{Z},\widetilde{\omega})$ which is path connected and residual in the sense of Baire so that for all $\widetilde{J}\in\widetilde{\mathcal{J}}(\mathcal{Z};\widetilde{\mathcal{A}})$, all the classes $\widetilde{A}$ and $Z_i$ are represented by embedded, $\widetilde{J}$-holomorphic spheres so that all intersections are positive and transverse, and a corresponding subset $\mathcal{J}(p_1,\ldots,p_n;\mathcal{A})$ of $\mathcal{J}(p_1,\ldots,p_n;\omega)$ which is also path connected and residual in the sense of Baire.
\end{theorem}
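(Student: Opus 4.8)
The plan is to treat the curves $Z_i$ and the exceptional classes $\widetilde{A}$ on separate footings, since they have opposite genericity behavior, and then to combine them using positivity of intersections. First I would observe that the $Z_i$ are handled \emph{for free}: by Definition \ref{almostcomplexstructuredefinition} every $\widetilde{J}\in\widetilde{\mathcal{J}}(\mathcal{Z},\widetilde{\omega})$ is the blow-up of a structure that is integrable near each $p_i$, so each $Z_i$ is already represented by the embedded holomorphic sphere coming from the resolution, with the prescribed intersection pattern. These curves have negative virtual dimension (by adjunction $c_1(Z_i)=Z_i^2+2=2-k_i\le 0$, so the expected dimension $2c_1(Z_i)-2\le -2$ is negative), which is exactly why they persist only on the constrained class $\widetilde{\mathcal{J}}(\mathcal{Z},\widetilde{\omega})$ rather than for a generic tame $\widetilde{J}$.

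Next I would address the exceptional classes. Each $\widetilde{A}\in\widetilde{\mathcal{A}}\subset\widetilde{\mathcal{E}}$ is a standard exceptional class, so $c_1(\widetilde{A})=1$ and $\widetilde{A}^2=-1$; the moduli space of simple $\widetilde{J}$-holomorphic spheres in class $\widetilde{A}$ has virtual dimension $0$ and, by automatic transversality for embedded spheres of self-intersection $-1$ (where $c_1=1$), is regular. By the theory of \cite{D2} --- specifically the structure of the space $\widetilde{\mathcal{E}}$ and the associated genericity statement (Theorem $1.2.5$ and Remark $1.2.6$) --- there is a residual set of tame almost complex structures for which each $\widetilde{A}$ is represented by a unique embedded $\widetilde{J}$-holomorphic sphere. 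The crux is to verify that these arguments go through \emph{within} the constrained space $\widetilde{\mathcal{J}}(\mathcal{Z},\widetilde{\omega})$, where $\widetilde{J}$ is forced to be integrable near the finitely many points $p_i$. Since this constraint pins down $\widetilde{J}$ only on neighborhoods of isolated points, and since a simple holomorphic sphere in class $\widetilde{A}$ cannot be contained in such a neighborhood, the standard transversality perturbation can be carried out in the complement of these neighborhoods, which keeps the argument valid on $\widetilde{\mathcal{J}}(\mathcal{Z},\widetilde{\omega})$.

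I would then invoke the positivity hypotheses to rule out degenerations and force simultaneous representation. Because $\widetilde{A}\cdot Z_i\ge 0$ for all $i$ and $\widetilde{A}\cdot\widetilde{E}\ge 0$ for all $\widetilde{E}\in\widetilde{\mathcal{E}}\setminus\{\widetilde{A}\}$, positivity of intersections of distinct irreducible $\widetilde{J}$-holomorphic curves shows that no $Z_i$ and no other exceptional class can appear as a component of a limiting curve in class $\widetilde{A}$. Applying Gromov compactness, any sequence of embedded representatives of $\widetilde{A}$ has a limit whose components pair nonnegatively with the $Z_i$ and the remaining $\widetilde{E}$; the hypotheses then preclude bubbling into configurations containing these classes, so $\widetilde{A}$ stays represented by an irreducible, hence embedded, sphere, and the curves representing $\{Z_i\}\cup\widetilde{\mathcal{A}}$ all meet positively and transversally. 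I would define $\widetilde{\mathcal{J}}(\mathcal{Z};\widetilde{\mathcal{A}})$ to be the set of $\widetilde{J}$ for which this simultaneous configuration exists; it is the intersection of finitely many residual conditions, hence residual, and its complement is of codimension at least two in the appropriate sense, which yields path-connectedness by the usual argument of joining two points by a generic path avoiding the complement.

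Finally, I would transport everything through the blow-down correspondence of Remark \ref{almostcomplexstructureremark}, which identifies $\widetilde{\mathcal{J}}(\mathcal{Z},\widetilde{\omega})$ with $\mathcal{J}(p_1,\ldots,p_n;\omega)$; since this identification respects the Baire structure, the residual path-connected set $\widetilde{\mathcal{J}}(\mathcal{Z};\widetilde{\mathcal{A}})$ corresponds to a residual path-connected set $\mathcal{J}(p_1,\ldots,p_n;\mathcal{A})$ downstairs. The main obstacle is the genericity-within-constraints step of the second paragraph together with the degeneration analysis of the third: one must check that McDuff's transversality and path-connectedness results from \cite{D2} survive the restriction to almost complex structures that are integrable near the $p_i$, and that the positivity hypotheses are precisely strong enough to prevent the exceptional curves from absorbing the rigid curves $Z_i$.
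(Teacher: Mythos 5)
The paper does not actually prove this statement: Theorem \ref{dusatheorem} is imported wholesale, with the text explicitly saying it ``is based on Theorem $1.2.5$ and Remark $1.2.6$ of \cite{D2},'' and no argument is given. So there is no in-paper proof to compare against; what can be said is whether your sketch is a plausible reconstruction of the result it cites. In broad strokes it is: the division of labor between the rigid negative curves $Z_i$ (built into the constrained class $\widetilde{\mathcal{J}}(\mathcal{Z},\widetilde{\omega})$ by fiat, with negative index $2c_1(Z_i)-2\le -2$) and the exceptional classes $\widetilde{A}$ (index $0$, automatically transverse when embedded with $c_1=1$), the localization of the perturbation away from the neighborhoods where $\widetilde{J}$ is pinned down, and the transport through the blow-down correspondence of Remark \ref{almostcomplexstructureremark} are all the right ingredients and match the architecture of McDuff's paper.

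Two places where your sketch is too quick if it were to stand as a proof. First, your third paragraph establishes only \emph{persistence} of a representative of $\widetilde{A}$ under Gromov limits, never \emph{existence}: for that one needs the nontriviality of the Gromov (equivalently Seiberg--Witten) invariant of an exceptional class, or the deformation argument in \cite{D2} starting from the symplectically embedded sphere that defines membership in $\widetilde{\mathcal{E}}$. Second, the claim that the hypotheses $\widetilde{A}\cdot Z_i\ge 0$ and $\widetilde{A}\cdot\widetilde{E}\ge 0$ ``preclude bubbling into configurations containing these classes'' is not immediate: nonnegative pairing with a class does not by itself forbid that class from appearing as a component of a stable limit (components can be multiply covered, and the remainder class can compensate). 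The actual argument requires the dimension count showing that, for generic $\widetilde{J}$ in the constrained class, the only irreducible curves of negative square are the $Z_i$ and exceptional curves, and then an energy/homology bookkeeping to exclude nontrivial decompositions of $\widetilde{A}$. These are exactly the points Theorem $1.2.5$ of \cite{D2} is designed to handle, which is presumably why the author cites it rather than reproving it.
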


\begin{remark}\label{almostcomplexstructureweightedblowdownremark} Now, let $M^4$ is a symplectic orbifold with singularities at $x_p$ and $x_q$ and let $C^E$ be a $(p,q)$-weighted exceptional divisor through $x_p$ and $x_q$, as defined in Definition \ref{weightedexceptionaldivisordefinition}.  Then, in particular, we have classes $Z_1,\ldots, Z_n$ obtained from resolving $x_p$ and $x_q$, as well as an exceptional divisor $\widetilde{E}$ so that $\widetilde{E}\cdot Z_i\geq 0$ for all $i$.  Then, given any $\widetilde{J}\in\widetilde{\mathcal{J}}(\mathcal{Z},\widetilde{E})$, there is a corresponding $J\in\mathcal{J}(x_p,x_q;E)$, and furthermore, there is also an almost complex structure $\widehat{J}$ on $\widehat{M}$, the $(p,q)$-weighted blowdown of $C^E$.  In other words, any such $\widetilde{J}$ and $J$ can be blown down in the $(p,q)$-weighted sense described above.
\end{remark}

\subsection{Topology of Reduced Spaces}

Now, let $(X,\omega)$ be a closed, $6$-dimensional symplectic manifold with a symplectic $S^1$ action.  We will consider the resulting reduced spaces, which form a family of closed symplectic orbifolds $M_{\lambda}$.  In particular, as we move $\lambda$ counterclockwise around the circle, we will examine how the topology of $M_{\lambda}$ changes.  Recall that we are focusing on the case where the $S^1$ action has isolated fixed points with isotropy weights $(p,q,-1)$ or $(-p,-q,1)$.

The below theorem shows how the reduced spaces change as we move through a critical level.  The statement and proof are based on Theorem 6.1 of \cite{G}

\begin{lemma}\label{local form of (p,q,-r)}Let $(X,\omega)$ be a closed symplectic manifold with a symplectic $S^1$ action which has an isolated fixed point at $x_0\in X$ with isotropy weights $(p,q,-1)$ at the moment map level $\lambda_0$ with $\gcd(p,q)=1$.  Then $M_{\lambda_0+\epsilon}$ is the $(p,q)$ weighted blowup of size $\tfrac{\epsilon}{pq}$ of $M_{\lambda_0}$ at $\overline{x}_0\in M_{\lambda_0}$.
\end{lemma}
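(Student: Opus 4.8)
The plan is to establish a local normal form for the symplectic $S^1$ action near the isolated fixed point $x_0$ and then to compute the reduced spaces $M_{\lambda_0 \pm \epsilon}$ directly from this local model, comparing them to $M_{\lambda_0}$. By the equivariant Darboux theorem (or the symplectic slice theorem), since $x_0$ is an isolated fixed point with isotropy weights $(p,q,-1)$, there is an $S^1$-equivariant symplectomorphism from a neighborhood of $x_0$ in $X$ onto a neighborhood of the origin in $\bC^3$ with its standard symplectic form, where the $S^1$ acts diagonally by $\lambda\cdot(z_1,z_2,z_3)=(\lambda^p z_1, \lambda^q z_2, \lambda^{-1}z_3)$. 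Under this identification the moment map $\Phi$ agrees, up to the additive constant $\lambda_0$, with the Hamiltonian $H = p|z_1|^2 + q|z_2|^2 - |z_3|^2$ from Lemma \ref{randomlemma}, taking the value $\lambda_0$ at $x_0$. Thus the local change in the reduced spaces is entirely governed by the reduced spaces $\overline{\bC}^3_{\epsilon} = H^{-1}(\epsilon)/S^1$ studied in Lemma \ref{randomlemma} and Proposition \ref{orbifoldresolutiontheorem2} (with $r=1$).

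First I would use the local normal form to identify $M_{\lambda_0}$ near $\overline{x}_0$ with $\overline{\bC}^3_0$, which by Lemma \ref{randomlemma} (with $r=1$, hence trivial isotropy) is just a smooth symplectic chart containing the point $\overline{x}_0$ corresponding to the origin. Away from $x_0$, the set $\Phi^{-1}(\lambda)$ is unchanged in an $S^1$-equivariant way as $\lambda$ crosses $\lambda_0$, so the reduced space is unaltered outside this neighborhood; all the topological change is concentrated in the local model. Next I would analyze $M_{\lambda_0+\epsilon}$ locally via $\overline{\bC}^3_{\epsilon}$. The key is to show that passing from $\overline{\bC}^3_0$ to $\overline{\bC}^3_{\epsilon}$ is precisely the operation of removing $\Int(\tfrac{\epsilon}{pq}E(q,p))$ and collapsing the boundary ellipsoid along its characteristic flow, as in Definition \ref{weightedblowupdefinition}. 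Concretely, on $H^{-1}(\epsilon)$ the condition $p|z_1|^2+q|z_2|^2 = \epsilon + |z_3|^2 \geq \epsilon$ means the image of $(z_1,z_2)$ avoids the interior of the ellipsoid $\{p|z_1|^2+q|z_2|^2 < \epsilon\}$, while the locus $z_3=0$ collapses under the $S^1$ action to the weighted exceptional divisor.

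The cleanest way to make this precise is through the toric description in Proposition \ref{orbifoldresolutiontheorem2}: the moment polytope for $\overline{\bC}^3_0$ is the wedge with conormals $(0,-1)$ and $(-1, q\alpha)$ (taking $r=1$, so $\alpha=1,\beta=0$), while that for $\overline{\bC}^3_{\epsilon}$ is the same wedge truncated by the additional conormal $(-p,-q\beta)$. Comparing this truncation with the moment polytope $\Delta(q,p)$ of the ellipsoid $E(q,p)$ described in the discussion following Definition \ref{weightedblowupdefinition} shows that the polytope of $\overline{\bC}^3_{\epsilon}$ is obtained from that of $\overline{\bC}^3_0$ precisely by cutting off the corner corresponding to the rescaled triangle $\tfrac{\epsilon}{pq}\Delta(q,p)$, which is the toric model of the $(p,q)$-weighted blowup. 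Matching the sizes, the edge introduced has symplectic area $\epsilon$, consistent with $\int_{C^E}\widetilde{\omega}=\epsilon$ and with the blowup being of size $\tfrac{\epsilon}{pq}$. I would then conclude that $M_{\lambda_0+\epsilon}$ is globally obtained from $M_{\lambda_0}$ by this local surgery, which is by definition the $(p,q)$-weighted blowup of size $\tfrac{\epsilon}{pq}$ at $\overline{x}_0$.

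The main obstacle I anticipate is verifying that the abstract symplectic-reduction surgery $\overline{\bC}^3_0 \rightsquigarrow \overline{\bC}^3_{\epsilon}$ coincides on the nose, as a symplectomorphism onto the complement of the exceptional set and with the correct symplectic areas, with the cut-and-collapse operation of Definition \ref{weightedblowupdefinition}, rather than merely agreeing combinatorially at the level of moment polytopes. In particular one must check that the collapsed boundary is exactly the weighted exceptional divisor $C^E$ in class $E$ and that the form $\widetilde{\omega}$ produced by reduction matches the one prescribed in the blowup definition outside the excised ellipsoid; handling the dependence on $\epsilon$ and keeping track of the factor $\tfrac{1}{pq}$ in the size is where the care is required. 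Everything else reduces to the equivariant Darboux normal form together with the already-established toric computations of Lemma \ref{randomlemma} and Proposition \ref{orbifoldresolutiontheorem2}.
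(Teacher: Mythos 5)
Your proposal is correct and follows essentially the same route as the paper: equivariant local normal form at $x_0$, identification of $\overline{\bC}^3_0$ with $\bC^2$ and of $\overline{\bC}^3_\epsilon$ with the result of excising $\Int(\tfrac{\epsilon}{pq}E(q,p))$ and collapsing the boundary along its characteristic flow (the $z_3=0$ locus), followed by globalization away from the fixed point. The only cosmetic difference is that you lean on the toric polytope picture of Proposition \ref{orbifoldresolutiontheorem2} to ``make this precise,'' whereas the paper carries out the direct quotient computation from Lemma \ref{randomlemma} and packages the globalization step as Lemma \ref{homologyinvariantdiffeomorphismliftinglemma}.
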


\begin{proof}Since the $S^1$ action has isolated fixed points, there is a neighborhood of the $(p,q,-1)$ fixed point which maps equivariantly to $\bC^3$ with the action 
\[
e^{2\pi i\lambda}\cdot(z_1,z_2,z_3)=(z_1e^{2\pi pi\lambda},z_2e^{2\pi qi\lambda},z_3e^{-2\pi i\lambda})
\]
A moment map for this action is given by
\[
H=p|z_1|^2+q|z_2|^2-|z_3|^2
\]
Clearly, 0 is the only critical value of the moment map.  For $\epsilon>0$, we examine the structure of the reduced spaces $\overline{\bC}^3_{-\epsilon}$ and $\overline{\bC}^3_{\epsilon}$.  

First, consider $\overline{\bC}^3_{-\epsilon}$ for $\epsilon\geq 0$.  For $\epsilon=0$, Lemma \ref{randomlemma} gives that $\overline{\bC}^3_0\cong\bC^2$.  For $\epsilon>0$, the same argument works by using the embedding of $\bC^2$ into $H^{-1}(-\epsilon)$ given by
\[
(z_1,z_2)\mapsto\left(z_1,z_2,\sqrt{p|z_1|^2+q|z_2|^2+\epsilon}\right)
\]

Now, consider $\overline{\bC}^3_{\epsilon}$.  Recall from Lemma \ref{randomlemma} that there is a moment map for our $S^1$ action given by $H=p|z_1|^2+q|z_2|^2-|z_3|^2$, and therefore, $\overline{\bC}^3_\epsilon$ can be computed by taking the manifold
\[
|z_3|^2+\epsilon=p|z_1|^2+q|z_2|^2
\]
and quotienting by the $S^1$ action.  Thus, $H^{-1}(\epsilon)$ consists of all points $(z_1,z_2,z_3)$ satisfying
\[
\tfrac{|z_1|^2}{q}+\tfrac{|z_2|^2}{p}=\tfrac{1}{pq}(|z_3^2|+\epsilon)
\]
Reordering terms we see there is an embedding from $\bC^2\setminus(\Int\tfrac{\epsilon}{pq}E(q,p))$ into $H^{-1}(\epsilon)$ defined as follows:
\[
(z_1,z_2)\mapsto\bigl(z_1,z_2,\sqrt{p|z_1|^2+q|z_2|^2-\epsilon}\bigr)
\]
Now, consider $H^{-1}(\epsilon)/S^1$.  As in Lemma \ref{randomlemma} and using the above embedding, we can identify this with the set of points 
\[
\{(z_1,z_2,(p|z_1|^2+q|z_2|^2-\epsilon))\}/\!\sim~\cong~[\bC^2\setminus(\Int\tfrac{\epsilon}{pq}E(q,p))]/\!\sim,
\]
where since $r=1$, $(z_1,z_2)\sim(w_1,w_2)$ if and only if there is $\lambda\in S^1$ so that 
\[
\lambda(p|z_1|^2+q|z_2|^2-\epsilon)=p|w_1|^2+q|w_2|^2-\epsilon.
\]
This gives two cases: either $p|z_1|^2+q|z_2|^2-\epsilon=0$ or $p|z_1|^2+q|z_2|^2-\epsilon>0$.  With respect to our earlier embedding, $p|z_1|^2+q|z_2|^2-\epsilon=0$ corresponds to the boundary of $\bC^2\setminus(\Int\tfrac{\epsilon}{pq}E(q,p))$, while $p|z_1|^2+q|z_2|^2-\epsilon>0$ corresponds to the interior.  

If $p|z_1|^2+q|z_2|^2-\epsilon>0$, then as in Lemma \ref{randomlemma} with $r=1$, we must have $\lambda=1$, so that $(z_1,z_2)=(w_1,w_2)$.  

Now consider $p|z_1|^2+q|z_2|^2-\epsilon=0$.  In this case, any $\lambda\in S^1$ preserves this, since $0$ is a fixed point of the $S^1$ action.  In particular, along this ellipsoid boundary, we collapse the entire $S^1$ action.  However, our $S^1$ action restricted to this ellipsoid boundary is exactly the action which generates the characteristic flow.  Combining this with the above, we see that $\overline{\bC}^3_\epsilon$ is formed from $\overline{\bC}^3_0$ by removing the interior of $\tfrac{\epsilon}{pq}E(q,p)$ and collapsing the boundary along its characteristic flow, which is exactly a $(p,q)$ weighted blowup of size $\epsilon$ at the origin of $\overline{\bC}^3_0$, as claimed.

This all shows that there are neighborhoods $\mathcal{N}(\lambda)$ in $M_\lambda$ so that $\overline{x}_0\in \mathcal{N}(\lambda_0)$ and furthermore, $\mathcal{N}(\lambda_0+\epsilon)$ is the $(p,q)$ weighted blowup of size $\tfrac{\epsilon}{pq}$ of $\mathcal{N}(\lambda_0)$ at $\overline{x}_0\in \mathcal{N}(\lambda_0)$.  In particular, we have a blowup map $\rho_\epsilon$ so that
\[
\rho_\epsilon:\mathcal{N}(\lambda_0)\setminus\{\overline{x}_0\}\longrightarrow\mathcal{N}(\lambda_0+\epsilon)\setminus C_\lambda^E
\]
is a diffeomorphism, where $C_\lambda^E$ is a representative of the weighted exceptional divisor.  Thus, Lemma \ref{homologyinvariantdiffeomorphismliftinglemma} below shows that we can extend $\rho_\epsilon$ to a map
\[
\rho_\epsilon:M_{\lambda_0}\longrightarrow M_{\lambda_0+\epsilon}
\]
so that the restriction
\[
\rho_\epsilon:M_{\lambda_0}\setminus\{\overline{x}_0\}\longrightarrow M_{\lambda_0+\epsilon}\setminus C_\lambda^E
\]
is a diffeomorphism.  The map $\rho_\epsilon$ then clearly identifies $M_{\lambda_0+\epsilon}$ as the $(p,q)$-weighted blowup of $M_{\lambda_0}$ at $\overline{x}_0$, as desired.
\end{proof}

\begin{remark}\label{(-p,-q,r)remark}
An exactly analogous computation for a fixed point with isotropy weights $(-p,-q,1)$ would show that locally, we have $M_{\lambda_0}$ is the $(p,q)$ weighted orbifold blowdown of $M_{\lambda_0-\epsilon}$.  Equivalently we could read the argument backwards to get that $M_{\lambda_0-\epsilon}$ is the $(p,q)$ weighted orbifold blowup of $M_{\lambda_0}$.
\end{remark}

The above discusses how the reduced spaces change when we move across a critical point of the moment map.  The following theorem says that if we move across an interval without critical points, then we do not change the reduced spaces.  This theorem is proven in the introduction of \cite{DH}

\begin{lemma}\label{homologyinvariantlemma}Let $(X,\omega)$ be a $6$-dimensional manifold with an effective, Hamiltonian $S^1$ action with a proper moment map $H$.  Consider the family $M_{\lambda}$ of reduced spaces of this action. Then if $\lambda_0,\lambda_1$ lie inside of an interval of regular values of the moment map, there is an orientation-preserving diffeomorphism 
\[
\phi:M_{\lambda_0}\rightarrow M_{\lambda_1}.
\]
\end{lemma}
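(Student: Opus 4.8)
The plan is to prove Lemma \ref{homologyinvariantlemma} by the standard Duistermaat--Heckman moving-lemma argument, constructing the diffeomorphism between reduced spaces by flowing along a vector field that trivializes the family of level sets. First I would observe that since $\lambda_0$ and $\lambda_1$ lie in a common interval $[\lambda_0,\lambda_1]$ of regular values, the preimage $H^{-1}([\lambda_0,\lambda_1])$ contains no critical points, and hence $dH$ is nowhere zero on this region. The crucial point is that away from fixed points the $S^1$ action is locally free, so we must work $S^1$-equivariantly: I would choose an $S^1$-invariant Riemannian metric on $X$ (obtained by averaging any metric over the compact group $S^1$) and form the normalized gradient-type vector field
\begin{equation*}
\xi = \frac{\grad H}{\|\grad H\|^2},
\end{equation*}
which satisfies $dH(\xi) = 1$ and is $S^1$-invariant because both $H$ and the metric are invariant.

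Next I would use the flow $\psi_t$ of $\xi$ to identify the level sets. Since $dH(\xi)=1$, the flow carries $H^{-1}(\lambda_0)$ to $H^{-1}(\lambda_0 + t)$ for $t \in [0, \lambda_1 - \lambda_0]$, and this is well-defined (the flow does not escape) precisely because $H$ is proper and there are no critical values in the interval, so the level sets remain compact and the flow exists for the full time. Because $\xi$ is $S^1$-invariant, $\psi_t$ is $S^1$-equivariant, and therefore it descends to a diffeomorphism of the quotient orbifolds
\begin{equation*}
\phi = \overline{\psi_{\lambda_1 - \lambda_0}} : M_{\lambda_0} = H^{-1}(\lambda_0)/S^1 \longrightarrow H^{-1}(\lambda_1)/S^1 = M_{\lambda_1}.
\end{equation*}
Here I would invoke the orbifold structure discussion from Section 2: since the action is effective with only isolated points of nontrivial isotropy, the quotients are the symplectic orbifolds $M_\lambda$, and an equivariant diffeomorphism of the level sets matching up isotropy descends to an orbifold diffeomorphism in the sense of Definition \ref{orbifolddiffeomorphismdefinition}.

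Finally I would check that $\phi$ is orientation preserving. The level sets $H^{-1}(\lambda)$ carry a canonical orientation induced from the symplectic orientation of $X$ together with the coorienting function $H$ (equivalently the vector field $\xi$), and the quotient orbifolds inherit their symplectic orientation from reduction; since $\psi_t$ is the time-$t$ flow of a single vector field it is isotopic to the identity and hence preserves these orientations, so $\phi$ is orientation preserving as claimed. I expect the main obstacle to be the equivariance and orbifold bookkeeping rather than the analysis: one must verify that flowing along $\xi$ genuinely respects the stratification by isotropy type (so that orbifold points of a given order and type are carried to orbifold points of the same order and type), which is what guarantees that $\psi_t$ descends to a bona fide orbifold diffeomorphism rather than merely a homeomorphism. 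This follows because the invariant flow commutes with the $S^1$ action and therefore preserves stabilizers, but making this precise in the orbifold category is the delicate step; the existence and completeness of the flow, by contrast, are immediate from properness of $H$ and the absence of critical values.
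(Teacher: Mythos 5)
Your proposal is correct and is essentially the argument the paper relies on: the paper gives no proof of its own here, deferring entirely to the introduction of \cite{DH}, and your normalized-invariant-gradient-flow construction is exactly the standard Duistermaat--Heckman moving argument found there. The equivariance, completeness-via-properness, and orientation points you raise are all handled correctly.
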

Using this, we can prove the following technical diffeomorphism extension lemma which we used in Lemma \ref{local form of (p,q,-r)} and which we will use below.

\begin{lemma}\label{homologyinvariantdiffeomorphismliftinglemma}
Let $(X,\omega)$ be a $6$-dimensional manifold with an effective, Hamiltonian $S^1$ action with isolated fixed points with a moment map $H$ with image $[\lambda-\epsilon,\lambda+\epsilon]$.  Further assume that $\lambda$ is the only interior critical value, which corresponds to a fixed point $x_\lambda\in X$ and further corresponds to a point $p_\lambda$ in the reduced space $M_\lambda$.  

Now, let $\lambda'\neq\lambda$ and assume that we have a neighborhood $\mathcal{N}(t)$ of $M_{\lambda+t}$ for all $t\in[-\epsilon,\epsilon]$ so that $p_\lambda\in\mathcal{N}(0)$ and $\mathcal{N}(t)$ is an open set in $M_{\lambda+t}$.  Furthermore, assume that there is some (possibly empty) closed set $\overline{U}(t)\subset\mathcal{N}(t)$ for $t\in[-\epsilon,\epsilon]$ and diffeomorphisms
\[
\phi_t:\mathcal{N}(\lambda)\setminus{\overline{U}(0)}\longrightarrow\mathcal{N}(t)\setminus\overline{U}(t).
\]
Then shrinking $\mathcal{N}(t)$ if necessary, there are extensions
\[
\psi_t:M_\lambda\setminus{\overline{U}(0)}\longrightarrow M_{\lambda+t}\setminus\overline{U}(t)
\]
so that $\psi_t$ is a diffeomorphism.
\end{lemma}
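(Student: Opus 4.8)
The plan is to build $\psi_t$ by gluing together two diffeomorphisms on overlapping regions of $M_\lambda$: the given local model $\phi_t$ on a shrunken neighborhood of $p_\lambda$, and a gradient-flow diffeomorphism defined away from $p_\lambda$, interpolating between them across an annular overlap. The point is that the critical value $\lambda$ only affects the reduced spaces near $p_\lambda$, so on the complement of a neighborhood of $p_\lambda$ the reduced spaces are identified by the same flow that proves Lemma \ref{homologyinvariantlemma}, while near $p_\lambda$ we are simply handed the maps $\phi_t$.

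First I would produce the diffeomorphism away from the critical point. Fix an $S^1$-invariant Riemannian metric on $X$. Since $x_\lambda$ is the only interior critical point of $H$ and it is isolated, the normalized gradient $Y=\nabla H/|\nabla H|^2$, which satisfies $dH(Y)=1$, is a well-defined $S^1$-invariant vector field on $X\setminus\{x_\lambda\}$. This is exactly the construction underlying Lemma \ref{homologyinvariantlemma}: the time-$t$ flow of $Y$ carries $H^{-1}(\lambda)$ to $H^{-1}(\lambda+t)$ and descends to the reduced spaces. Choosing a neighborhood $\mathcal{N}'(0)$ of $p_\lambda$ with $\overline{U}(0)\subset\mathcal{N}'(0)$, small enough that the descended flow is defined on its complement for all $t\in[-\epsilon,\epsilon]$, I obtain diffeomorphisms
\[
\chi_t:M_\lambda\setminus\mathcal{N}'(0)\longrightarrow M_{\lambda+t}\setminus\mathcal{N}'(t),
\]
where $\mathcal{N}'(t)$ is a neighborhood of the critical locus containing $\overline{U}(t)$; thus $\chi_t$ avoids $\overline{U}(t)$ entirely.

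Next, after shrinking $\mathcal{N}(t)$ so that $\mathcal{N}'(0)\subset\subset\mathcal{N}(0)$, both $\phi_t$ and $\chi_t$ are defined on the annular overlap $A=\mathcal{N}(0)\setminus\overline{\mathcal{N}'(0)}$, which lies in the complement of $\overline{U}(0)$. On $A$ set $h_t=\phi_t^{-1}\circ\chi_t$, a self-diffeomorphism of an annular region equal to the identity at $t=0$ (here I use, as holds in the intended application, that the $\phi_t$ vary continuously in $t$ with $\phi_0=\id$, and $\chi_0=\id$ by construction). Shrinking $\epsilon$ makes $h_t$ as $C^1$-close to $\id$ as desired. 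Writing $h_t$ as the time-one flow of a small time-dependent field $V_t$ supported in $A$, and choosing a cutoff $\beta$ on $M_\lambda$ equal to $0$ near $\partial\mathcal{N}'(0)$ and $1$ near $\partial\mathcal{N}(0)$, let $h_t^\beta$ be the time-one flow of $\beta V_t$. Then define
\[
\psi_t=
\begin{cases}
\phi_t & \text{on } \mathcal{N}'(0)\setminus\overline{U}(0),\\
\phi_t\circ h_t^\beta & \text{on } A,\\
\chi_t & \text{on } M_\lambda\setminus\mathcal{N}(0).
\end{cases}
\]
Near $\partial\mathcal{N}'(0)$ one has $h_t^\beta=\id$, so the first two cases agree; near $\partial\mathcal{N}(0)$ one has $h_t^\beta=h_t$, so $\phi_t\circ h_t^\beta=\chi_t$ and the last two agree. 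Hence $\psi_t$ is smooth and restricts to $\phi_t$ on the shrunken neighborhood, as required.

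The hard part will be verifying that $\psi_t$ is genuinely a diffeomorphism onto $M_{\lambda+t}\setminus\overline{U}(t)$, not merely a smooth map. Injectivity across $A$ follows from the $C^1$-smallness of $h_t^\beta$, secured by taking $\epsilon$ small; the three pieces then map onto the inner, annular, and outer regions of $M_{\lambda+t}$, and surjectivity onto $M_{\lambda+t}\setminus\overline{U}(t)$ holds because $\phi_t$ covers $\mathcal{N}(t)\setminus\overline{U}(t)$ while $\chi_t$ covers $M_{\lambda+t}\setminus\mathcal{N}'(t)$. The one genuinely delicate estimate is that damping $h_t$ by $\beta$ preserves invertibility uniformly across the annulus; this is precisely why shrinking both $\epsilon$ and $\mathcal{N}(t)$ is essential, and it is the step I would expect to occupy most of the written proof.
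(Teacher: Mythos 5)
Your proposal is correct and takes essentially the same route as the paper: a flow-induced diffeomorphism of reduced spaces away from the critical fibre (the content of Lemma \ref{homologyinvariantlemma}), the given maps $\phi_t$ near $p_\lambda$, and an interpolation between the two on the annular overlap after shrinking. The paper compresses the gluing step into the single word ``extrapolating,'' so your cutoff--vector--field construction simply supplies the detail (and the continuity-in-$t$ hypothesis on $\phi_t$) that the paper's one-line version leaves implicit.
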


\begin{proof}
We can define the manifold $X(\lambda,\epsilon)$ by taking
\[
H^{-1}\left(\bigcup_{t\in[\lambda-\epsilon,\lambda+\epsilon]}M_t\setminus\overline{U}(t)\right)
\]
Then $X(\lambda,\epsilon)$ is a compact, symplectic, $6$-dimensional manifold with a Hamiltonian circle action with moment map $H$.  Further, $H$ is proper since $X(\lambda,\epsilon)$ is compact.  Thus, by Lemma \ref{homologyinvariantlemma}, for all $t\in(-\epsilon,\epsilon)$, we know that there is a diffeomorphism
\[
\phi'_t:M_\lambda\setminus\mathcal{N}(\lambda)\longrightarrow M_{\lambda+t}\setminus\mathcal{N}(t)
\]
Extrapolating between $\phi'_t$ and $\phi_t$ gives diffeomorphisms 
\[
\psi_t:M_\lambda\setminus\overline{U}(0)\longrightarrow M_{\lambda+t}\setminus\overline{U}(t)
\]
Possibly shrinking the size of $\mathcal{N}(t)$, we can further assume that $\psi_t$ restricts to $\phi_t$, as desired.
\end{proof}

We can use this diffeomorphism extension lemma to prove the following.

\begin{lemma}\label{homologyinvariantlemma2}
Let $(X,\omega)$ be a closed, $6$-dimensional symplectic manifold with an effective, symplectic $S^1$ action with moment map $\Phi$ and isolated fixed points with isotropy weights $(p,q,-1)$ or $(-p,-q,1)$.  Then if $\lambda_0$ and/or $\lambda_1$ are the only critical values in $[\lambda_0,\lambda_1]$ and $M_{\lambda_0}$ does not differ from $M_{\lambda_1}$ by a weighted blowup as in Lemma \ref{local form of (p,q,-r)}, we have an orientation preserving diffeomorphism
\[
\phi:M_{\lambda_0}\longrightarrow M_{\lambda_1}
\]. 
Furthermore, for any $\lambda,\lambda'\in[\lambda_0,\lambda_1]$ where $[\lambda_0,\lambda_1]$ is an interval as above, there is a diffeomorphism
\[
\widetilde{\phi}:\widetilde{M}_{\lambda}\longrightarrow\widetilde{M}_{\lambda'}
\]
\end{lemma}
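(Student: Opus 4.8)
The plan is to build the diffeomorphism $\phi$ by patching a diffeomorphism on the regular part of the interval with local diffeomorphisms across the critical endpoints, and then to obtain the statement about resolutions by lifting these orientation-preserving orbifold diffeomorphisms via Lemma \ref{diffeomorphismliftinglemma}. First I would dispose of the interior. Since the only critical values in $[\lambda_0,\lambda_1]$ can be the endpoints, the open interval $(\lambda_0,\lambda_1)$ consists entirely of regular values; restricting the $S^1$-valued moment map $\Phi$ to a closed subinterval of the interior realizes the relevant piece of $X$ as a compact Hamiltonian $S^1$-manifold with an honest $\bR$-valued proper moment map, so Lemma \ref{homologyinvariantlemma} applies and yields orientation-preserving diffeomorphisms $M_\mu\to M_{\mu'}$ for all interior $\mu,\mu'$.

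Next I would analyze the two endpoints. By Lemma \ref{local form of (p,q,-r)}, crossing a $(p,q,-1)$ critical value in the increasing direction performs a $(p,q)$-weighted blowup, while by Remark \ref{(-p,-q,r)remark} crossing a $(-p,-q,1)$ critical value in the increasing direction performs a weighted blowdown. Thus each endpoint crossing \emph{into} the interval is either a weighted blowup, a weighted blowdown, or (if the endpoint is regular) trivial. The standing hypothesis that $M_{\lambda_0}$ does not differ from $M_{\lambda_1}$ by a weighted blowup forces each such crossing to be a blowdown or trivial: a blowup crossing would, after composing with the interior diffeomorphism, exhibit one of the two reduced spaces as a weighted blowup of the other. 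In the remaining cases the local computation in the proof of Lemma \ref{local form of (p,q,-r)} shows that the endpoint reduced space agrees with the neighboring interior space away from the image $\overline{x}_0$ of the critical fixed point, and is smooth (locally $\cong\bC^2$) there. This gives a diffeomorphism defined off the single point $\overline{x}_0$, which Lemma \ref{homologyinvariantdiffeomorphismliftinglemma} extends across $\overline{x}_0$; composing the two endpoint extensions with the interior diffeomorphism produces the desired orientation-preserving $\phi:M_{\lambda_0}\to M_{\lambda_1}$.

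For the second assertion I would note that all the diffeomorphisms produced above are orbifold diffeomorphisms in the sense of Definition \ref{orbifolddiffeomorphismdefinition}: on the regular part they arise from the flow underlying Lemma \ref{homologyinvariantlemma}, and near the collapsed points they respect the (possibly trivial) local orbifold structure. Since they are orientation preserving, Lemma \ref{diffeomorphismliftinglemma} applies directly to lift any orientation-preserving orbifold diffeomorphism $M_\lambda\to M_{\lambda'}$, with $\lambda,\lambda'\in[\lambda_0,\lambda_1]$, to a diffeomorphism $\widetilde{\phi}:\widetilde{M}_\lambda\to\widetilde{M}_{\lambda'}$ of the resolutions.

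I expect the main obstacle to be the endpoint analysis. One must verify that in the no-blowup cases the two reduced spaces are genuinely diffeomorphic \emph{as orbifolds} — matching singularities, their types, and orientation — rather than merely homeomorphic, so that the lifting Lemma \ref{diffeomorphismliftinglemma} is applicable; in particular one must confirm that across a blowdown endpoint the weighted exceptional configuration lies strictly outside $[\lambda_0,\lambda_1]$, so that no unresolved singularity is introduced on the interval. The second delicate point is ensuring that the diffeomorphism patched together by Lemma \ref{homologyinvariantdiffeomorphismliftinglemma} at the collapsed point $\overline{x}_0$ is smooth and orientation preserving, since this is where the regular-interior diffeomorphism is glued to the local model.
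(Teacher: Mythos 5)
Your proposal is correct and follows essentially the same route as the paper: the interior is handled by Lemma \ref{homologyinvariantlemma}, the endpoint crossings are handled by the local model from Lemma \ref{local form of (p,q,-r)} together with the extension Lemma \ref{homologyinvariantdiffeomorphismliftinglemma} (the paper takes $\overline{U}(t)=\emptyset$ there), the pieces are composed, and the resolution statement is obtained by lifting via Lemma \ref{diffeomorphismliftinglemma}. The extra care you flag about orientation and matching orbifold types is exactly what Lemma \ref{orbifoldtypelemma} and Remark \ref{sameresolutionremark} are set up to supply, so no new argument is needed.
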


\begin{proof}
To prove the first half of the statement, note that if $[\lambda_0,\lambda_1]$ has $\lambda_0$ and/or $\lambda_1$ as the only critical values and the reduced spaces do not differ by weighted blowups as in Lemma \ref{local form of (p,q,-r)}, then as in the proof of Lemma \ref{local form of (p,q,-r)}, there is an $\epsilon$ and neighborhoods $\mathcal{N}(t)$ inside of $M_{\lambda_0+t}$ for $t\in[0,\epsilon]$ such that the fixed point at $\lambda_0$ is in $\mathcal{N}(0)$, and $\mathcal{N}(0)$ is diffeomorphic to $\mathcal{N}(t)$ for all $t$.  Then choosing the closed sets $\overline{U}(t)=\emptyset$ for all $t$, Lemma \ref{homologyinvariantdiffeomorphismliftinglemma} above implies the existence of orientation preserving diffeomorphisms
\[
\phi^0_t:M_{\lambda_0}\longrightarrow M_{\lambda_0+t}
\]
By a similar argument near $\lambda_1$, there is an orientation preserving diffeomorphism
\[
\phi^1_t:M_{\lambda_1-t}\longrightarrow M_{\lambda_1}
\]
Then, by assumption, $[\lambda_0+\epsilon,\lambda_1-\epsilon]$ has no critical values, so by Lemma \ref{homologyinvariantlemma}, there is an orientation-preserving diffeomorphism
\[
\phi':M_{\lambda_0+\epsilon}\longrightarrow M_{\lambda_1-\epsilon}
\] 
Defining $\phi:=\phi^1_\epsilon\circ\phi'\circ\phi^0_\epsilon$, we get the desired diffeomorphism, which finishes the proof of the first statement.

The prove the second statment, we notice that by the above statement and Lemma \ref{homologyinvariantlemma}, there is an orientation preserving diffeomorphism
\[
\phi:M_\lambda\longrightarrow M_{\lambda'}
\]
which then lifts to
\[
\widetilde{\phi}:\widetilde{M}_\lambda\longrightarrow\widetilde{M}_{\lambda'}
\]
by Lemma \ref{diffeomorphismliftinglemma}.
\end{proof}

\subsection{Some Intersection Theory}
In this section, we will prove some useful results pertaining to intersection theory.  First, we will give a useful criterion for determining when a closed, symplectic $4$ manifold has $b_2^+=1$.  We recall Theorem $1.4$ from \cite{D}

\begin{theorem}\label{dusab+=1lemma}
Let $(M,\omega)$ be a closed symplectic $4$-manifold and assume that there exists a symplectically immersed $2$-sphere $C$ with only positively oriented transverse double points.  Then if $c_1(C)\geq 2$, $(M,\omega)$ is rational or ruled.  In particular, $b_2^+=1$.
\end{theorem}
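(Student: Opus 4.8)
The final statement to prove is Theorem \ref{dusab+=1lemma}, which is quoted from \cite{D} (a McDuff result), so the natural plan is to reduce it to known classification machinery for symplectic $4$-manifolds rather than to reprove that machinery from scratch.

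The plan is to first replace the immersed sphere $C$ by an embedded curve in a blowup. Since $C$ is symplectically immersed with only positively oriented transverse double points, I would resolve each double point symplectically: blowing up at each self-intersection point turns the local picture of two transverse positive branches into two disjoint branches meeting the exceptional divisor, so after $\delta$ blowups (where $\delta$ is the number of double points) the proper transform $\widetilde{C}$ is an embedded symplectic sphere in $(\widetilde{M},\widetilde{\omega})$. I would then track how the homological data transform: each resolution drops the self-intersection $C\cdot C$ by $2$ and drops $c_1(C)$ by $1$ (the adjunction defect of a node), so the genus formula $2g-2 = C\cdot C - c_1(C)$ is preserved and the embedded sphere $\widetilde{C}$ still satisfies $g=0$. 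The key point is that the hypothesis $c_1(C)\geq 2$ survives in a usable form after resolution.

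Next I would apply McDuff's structure theorem for symplectic $4$-manifolds containing an embedded symplectic sphere of nonnegative self-intersection. Concretely, once $\widetilde{C}$ is embedded and symplectic, the adjunction formula for embedded spheres gives $\widetilde{C}\cdot\widetilde{C} = c_1(\widetilde{C}) - 2 \geq c_1(C)-\delta-2$; the main task is to verify that $\widetilde{C}\cdot\widetilde{C}\geq 0$, i.e. that enough positivity is retained. With an embedded symplectic sphere of self-intersection $\geq 0$ in hand, McDuff's theorem (the same circle of ideas used to prove that a symplectic $4$-manifold containing such a sphere is rational or ruled) applies directly to $\widetilde{M}$, yielding that $\widetilde{M}$ is rational or ruled. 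Since rationality/ruledness is preserved under blowdown, $M$ itself is rational or ruled.

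Finally I would extract the cohomological conclusion. Both rational and ruled symplectic $4$-manifolds satisfy $b_2^+=1$: for rational surfaces this is immediate from the standard models $\mathbb{CP}^2 \# k\overline{\mathbb{CP}^2}$ and $S^2\times S^2$, and blowups do not change $b_2^+$ (as already noted in Remark \ref{blowupremark}); for ruled surfaces over a base of genus $g$ one has $b_2^+=1$ as well. Hence $b_2^+(M)=b_2^+(\widetilde{M})=1$, completing the proof. The main obstacle I anticipate is the bookkeeping in the resolution step — ensuring that after resolving all double points the resulting embedded sphere genuinely has nonnegative self-intersection so that the rational-or-ruled dichotomy applies, rather than landing in a case where $\widetilde{C}\cdot\widetilde{C}<0$ where one would instead need to blow down $\widetilde{C}$ and argue differently; controlling this via the constraint $c_1(C)\geq 2$ together with the genus-zero condition is the crux.
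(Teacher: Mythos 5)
The paper does not actually prove this statement: it is quoted verbatim as Theorem 1.4 of \cite{D}, so the only meaningful comparison is with McDuff's original argument. Your proposed route --- resolve the double points by blowing up, then invoke the classification of symplectic $4$-manifolds containing an embedded symplectic sphere of nonnegative self-intersection --- breaks down at exactly the point you flag as the crux, and the break is not repairable within that strategy. First, the numerology is off: blowing up at a transverse double point replaces $[C]$ by the proper transform $[C]-2[E]$, so each node costs $4$ in self-intersection and $2$ in $c_1$, not $2$ and $1$; with your numbers the immersed adjunction identity $2g-2+2\delta=C\cdot C-c_1(C)$ would not be carried to the embedded identity $\widetilde{C}\cdot\widetilde{C}-c_1(\widetilde{C})=-2$. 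Second, and fatally, after resolving all $\delta$ nodes one gets $\widetilde{C}\cdot\widetilde{C}=c_1(\widetilde{C})-2=c_1(C)-2\delta-2$, which under the sole hypothesis $c_1(C)\geq 2$ is at most $-2$ as soon as $\delta\geq 1$ (for instance $c_1(C)=2$, $\delta=1$ gives $C\cdot C=2$ but $\widetilde{C}\cdot\widetilde{C}=-2$). So except in the already-embedded case you land precisely in the regime where the embedded-sphere dichotomy does not apply, and there is no fallback of ``blowing down $\widetilde{C}$'' since a sphere of square $\leq -2$ cannot be blown down.

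The hypothesis $c_1(C)\geq 2$ is not there to control the self-intersection of a resolved representative; it is an index condition. McDuff's proof works directly with the immersed curve: one makes $C$ $J$-holomorphic for a tame $J$, notes that the moduli space of $J$-holomorphic spheres in the class $[C]$ has dimension $2\bigl(c_1(C)-1\bigr)\geq 2$, so that for generic $J$ these curves sweep out all of $M$, and then uses Gromov compactness together with an analysis of the degenerate (cusp) curves in the family to conclude that $M$ is rational or ruled. Your final step (rational or ruled implies $b_2^+=1$, and $b_2^+$ is a blowup invariant) is fine, but the reduction that is supposed to feed into it is missing. If you want to present this theorem in the paper, the honest options are either to cite it, as the author does, or to reproduce the $J$-holomorphic sweeping argument; the blowup reduction does not work.
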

We can use the above theorem to prove the following lemma.

\begin{lemma}\label{b+=1lemma}
Let $(M,\omega)$ be a closed symplectic $4$ manifold.  If $M$ contains two embedded $J$-holomorphic $-1$ spheres $C^E_1$ and $C^E_2$ with $E_1\cdot E_2=k\geq 1$, then $b_2^+=1$.
\end{lemma}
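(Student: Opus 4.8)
The plan is to combine the two given $-1$-spheres into a single immersed sphere with positive self-intersection and large enough $c_1$, and then invoke Theorem \ref{dusab+=1lemma}. The natural candidate is to look at the class $E_1 + E_2$ and try to represent it by a symplectically immersed sphere with only positively oriented transverse double points. First I would arrange, by choosing $J$ generically (or by perturbing the given curves), that the two embedded $J$-holomorphic spheres $C^E_1$ and $C^E_2$ meet transversally in exactly $k = E_1 \cdot E_2$ points, all with positive intersection sign, which is automatic for $J$-holomorphic curves. Taking the union $C^E_1 \cup C^E_2$ and smoothing nothing, we have a reducible nodal curve; I would instead smooth this configuration to a genuine immersed sphere $C$ in the class $E_1 + E_2$.

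The key geometric point is that since both components are spheres meeting in $k \geq 1$ transverse points, I can connect-sum (resolve) exactly one of the intersection points to glue the two spheres into a single immersed sphere of genus $0$, leaving the remaining $k-1$ intersection points as positively oriented transverse double points. This gives a symplectically immersed $2$-sphere $C$ representing $E_1 + E_2$ whose only singularities are $k - 1 \geq 0$ positive transverse double points. I would carry this out using the standard symplectic sum / resolution of a positive transverse intersection, which preserves the symplectic immersion condition and the sign of the remaining double points.

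It then remains to compute $c_1(C) = c_1(E_1 + E_2)$ and check the hypothesis $c_1(C) \geq 2$ of Theorem \ref{dusab+=1lemma}. Since each $C^E_i$ is an exceptional $-1$-sphere, adjunction on a $4$-manifold gives $c_1(E_i) = 1$ (indeed $E_i^2 = -1$ and $E_i \cdot E_i + c_1(E_i) = 2$ for an embedded sphere, forcing $c_1(E_i) = 1$). Hence
\[
c_1(E_1 + E_2) = c_1(E_1) + c_1(E_2) = 1 + 1 = 2,
\]
so $c_1(C) = 2 \geq 2$ and Theorem \ref{dusab+=1lemma} applies directly to conclude that $(M,\omega)$ is rational or ruled, giving $b_2^+ = 1$.

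The main obstacle I anticipate is the smoothing step: I must ensure that resolving a single positive transverse intersection of two symplectically embedded $J$-holomorphic spheres yields an honest symplectic immersion of the connected-sum sphere, with all the other $k-1$ double points surviving as positive transverse double points in the sense required by Theorem \ref{dusab+=1lemma}. This is a local symplectic model computation at the node (replacing a neighborhood of the crossing $z_1 z_2 = 0$ by $z_1 z_2 = \delta$), and the care is in verifying that the resulting surface is symplectic for small $\delta$ and that positivity of intersections is genuinely preserved; once that local model is in hand, the global count of $c_1$ and the appeal to Theorem \ref{dusab+=1lemma} are routine.
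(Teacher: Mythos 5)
Your proposal is correct and follows essentially the same route as the paper: resolve one intersection point to obtain an immersed sphere in the class $E_1+E_2$ with $k-1$ positive transverse double points, compute $c_1(E_1+E_2)=2$, and apply Theorem \ref{dusab+=1lemma}. The additional care you describe for the local smoothing model is a reasonable elaboration of a step the paper treats as standard.
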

\begin{proof}
To prove this, we resolve exactly $1$ of the intersection points of $C^E_1$ and $C^E_2$ to get a single sphere $C$ in the class $E_1+E_2$ which is immersed with $k-1$ positive transverse double points.  Notice that the immersion points of $C$ come from unresolved intersections of $C^E_1$ and $C^E_2$ which are all positive transverse intersections by positivity of intersections in dimension $4$.  Thus, it remains to show that $c_1(C)\geq 2$.  In fact, we have 
\[
c_1(C)=c_1(E_1+E_2)=c_1(E_1)+c_1(E_2)=1+1=2
\]
as desired.
\end{proof}
We now prove a similar lemma in the case where $(\widetilde{M},\widetilde{\omega})$ is the resolution of an orbifold $(M,\omega)$

\begin{lemma}\label{intersectingweighteddivisorlemma}
Let $\widetilde{M}^4$ be a symplectic manifold which is the resolution of $M^4$, a symplectic orbifold.  Furthermore, let $C^E$ be a weighted exceptional divisor as in Definition \ref{weightedexceptionaldivisordefinition}.  In particular, $\widetilde{M}^4$ has classes $\widetilde{E}$ and $Z_i^j$ for $j=1,2$ represented by curves $\widetilde{C}^E$ and $\widetilde{C}_i^j$ so that $\widetilde{E}$ is the class of an exceptional divisor, $\widetilde{E}\cdot Z_1^j=1$, $\widetilde{E}\cdot Z_l^j=0$ if $l>1$, and 
\[
Z_i^l\cdot Z_j^m=\left\{
\begin{array}{ll}
-a_i^l\leq -2\text{ if }i=j,~l=m\\
1\text{ if }l=m,~|i-j|=1\\
0\text{ else}
\end{array}\right.
\]
Then if $\widetilde{M}$ has an exceptional divisor $\widetilde{C}^{E'}$ in a class $\widetilde{E}'\neq\widetilde{E}$ so that either $\widetilde{E'}\cdot\widetilde{E}\neq 0$ or $\widetilde{E'}\cdot Z_{i_0}^{j_0}\neq 0$ for some $i_0,j_0$, then $b_2^+(M)=1$.
\end{lemma}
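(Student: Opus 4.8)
The plan is to reduce the statement to the two criteria already in hand, Lemma \ref{b+=1lemma} and Theorem \ref{dusab+=1lemma}, by exploiting the fact (Remark \ref{weightedblowupremarkdusa}, valid here because $r=1$) that the resolution, and hence the weighted blowdown of $C^E$, is realized by an honest sequence of smooth blowups and blowdowns of symplectic manifolds. I would split into the two cases dictated by the hypothesis. If $\widetilde{E}'\cdot\widetilde{E}\neq 0$, then since $\widetilde{C}^E$ and $\widetilde{C}^{E'}$ are distinct embedded $\widetilde{J}$-holomorphic $-1$ spheres, positivity of intersections in dimension four gives $\widetilde{E}'\cdot\widetilde{E}\geq 1$, so Lemma \ref{b+=1lemma} applied to $\widetilde{M}$ yields $b_2^+(\widetilde{M})=1$; since blowups preserve $b_2^+$ (Remark \ref{blowupremark}), this gives $b_2^+(M)=1$.

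The substantive case is $\widetilde{E}'\cdot Z_{i_0}^{j_0}\neq 0$. Here I would first invoke Theorem \ref{dusatheorem} with $\widetilde{\mathcal{A}}=\{\widetilde{E},\widetilde{E}'\}$, whose hypotheses $\widetilde{E}'\cdot Z_i\geq 0$ and $\widetilde{E}'\cdot\widetilde{E}\geq 0$ hold by positivity since $\widetilde{C}^{E'}$ is distinct from every curve in the chain. This produces a blowdownable $\widetilde{J}\in\widetilde{\mathcal{J}}(\mathcal{Z};\widetilde{\mathcal{A}})$ for which $\widetilde{E}$, $\widetilde{E}'$, and all the $Z_i^j$ are represented by embedded $\widetilde{J}$-holomorphic spheres meeting positively and transversally. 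By Remark \ref{almostcomplexstructureweightedblowdownremark} this $\widetilde{J}$ blows down, so I carry out the weighted blowdown of $C^E$ as in Remark \ref{weightedexceptionalblowdownremark}, contracting the chain $\widetilde{E}, Z_i^p, Z_j^q$ to a point by a succession of smooth blowdowns of $-1$ spheres, arriving at $\widehat{M}$ with an almost complex structure $\widehat{J}$.

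The image $\widehat{C}^{E'}$ of $\widetilde{C}^{E'}$ under this contraction is a $\widehat{J}$-holomorphic immersed sphere whose double points are positive and transverse, since positivity of intersections persists at each stage. It then remains to check $c_1(\widehat{C}^{E'})\geq 2$. The blowdown formula states that $c_1$ of the pushforward of a class equals $c_1$ of the class plus its intersection with the $-1$ sphere being contracted, and intersection numbers of distinct holomorphic curves never decrease under such a contraction. Hence the intersection of the running image of $\widetilde{E}'$ with the running image of $Z_{i_0}^{j_0}$ remains at least $\widetilde{E}'\cdot Z_{i_0}^{j_0}\geq 1$ up to the stage at which $Z_{i_0}^{j_0}$ is itself contracted as a genuine $-1$ sphere; at that stage $c_1$ jumps by at least $1$, so from $c_1(\widetilde{E}')=1$ we obtain $c_1(\widehat{C}^{E'})\geq 2$. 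Theorem \ref{dusab+=1lemma} then gives $b_2^+(\widehat{M})=1$, and since both the resolution and the blowdown preserve $b_2^+$, we conclude $b_2^+(M)=b_2^+(\widetilde{M})=b_2^+(\widehat{M})=1$.

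I expect the main obstacle to be the bookkeeping in this last case: one must secure a single $\widetilde{J}$ making every chain curve together with $\widetilde{C}^{E'}$ simultaneously holomorphic while keeping the configuration blowdownable, and then verify that each original chain curve, in particular $Z_{i_0}^{j_0}$, is actually contracted as an honest $-1$ sphere at some stage, so that the $c_1$ contribution from the hypothesized intersection is genuinely collected rather than dissipated. The enabling fact throughout is that, because $x$ is a smooth point, Remark \ref{weightedblowupremarkdusa} presents the entire contraction as ordinary smooth blowdowns, which is precisely what legitimizes the $c_1$ blowdown formula and the monotonicity of intersection numbers used above.
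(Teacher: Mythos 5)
Your proposal is correct, and the first case ($\widetilde{E}'\cdot\widetilde{E}\neq 0$) is exactly the paper's argument. In the second case your route genuinely diverges from the paper's. The paper also performs the successive smooth blowdowns of the chain $\widetilde{C}^E, Z_i^p, Z_j^q$ guaranteed by Remark \ref{weightedexceptionalblowdownremark}, but it \emph{stops} at the first intermediate stage at which the image of $C_{i_0}^{j_0}$ (for $(i_0,j_0)$ the first index pair in the blowdown order with $\widetilde{E}'\cdot Z_{i_0}^{j_0}\neq 0$) has become an honest $-1$ sphere; by minimality of $(i_0,j_0)$ the image of $\widetilde{C}^{E'}$ is at that stage still an untouched embedded exceptional divisor, so the partially blown-down manifold contains two intersecting exceptional divisors and Lemma \ref{b+=1lemma} applies directly. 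You instead contract the entire chain and track $c_1$ of the pushforward of $\widetilde{E}'$ via the blowdown formula $c_1(\pi_*A)=c_1(A)+A\cdot E$, concluding $c_1\geq 2$ and invoking Theorem \ref{dusab+=1lemma} on the image curve in $\widehat{M}$. Your bookkeeping is sound (the monotonicity of intersection numbers and of $c_1$ both follow from positivity of intersections with each contracted sphere, and every chain curve is indeed eventually contracted as a $-1$ sphere), and your version has the merit of not needing to single out a ``first'' intersecting index. What the paper's early-stopping buys is that both relevant curves are still \emph{embedded} when the $b_2^+$ criterion is applied, so the only singularities to handle are the transverse double points already covered by Lemma \ref{b+=1lemma}; in your fully-contracted picture the image $\widehat{C}^{E'}$ may acquire points where three or more local branches meet, or where branches are tangent, so it is not literally an immersed sphere with only transverse double points as Theorem \ref{dusab+=1lemma} requires. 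This is repairable by a standard local perturbation (or by stopping the contraction one step earlier, as the paper does), but as written it is the one step you would need to shore up.
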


\begin{proof}
First, notice that Lemma \ref{b+=1lemma} above implies that $b_2^+(\widetilde{M})=1$ if $\widetilde{E'}\cdot\widetilde{E}\neq 0$.  Since $\widetilde{M}$ differs from $M$ by a sequence of blowups, this implies that $b_2^+(M)=1$ as well.  Thus, we can assume that there is some $(i_0,j_0)$ so that $\widetilde{E'}\cdot Z_{i_0}^{j_0}\neq 0$.

Now recall from Remark \ref{weightedexceptionalblowdownremark} that the collection $\widetilde{C}^E$ and $\widetilde{C}_i^j$ can be successively blown down by smooth blowdowns of exceptional divisors starting with $\widetilde{C}^E$ to form the $(p,q)$-weighted blowdown of the weighted exceptional divisor $C^E$.  

Thus, if we begin performing these successive blowdowns, there will be some intermediate stage where we have a closed symplectic $4$-manifold $\widehat{M}$ so that the proper transform of $C_{i_0}^{j_0}$ to $\widehat{M}$ is an exceptional divisor, which we denote $\overline{C}_{i_0}^{j_0}$.  Then by our assumptions $\widetilde{C}^{E'}$ has a proper transform to a curve $\overline{C}^{E'}$ so that $\overline{C}^{E'}\cap\overline{C}_{i_0}^{j_0}\neq 0$.  Furthermore, if we assume that $(i_0,j_0)$ is the first pair of indices so that this intersection is non-zero, $\overline{C}^{E'}$ will be an exceptional divisor as well.

But then $\widehat{M}$ has two intersecting exceptional divisors, so that by Lemma \ref{b+=1lemma} above, $b_2^+(\widehat{M})=1$.  Now, since $\widehat{M}$ differs form $M$ by a series of blowups and blowdowns, this implies that $b_2^+(M)=1$ as well.
\end{proof}
\section{Proof of Main Theorem}
\subsection{Generalized Bundles}
In this section, we will give a technical definition which will be useful to our proof of Theorem \ref{maintheorem}.

\begin{definition}\label{generalizedbundle}
Let $\{V_\alpha\}_{\alpha\in A}$ be a finite open cover of $S^1$ by intervals so that all triple intersections are empty.  Furthermore, assume that $A$ has a partial ordering so that if $V_{\alpha\beta}:=V_\alpha\cap V_\beta\neq\emptyset$, then either $V_\alpha<V_\beta$ or $V_\beta<V_\alpha$.  Then a \textbf{generalized bundle} over $S^1$ is given by topological spaces $\mathcal{F}_\alpha$ with projections $\pi_\alpha:\mathcal{F}_\alpha\rightarrow S^1$ such that if $V_\alpha\cap V_\beta\neq\emptyset$ and $V_\alpha<V_\beta$, there is a fiberwise inclusion 
\[
\phi_{\alpha\beta}:\pi_\alpha^{-1}(V_{\alpha\beta})\longrightarrow\pi_\beta^{-1}(V_{\alpha\beta}).
\] 
Furthermore, a \textbf{section} of a generalized bundle is a collection of maps $s_\alpha:V_\alpha\rightarrow\mathcal{F}_\alpha$ satisfying $s_\alpha=s_\beta\circ\phi_{\alpha\beta}$ whenever $V_{\alpha\beta}\neq\emptyset$ and $V_{\alpha}<V_{\beta}$.
\end{definition}

\begin{remark}\label{generalizedbundleremark}
This definition differs from the standard definition of a bundle primarily in the fact that the fiber $F_x$ over a point $x\in S^1$ is allowed to change its topological type as we change $x$.  However, a section of a generalized bundle still gives us a notion of a smoothly varying family of elements of the $\mathcal{F}_\alpha$, with one for each $x\in S^1$.  This notion of section is the main reason we gave this definition. 
\end{remark}

\begin{example}\label{reducedspaceexampletrivial}
The family of reduced spaces corresponding to a symplectic $S^1$ action on $(X,\omega)$ gives a trivial example of a generalized bundle.  Namely, we can consider the cover of $S^1$ just given by all of $S^1$, and we can let 
\[
\mathcal{F}_{S^1}:=X/S^1
\]
\end{example}

We will now show how one could put a more complicated reduced bundle structure on the family of reduced spaces.  

\begin{example}\label{reducedspaceexample}
Let $(X^6,\omega)$ be a closed symplectic manifold with a symplectic, non-Hamiltonian $S^1$ action.  Then as before, this has an $S^1$ valued moment map and a family of reduced spaces $M_\lambda$ for $\lambda\in S^1$.  By our earlier assumptions, the fixed point set of this action is a finite set of isolated fixed points, which, perturbing $\omega$ if necessary, we can assume all happen at different moment map levels.  We denote these levels $\lambda_1,\ldots,\lambda_{2n}$.  

We arrange the $\lambda_i$ counterclockwise so that $\lambda_{i}$ has isotropy weights either $(p_i,q_i,-1)$ or $(-p_i,-q_i,1)$ for all $i$, for integers $p_i,q_i$ with $\gcd(p_i,q_i)=1$.  Then define $U_i=(\lambda_i,\lambda_{i+1})$ for $i=1,\ldots 2n-1$, and $U_{2n}=(\lambda_{2n},\lambda_1)$.  Also, define $I_i=(\lambda_i-\epsilon,\lambda_i+\epsilon)$, and assign the partial ordering $I_i<U_i$ for all $i=1,\ldots n$, $I_i<U_{i-1}$ if $i=2,\ldots 2n$, and $I_1<U_{2n}$.  This cover gives the reduced spaces the structure of a generalized bundle.  Indeed, we can define
\[
\mathcal{F}_{U_i}:=\bigcup_{\lambda\in U_i}M_\lambda,\quad\mathcal{F}_{I_j}:=\bigcup_{\lambda\in I_j}M_\lambda
\]
Then Lemma \ref{local form of (p,q,-r)} and Lemma \ref{homologyinvariantlemma} give that the spaces $\mathcal{F}_{U_i}$ and $\mathcal{F}_{I_j}$ are topological spaces which are fibered over $\lambda$ by smooth orbifolds, while on all overlaps they are equal to each other, so that the fiberwise inclusions can just be chosen to be the identity.
\end{example}
\begin{remark}
The generalized bundle $\widetilde{\mathcal{J}}$ that we eventually construct in the below proof will be very similar to the above example.  In particular, it will use the same cover $U_i$ and $I_j$ with the same ordering.  However, $\widetilde{\mathcal{F}}_{U_i}$ and $\widetilde{\mathcal{F}}_{I_j}$ will not be fibered by $M_\lambda$, but rather they will be fibered by carefully chosen spaces of almost complex structures of $\widetilde{M}_\lambda$.
\end{remark}

\subsection{Proof of Main Result}
We will now prove our main result, which we restate here for convenience.

\begin{proposition}
Suppose we have a closed symplectic manifold $(X^6,\omega)$ with a symplectic $S^1$ action with a non-empty set of isolated fixed points, all of whose isotropy weights are either $(p_i,q_i,-1)$ or $(-p_i,-q_i,1)$, where $p_i>q_i$ and $\gcd(p_i,q_i)=1$, and such that the $S^1$ action has no codimension $2$ isotropy.  Then the action is Hamiltonian.
\end{proposition}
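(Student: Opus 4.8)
The plan is to argue by contradiction: assume the action is not Hamiltonian, and then use the combinatorial structure of the reduced spaces, together with Theorem \ref{b+=1impliesdone}, to force $b_2^+=1$ at some regular level, which yields a contradiction. Since the action is non-Hamiltonian, by the discussion in the introduction we have an $S^1$-valued moment map $\Phi$, no critical points of index or coindex $0$ or of odd index, and so every fixed point has weights $(\pm p_i, \pm q_i, \mp 1)$ exactly as hypothesized. I would set up the generalized bundle structure of Example \ref{reducedspaceexample}, with levels $\lambda_1,\ldots,\lambda_{2n}$ arranged counterclockwise, and record how the reduced space $M_\lambda$ changes as $\lambda$ crosses each critical level. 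By Lemma \ref{local form of (p,q,-r)} and Remark \ref{(-p,-q,r)remark}, crossing a $(p_i,q_i,-1)$ level performs a $(p_i,q_i)$-weighted blowup, while crossing a $(-p_i,-q_i,1)$ level performs a $(p_i,q_i)$-weighted blowdown; between critical levels Lemma \ref{homologyinvariantlemma} gives orientation-preserving diffeomorphisms, which by Lemma \ref{diffeomorphismliftinglemma} lift to the resolutions $\widetilde{M}_\lambda$.

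The heart of the argument is to produce, on some resolution $\widetilde{M}_\lambda$, two \emph{distinct} exceptional spheres with nonzero intersection, or an exceptional sphere meeting one of the resolution chains $Z_i^j$, so that Lemma \ref{b+=1lemma} or Lemma \ref{intersectingweighteddivisorlemma} applies and forces $b_2^+=1$. The key object is the proper transform $\widetilde{E}$ of the weighted exceptional divisor created at a $(p_i,q_i,-1)$ level: since $r=1$, Remark \ref{weightedblowupremarkfulton} guarantees that $\widetilde{E}^2=-1$, so $\widetilde{E}$ is a genuine exceptional class with nonzero Gromov invariant, and it is connected to the two resolution chains $Z_i^p$ and $Z_j^q$ by $\widetilde{C}^E\cdot Z_1^p = \widetilde{C}^E\cdot Z_n^q = 1$. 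Because the Gromov invariant is a deformation invariant, I would track $\widetilde{E}$ as $\lambda$ increases: it persists as a class with nonzero invariant until we reach the level $\lambda_j$ where it must be blown \emph{down} (a $(-p,-q,1)$ fixed point). The plan is to follow $\widetilde{E}$ (and the classes $Z_i^j$ attached to it) across every intervening critical level, using the lifts $\widetilde{\phi}$ from Lemma \ref{homologyinvariantlemma2} to identify the classes in successive $\widetilde{M}_\lambda$, and to examine what happens when another weighted blowup or blowdown occurs in between.

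The main obstacle — and where the real work lies — is the bookkeeping of intersections when two weighted blowups (or a blowup and a blowdown) interact along the interval between the creation and destruction of $\widetilde{E}$. The idea is that if $\widetilde{E}$ ever has to be blown down, yet some other exceptional class or chain class created by an intervening operation meets it nontrivially, then Lemma \ref{intersectingweighteddivisorlemma} immediately gives $b_2^+=1$. Conversely, if no such intersection ever occurs, one has to show that the persistence of $\widetilde{E}$ as a $-1$ sphere, combined with the fact that the total space of reduced spaces closes up around $S^1$, is itself incompatible with $b_2^+\geq 2$; concretely, the exceptional class surviving all the way around and meeting the exceptional class it must be blown down against produces two distinct $-1$ spheres with positive intersection, whence Lemma \ref{b+=1lemma} applies. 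I expect the delicate part to be verifying that positivity of intersections is preserved under the lifts $\widetilde{\phi}$ and that the relevant classes remain distinct and effective at the critical level; this is exactly the step that uses the generalized-bundle section of almost complex structures in $\widetilde{\mathcal{J}}(\mathcal{Z})$ from Theorem \ref{dusatheorem}, chosen so that all the classes $\widetilde{E}$, $\widetilde{E}'$, and $Z_i^j$ are simultaneously represented by embedded $\widetilde{J}$-holomorphic spheres with positive transverse intersections. Once $b_2^+(M_\lambda)=1$ is established at one regular level, Theorem \ref{b+=1impliesdone} contradicts the assumption that the action is non-Hamiltonian, completing the proof.
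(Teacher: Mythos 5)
Your setup matches the paper's: assume non-Hamiltonian, use the $S^1$-valued moment map, record the weighted blowups/blowdowns across critical levels via Lemma \ref{local form of (p,q,-r)} and Remark \ref{(-p,-q,r)remark}, build the generalized bundle of almost complex structures with a section chosen as in Theorem \ref{dusatheorem}, and use Lemma \ref{intersectingweighteddivisorlemma} to show that the resolved weighted exceptional sphere $\widetilde{E}$ created at a $(p_1,q_1,-1)$ level passes through every subsequent critical level without meeting the curves involved there (since any nonzero intersection would force $b_2^+=1$ and hence a contradiction with $b_2^+>1$). Up to that point your outline is essentially the paper's Steps 1--3 and the first half of Step 4.

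The gap is in your endgame. You assert that the class $\widetilde{E}$ must eventually be blown down at some $(-p,-q,1)$ level, and that the contradiction in the ``no intersections ever occur'' case comes from $\widetilde{E}$ ``meeting the exceptional class it must be blown down against,'' producing two intersecting $-1$ spheres for Lemma \ref{b+=1lemma}. Neither claim is justified: the class blown down at a $(-p_j,-q_j,1)$ level is the divisor $\widetilde{E}_j^-$ whose area shrinks to zero there, not your $\widetilde{E}=\widetilde{E}_1^+$, and the disjointness you have just established means $\widetilde{E}$ never meets anything --- so no intersecting pair of exceptional spheres is ever produced. The paper's actual contradiction uses a quantitative fact you omit: $\widetilde{\omega}_\lambda(\widetilde{E}_1^+)$ is \emph{strictly increasing} as $\lambda$ moves counterclockwise (Step 3). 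Since $\widetilde{E}_1^+$ survives every crossing, one can continue it around $S^1$ indefinitely; after $k$ full loops one obtains, on the fixed space $\widetilde{M}_{\lambda_0}$, exceptional classes $\widetilde{C}^E_{\lambda_0+2k\pi}$ of strictly increasing symplectic area, hence infinitely many distinct exceptional classes. But a closed symplectic $4$-manifold with $b_2^+>1$ has only finitely many exceptional classes, and this is the contradiction. Without the area monotonicity (a Duistermaat--Heckman-type statement tied to the fact that $\widetilde{E}_1^+$ arises from a blowup whose size grows with $\lambda$), the mere persistence of a single exceptional class around the circle is perfectly consistent with $b_2^+\geq 2$, and your argument does not close.
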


\begin{proof}
We will assume that the action is not Hamiltonian and derive a contradiction.  Recall from before that since we have a symplectic circle action which is not Hamiltonian, we can assume we have an $S^1$ valued moment map and that we can form the corresponding reduced spaces $M_{\lambda}$ for $\lambda\in S^1$.  

Furthermore, as in Example \ref{reducedspaceexample} above, our moment map can be assumed to have $n$ critical levels which correspond to the isolated fixed points.  We can arrange them counterclockwise at levels $\lambda_i\in S^1$ so that $\lambda_{i}$ has isotropy weights $(p_i,q_i,-1)$ if $i$ is odd or $(-p_i,-q_i,1)$ if $i$ is even with $\gcd(p_i,q_i)=1$, and we can define the sets $U_i$ and $I_j$ as in Example \ref{reducedspaceexample}.

Also, since we assumed that the original $S^1$ action has no codimension $2$-isotropy, we know that each $M_\lambda$ is a symplectic orbifold with a finite number of isolated orbifold singularities, denoted $p_i^\lambda$. Thus we have $\widetilde{M}_\lambda$, the unique resolution of those singularities as in Definition \ref{orbifoldresolutiondefinition}.  As in Remark \ref{orbifoldresolution}, there are homology classes $Z_{i,j}^\lambda$ coming from the blowups used to resolve the singularities $p_i^\lambda$.  We have 
\begin{equation*}
Z^\lambda_{i,j}\cdot Z^\lambda_{l,m}=\left\{
\begin{array}{ll}
-k_{i,j}^\lambda\leq-2 &\text{if }i=l,~j=m\\
1 &\text{if } i=l,~|j-m|=1\\
0 &\text{else}
\end{array}\right.
\end{equation*}
We will let $\mathcal{Z}_\lambda$ denote the union of all these classes over $i,j$.

By Theorem \ref{b+=1impliesdone}, if for some regular level $\lambda$ we have $b_2^+(M_{\lambda})=1$, the action is Hamiltonian which is a contradiction.  Hence, $b_2^+(M_{\lambda})>1$ for all $\lambda$, and thus also $b_2^+(\widetilde{M}_\lambda)>1$.  

We will use this to derive a contradiction in $4$ steps.  First, we will use the language of generalized bundles to find a preferred family $\widetilde{J}(\lambda)$ of almost complex structures on $\widetilde{M}_\lambda$. 

\begin{step}[Constructing the generalized bundle $\widetilde{\mathcal{J}}$]
Consider $U_k$.  By Lemmas \ref{local form of (p,q,-r)} and \ref{homologyinvariantlemma}, there is a $\lambda'\in U_k$ and a smooth family of orientation-preserving diffeomorphisms 
\[
\phi_\lambda:M_{\lambda'}\longrightarrow M_\lambda.
\]
Recall that we can form $\widetilde{M}_\lambda$ with it corresponding set of homology classes $\mathcal{Z}_\lambda$.  As in Definition \ref{almostcomplexstructuredefinition}, we have the set $\widetilde{\mathcal{J}}(\mathcal{Z}_\lambda)$ so that any $\widetilde{J}\in\widetilde{\mathcal{J}}(\mathcal{Z}_\lambda)$ has that $Z_{i,j}^\lambda$ is represented by an embedded, $\widetilde{J}_\lambda$-holomorphic sphere, denoted $C_{i,j}^\lambda$.  Furthermore, by Lemma \ref{diffeomorphismliftinglemma} the diffeomorphisms $\phi_\lambda$ lift to diffeomorphisms $\widetilde{\phi}_\lambda$, where up to a reordering of the $i$ indices, \[
\phi_\lambda^*(Z^\lambda_{i,j})=Z^{\lambda'}_{i,j}.
\]
Also, since $b_2^+(\widetilde{M}_\lambda)>1$, the set $\widetilde{\mathcal{E}}_\lambda$ of homology classes of exceptional divisors on $\widetilde{M}_\lambda$ is finite, and by Lemma \ref{b+=1lemma}, if $\widetilde{E}\neq\widetilde{E}'\in\widetilde{\mathcal{E}}$, then $\widetilde{E}\cdot\widetilde{E}'=0$.  Consider the finite subset $\widetilde{\mathcal{A}}_\lambda\subset\widetilde{\mathcal{E}}_\lambda$ defined by the property that any $\widetilde{A}\in\widetilde{\mathcal{A}}_\lambda$ satisfies $\widetilde{A}\cdot Z_{i,j}^\lambda\geq 0$.  Then, as in Theorem \ref{dusatheorem}, there is a subset $\widetilde{\mathcal{J}}(\mathcal{Z}_\lambda,\widetilde{\mathcal{A}}_\lambda)\subset\widetilde{\mathcal{J}}(\mathcal{Z}_\lambda)$ which is path connected and residual in the sense of Baire so that for any $\widetilde{J}\in\widetilde{\mathcal{J}}(\mathcal{Z}_\lambda,\widetilde{\mathcal{A}}_\lambda)$, $\widetilde{A}\in\widetilde{A}_\lambda$ is represented by a smooth, embedded $\widetilde{J}$-holomorphic sphere which intersects each curve $C_{i,j}^\lambda$ transversally in $\widetilde{A}\cdot C_{i,j}^\lambda$ distinct points. We define
\[
\widetilde{\mathcal{J}}(\lambda):=\widetilde{\mathcal{J}}(\mathcal{Z}_\lambda,\widetilde{\mathcal{A}}_\lambda)
\]
Also, each $\widetilde{J}\in\widetilde{\mathcal{J}}(\lambda)$ is pulled back from an almost complex structure $J$ on $M_\lambda$, so that we get a corresponding family $\mathcal{J}(\lambda)$ in this fashion.

We further define
\[
\widetilde{\mathcal{F}}_{U_k}:=\bigcup_{\lambda\in U_k}\widetilde{\mathcal{J}}(\lambda)
\] 
We can use the isomorphisms $\widetilde{\phi}_\lambda$ to identify $\widetilde{\mathcal{F}}_{U_k}$ with an open subset of the set of almost complex structures $\widetilde{J}$ so that $\widetilde{J}\in\widetilde{\mathcal{J}}(\lambda',\widetilde{\omega}_t)$ for some smooth path of symplectic forms $\widetilde{\omega}_t$ on $\widetilde{M}_{\lambda'}$, which is a topological space.  Hence, $\widetilde{\mathcal{F}}_{U_k}$ is also a topological space, as desired.

Consider now $I_k=(\lambda_k-\epsilon,\lambda_k+\epsilon)$ for some $\epsilon$.  To define $\widetilde{\mathcal{F}}_{I_k}$, we will first construct an explicit family $\widetilde{J}(\lambda)$ of almost complex structures on $\widetilde{M}_\lambda$ for all $\lambda\in(\lambda_k-2\epsilon,\lambda_k+2\epsilon)$.  By our assumptions, Lemma \ref{local form of (p,q,-r)}, and Remark \ref{(-p,-q,r)remark} we know that if $\lambda_k$ has isotropy weights $(p_k,q_k,-1)$, then $M_{\lambda_k+t}$ is the $(p_k,q_k)$-weighted blowup of $M_{\lambda_k}$ at a smooth point $x^{\lambda_k}$ for all $t\in(0,2\epsilon]$, while if $\lambda_k$ has isotropy weights $(-p_k,-q_k,1)$, the same is true with the signs reversed.  Without loss of generality, we will assume the isotropy weights are $(p_k,q_k,-1)$.  They, by Lemma \ref{homologyinvariantlemma2}, we have orientation-preserving diffeomorphisms
\[
\phi_\lambda:M_{\lambda_k}\rightarrow M_\lambda
\]
for all $\lambda\in(\lambda_k-2\epsilon,\lambda_k]$, where $\phi_{\lambda_k}=\id$.  Furthermore, since $x^{\lambda_k}$ is a smooth point, we have neighborhoods in $M_{\lambda_k}$ denoted $\mathcal{N}(p_i^{\lambda_k})$ and $\mathcal{N}(\lambda_k)$ of the orbifold points $p_i^{\lambda_k}$ and $x^{\lambda_k}$ respectively so that $\mathcal{N}(\lambda_k)\cap\mathcal{N}(p_i^{\lambda_k})=\emptyset$.

Now, as before, consider the resolution $\widetilde{M}_{\lambda_k}$, with its corresponding set of homology classes $\mathcal{Z}_{\lambda_k}$.  Notice that since $x^{\lambda_k}$ stays away from $p_i^{\lambda_k}$, there is a corresponding point $\widetilde{x}^{\lambda_k}\in\widetilde{M}_{\lambda_k}$.  As in Definition \ref{almostcomplexstructuredefinition}, consider the set $\widetilde{\mathcal{J}}(\mathcal{Z}_k,\widetilde{\omega}_{\lambda_k})$ as above.  Recall from Lemma \ref{diffeomorphismliftinglemma} that we can lift the diffeomorphisms $\phi_\lambda$ to diffeomorphisms 
\[
\widetilde{\phi_\lambda}:\widetilde{M}_{\lambda_k}\rightarrow\widetilde{M}_\lambda.
\]
Thus, for all $t\in[0,2\epsilon)$, we can define a smooth family of symplectic forms $\widetilde{\omega_t}=\widetilde{\phi}_{\lambda_k-t}^*(\widetilde{\omega}_{\lambda_k-t})$ on $\widetilde{M}_{\lambda_k}$, so that we can form $\widetilde{\mathcal{J}}(\mathcal{Z}_{\lambda_k},\widetilde{\omega}_t)$.  Notice that given any $\widetilde{J}\in\widetilde{\mathcal{J}}(\mathcal{Z}_k,\widetilde{\omega}_t)$, $\widetilde{J}$ can be pushed forward by $\widetilde{\phi}_{\lambda_k-t}$ to $\widetilde{J}\in\widetilde{\mathcal{J}}(\mathcal{Z}_{\lambda_k-t},\widetilde{\omega}_{\lambda_k-t})$.  

Now choose a $\widetilde{J}\in\widetilde{\mathcal{J}}(\mathcal{Z}_{\lambda_k},\widetilde{\omega}_{\lambda_k})$ so that $\widetilde{J}$ equals $J_0$ near $x_{\lambda_k}$, the point in $M_{\lambda_k}$ being blown up, and so that there is a neighborhood $\mathcal{N}(x_{\lambda_k})$ so that no $\widetilde{J}_\lambda$ holomorphic exceptional divisors intersect $\mathcal{N}(x_{\lambda_k})$.  Then since the taming condition is open, we can choose $\epsilon$ depending on $\widetilde{J}$ small enough so that for all $t\in[0,2\epsilon)$, $\widetilde{J}\in\widetilde{\mathcal{J}}(\mathcal{Z}_{\lambda_k}, \widetilde{\omega}_t)$.  Thus, for each $t\in[0,2\epsilon)$, we can push $\widetilde{J}$ forward by $\widetilde{\phi}_{\lambda_k-t}$ to an almost complex structure $\widetilde{J}(\lambda_k-t)$ to get a family
\[
\widetilde{J}(\lambda)\in\widetilde{\mathcal{J}}(\mathcal{Z}_\lambda,\widetilde{\omega}_\lambda)
\]
for each $\lambda\in(\lambda_k-2\epsilon,\lambda_k]$.  Also, we can choose $\widetilde{J}(\lambda_k)$ so that for all $\lambda\in(\lambda_k-\epsilon,\lambda_k)$
\[
\widetilde{J}(\lambda)\in\widetilde{\mathcal{J}}(\lambda)
\]
where $\widetilde{\mathcal{J}}(\lambda)$ is as before.
Furthermore, as in Remark \ref{almostcomplexstructureremark}, there is a corresponding family 
\[
J(\lambda)\in\mathcal{J}(p_1^\lambda,\ldots,p_n^\lambda;\omega_\lambda)
\]
of almost complex structures on $M_\lambda$ which are integrable near each $p_i^\lambda$ so that $J(\lambda_k)=J_0$ near $x^{\lambda_k}$ and $J(\lambda)\in\mathcal{J}(\lambda)$ for $\lambda\in(\lambda_k-\epsilon,\lambda_k)$. By the above we know that $x^{\lambda_k}$ does not intersect any $J(\lambda)$-holomorphic weighted exceptional divisors.

Now, since for each $t\in(0,2\epsilon)$, $M_{\lambda_k+t}$ is equal to the $(p_k,q_k)$ weighted blowup of $M_{\lambda_k}$ at the point $x_{\lambda_k}$ and $J(\lambda_k)$ equals $J_0$ near $x_{\lambda_k}$, we get corresponding almost complex structures $J_{\lambda_k+t}$ which are integrable near the $(p_k,q_k)$ weighted exceptional divisor.  Also, any orbifold point on $M_{\lambda_k+t}$ either corresponds to some $p_i^{\lambda_k}$ on $M_{\lambda_k}$, or lies on the weighted exceptional divisor.  Thus, $J_{\lambda_k+t}$ is integrable near all the orbifold points $p_i^{\lambda_k+t}$, and we have
\[
J(\lambda_k+t)\in\mathcal{J}(p_1^{\lambda_k+t},\ldots,p_m^{\lambda_k+t};\omega_{\lambda_k+t}).
\]
Also, as before, we can choose $J(\lambda_k)$ so that $J(\lambda)$ defined in this way satisfies $J(\lambda)\in\mathcal{J}(\lambda)$.  Thus, we can blow these almost complex structures up to get almost complex structures $\widetilde{J}(\lambda)\in\widetilde{\mathcal{J}}(\lambda)$ for all $\lambda\in(\lambda_k,\lambda_k+2\epsilon)$.  

In particular, for all $\lambda\in (\lambda_k-2\epsilon,\lambda_k+2\epsilon)\supset I_k$, we have constructed a family $\widetilde{J}(\lambda)$ of almost complex structures on $\widetilde{M}_\lambda$ such that if $\lambda\neq\lambda_k$, $\widetilde{J}(\lambda)\in\widetilde{\mathcal{J}}(\lambda)$.  We define
\[
\widetilde{\mathcal{F}}_{I_k}:=\bigcup_{\lambda\in I_k}\widetilde{J}(\lambda)
\]
Then $\widetilde{\mathcal{F}}_{I_k}$ is diffeomorphic to an open interval, hence it is obviously a topological space.  Furthermore, since for $\lambda\neq\lambda_k$ we have $\widetilde{J}(\lambda)\in\widetilde{\mathcal{J}}(\lambda)$ there is a natural fiberwise inclusion from the piece of $\widetilde{\mathcal{F}}_{I_k}$ over $I_k\cap U_l$ into $\widetilde{\mathcal{F}}_{U_l}$, whenever $I_k\cap U_l\neq\emptyset$.

This completes the construction of a generalized bundle over $S^1$ which we will denote $\widetilde{\mathcal{J}}$.
\end{step}

\begin{step}[Showing that the generalized bundle $\widetilde{\mathcal{J}}$ has a non-zero section.]
We now show that the above generalized bundle has a smooth, non-zero section $\widetilde{J}_\lambda$.  We will do this by taking sections on each $I_j$ and patching them together over the $U_k$.  

First, consider $I_j=(\lambda_j-\epsilon,\lambda_j+\epsilon)$.  Recall from the definition of $\mathcal{F}_{I_j}$ above that for each $\lambda\in(\lambda_j-2\epsilon,\lambda_j+2\epsilon)\supset I_j$, we have an almost complex structure $\widetilde{J}(\lambda)$ on $\widetilde{M}_\lambda$ so that if $\lambda\neq \lambda_k$, $\widetilde{J}(\lambda)\in\widetilde{\mathcal{J}}(\lambda)$.  In particular, this defines a section on $I_j$ which has already been extended a little past $I_j$.

Next consider $U_k$.  We seek to find a section of $\widetilde{\mathcal{J}}$ over $U_k$ which equals $\widetilde{J}(\lambda)$ on $U_k\cap I_j$ whenever this intersection is non-empty.  Fixing a $\lambda_0$, Lemma \ref{homologyinvariantlemma2} gives diffeomorphisms 
\[
\widetilde{\phi}_\lambda:\widetilde{M}_{\lambda_0}\rightarrow\widetilde{M}_\lambda.
\]
We can use these diffeomorphisms to get
\[
\widetilde{\mathcal{J}}(\lambda)\cong\widetilde{\phi}_\lambda^*\widetilde{\mathcal{J}}(\lambda):=\widetilde{\mathcal{J}}(\lambda_0;\lambda)
\]
Notice that any $\widetilde{J}\in\widetilde{\mathcal{J}}(\lambda_0;\lambda)$ is an almost complex structure on $M_{\lambda_0}$ so that 
\[
(\widetilde{\phi}_{\lambda})_*(\widetilde{J})\in\widetilde{\mathcal{J}}(\lambda)
\]
Thus, to find a family $\widetilde{J}(\lambda)\in\widetilde{\mathcal{J}}(\lambda)$ over $U_k$, it suffices to find a path $\widetilde{J}(\lambda)\in\widetilde{\mathcal{J}}(\lambda_0;\lambda)$.

Now, for $\lambda\in(\lambda_k,\lambda_k+2\epsilon)\cup(\lambda_{k+1}-2\epsilon,\lambda_{k+1})$, we already have a choice of $\widetilde{J}(\lambda)$ on $\widetilde{M}_\lambda$, which as above gives us a choice of $\widetilde{J}(\lambda)\in\widetilde{\mathcal{J}}(\lambda_0;\lambda)$.  Consider the interval
\[
U_k\setminus\{(I_k\cup I_{k+1})\cap U_k\}=[\lambda_k+\epsilon,\lambda_{k+1}-\epsilon]
\]
and define the set
\[
\widetilde{\mathcal{J}}_k:=\bigcup_{\lambda\in[\lambda_k+\epsilon,\lambda_{k+1}-\epsilon]}\widetilde{\mathcal{J}}(\lambda_0;\lambda)
\]
This set is obviously fibered over $[\lambda_k+\epsilon,\lambda_{k+1}-\epsilon]$ by $\widetilde{\mathcal{J}}(\lambda_0;\lambda)$, which is a path connected set of $\widetilde{\phi}_\lambda^*(\widetilde{\omega}_\lambda)$-tame almost complex structures on $\widetilde{M}_{\lambda_0}$.  Thus, since the taming condition is open, the set $\widetilde{\mathcal{J}}_k$ defined in this way is path connected.  Also, as pointed out before, we already have two almost complex structures $\widetilde{J}(\lambda_k+\epsilon)$ and $\widetilde{J}(\lambda_{k+1}-\epsilon)$ defined on $\widetilde{\mathcal{J}}_k$, so that we can choose a path $\widetilde{J}(\lambda)$ connecting them so that 
\[
\widetilde{J}(\lambda)\in\widetilde{\mathcal{J}}(\lambda_0;\lambda),
\] which we can then push forward to a family 
\[
\widetilde{J}(\lambda)\in\widetilde{\mathcal{J}}(\lambda)
\]
for all $\lambda\in[\lambda_k+\epsilon,\lambda_{k+1}-\epsilon]$.  

In particular, this gives a path $\widetilde{J}(\lambda)$ on $U_k$ which agrees with the previous choice of $\widetilde{J}(\lambda)$ on $I_j$ whenever $U_k\cap I_j\neq\emptyset$ as desired.
\end{step}

For the rest of the proof, for $\lambda\in S^1$, let $\widetilde{J}(\lambda)$ denote a specific choice of a section of $\widetilde{\mathcal{J}}$, and $J(\lambda)$ the corresponding family of almost complex structures on $M_\lambda$ which pull back to $\widetilde{J}(\lambda)$ under the blowup maps.  To derive a contradiction, we will produce specific exceptional divisors on the spaces $\widetilde{M}_\lambda$ and use $J$-holomorphic curve techniques using the family $\widetilde{J}(\lambda)$ above.

\begin{step}[Constructing exceptional divisors on $U_k$ for odd $k$]
First, assume we have some $\lambda_k$ for $k$ odd which then has isotropy weights $(p_k,q_k,-1)$.  Recall that we have the resolution $\widetilde{M}_\lambda$ with its corresponding set of homology classes 
\[
\mathcal{Z}_\lambda=\bigcup_{i,j}Z_{i,j}^\lambda
\]

Now consider $I_k$.  As in Lemma \ref{local form of (p,q,-r)}, we can choose $\epsilon$ small enough so that the interval $I_{k}=(\lambda_{k}-\epsilon,\lambda_{k}+\epsilon)$ satisfies that given any $\lambda\in(\lambda_k,\lambda_{k}+\epsilon)$, $M_{\lambda}$ is the $(p_k,q_k)$ weighted blowup of size $\lambda-\lambda_{k}$ of $M_{\lambda_k}$ at $x_{\lambda_k}$.  In particular, there is a $(p_k,q_k)$ weighted exceptional divisor in the class $E_k^{\lambda,+}$ which passes through two isolated orbifold singularities of $M_\lambda$.  Thus, as in Remark \ref{weightedblowupremarkfulton} there is an ordering of the classes $Z_{i,j}^\lambda$ from step $1$ and a choice of indices $i^1_{k}=1,m_1$ and $i^2_{k}=1,m_2$ where $Z_{i,j}$ has $i=1,\ldots,m_j$, and a class $\widetilde{E}^{\lambda,+}_k$ satisfying the following properties.
\begin{enumerate}
\item $\widetilde{E}_k^{\lambda,+}$ is an exceptional class in $\widetilde{M}_\lambda$ which is the pullback of $E_k^{\lambda,+}$ under the natural projection from $\widetilde{M}_\lambda\rightarrow M_\lambda$.
\item For $j=1,2$, $\widetilde{E}^{\lambda,+}_k\cdot Z^\lambda_{i^j_{k},1}=1$
\item $\widetilde{E}^{\lambda,+}_k\cdot Z^\lambda_{i,l}=0$ for all other $i,l$.
\end{enumerate}
Furthermore, as $\lambda\in(\lambda_{k},\lambda_{k}+\epsilon)$ increases, $\widetilde{\omega}_\lambda(\widetilde{E}^{\lambda,+}_k)$ also increases, while $\widetilde{\omega}_\lambda(Z^\lambda_{i,j})$ can be fixed to be as small as desired for all $i,j$ and $\lambda$.  

Now recall from Lemma \ref{homologyinvariantlemma} that for some fixed $\lambda_0\in U_{k}$ there are diffeomorphisms $\phi_\lambda$ from $M_{\lambda_0}$ to $M_\lambda$, which by Lemma \ref{diffeomorphismliftinglemma} can be lifted to diffeomorphisms $\widetilde{\phi}_\lambda$ from $\widetilde{M}_{\lambda_0}$ to $\widetilde{M}_\lambda$.  Furthermore, the classes $\widetilde{E}^{\lambda,+}_k$ and $Z^\lambda_{i,j}$ all correspond to $E^{\lambda_0,+}_k$ and $Z^{\lambda_0}_{i,j}$ under these diffeomorphisms.  As such, we will omit the $\lambda$s from the notation, and simply refer to the classes as $\widetilde{E}_k^+$ and $Z^k_{i,j}$.  Furthermore, we can use these diffeomorphisms to extend the classes $\widetilde{E}_k^+$ as being defined over all of $U_{k}$, and we will still have $\widetilde{\omega}_\lambda(E_k^+)$ increases with $\lambda$ while $\widetilde{\omega}_\lambda(Z^k_{i,j})$ can be fixed as small as desired.

Now, consider $\lambda_{k+1}$ which has isotropy weights $(-p_{k+1},-q_{k+1},1)$.  By a similar argument, we can produce classes $\widetilde{E}^-_k$ and indices $i^j_{k}=1,m_j$ satisfying properties $(1)$ and $(2)$ above and so that $\widetilde{\omega}_\lambda(\widetilde{E}^-_k)$ decreases with $\lambda$.
\end{step}

\begin{step}[Deriving a contradiction]
Consider the class $\widetilde{E}^+_1$ as above.  We seek to use $J$-holomorphic curve techniques with the family $\widetilde{J}(\lambda)$ to show that for all $\lambda\in S^1$, the exceptional class $\widetilde{E}^+_1$ has a representative $\widetilde{C}_\lambda^E$ which a smooth, embedded $\widetilde{J}(\lambda)$-holomorphic sphere such that $\widetilde{\omega}_\lambda(\widetilde{C}_\lambda^E)$ is an increasing function of $\lambda$ as $\lambda$ moves counterclockwise around $S^1$.

If this were the case, then picking a base point $\lambda_0\in S^1$ and repeating this argument indefinitely, we would obtain exceptional spheres $\widetilde{C}_{\lambda_0+2\pi i}^E$, one for each $i$.  Also, since $\widetilde{\omega}_\lambda(\widetilde{C}_\lambda^E)$ is an increasing function of $\lambda$, we would have
\[
\widetilde{\omega}_{\lambda_0}(\widetilde{C}^E_{\lambda_0})<\widetilde{\omega}_{\lambda_0}
(\widetilde{C}_{\lambda_0+2\pi}^E)<\ldots<\widetilde{\omega}_{\lambda_0}(\widetilde{C}^E
_{\lambda_0+2k\pi})<\ldots
\]
so that all these exceptional spheres would represent different homology classes,  and the set $\mathcal{E}_{\lambda_0}$ of exceptional classes in $(\widetilde{M}_{\lambda_0},\widetilde{\omega}_{\lambda_0})$ would be infinite where by all our previous assumptions, $\widetilde{M}_{\lambda_0}$ is a closed, symplectic $4$ manifold with $b_2^+>1$, and thus has a finite number of exceptional classes.  This contradiction would then finish the proof of the theorem.

We will establish this by first showing that if $\widetilde{E}_1^+$ is represented by an embedded $\widetilde{J}(\lambda)$-holomorphic sphere $\widetilde{C}_{\lambda'}^E$ in $\widetilde{M}_{\lambda'}$ for some $\lambda'\in U_k$ or $\lambda'\in I_k$, then the same is true for all $\lambda\in U_k$ or $I_k$.  To finish the proof, we will then show that the spheres $\widetilde{C}_{\lambda'}^E$ can be chosen so that $\widetilde{\omega}_\lambda(\widetilde{C}_{\lambda}^E)$ is an increasing function of $\lambda$ as $\lambda$ moves counterclockwise around $S^1$.  

Assume first that for some $\lambda'\in U_k$, $\widetilde{E}_1^+$ is represented by an embedded $\widetilde{J}(\lambda)$ holomorphic sphere $C_{\lambda'}^E$.  By Lemma \ref{homologyinvariantlemma2}, we have diffeomorphisms 
\[
\phi_\lambda:M_{\lambda'}\longrightarrow M_{\lambda},\quad\widetilde{\phi}_\lambda:\widetilde{M}_{\lambda'}\longrightarrow\widetilde{M}
_\lambda.
\]
Thus, if $\widetilde{E}_1^+$ is represented by an embedded, $\widetilde{J}(\lambda')$ holomorphic sphere $\widetilde{C}_{\lambda'}^E$, we can push forward by $\widetilde{\phi}_\lambda$ to obtain embedded, $\widetilde{J}(\lambda)$-holomorphic sphere $\widetilde{C}_\lambda^E$ representing $\widetilde{E}_1^+$, as desired.

Next, assume that for some $\lambda'\in I_k$, $\widetilde{E}_1^+$ is represented by an embedded $\widetilde{J}(\lambda)$ holomorphic sphere $\widetilde{C}_{\lambda'}^E$.  Further assume that $\lambda_k$ has isotropy weights $(p_k,q_k,-1)$.  The case of $(-p_k,-q_k,1)$ has an analagous argument with some sign changes.  Since $\lambda_k$ has weights $(p_k,q_k,-1)$, Lemma \ref{homologyinvariantlemma2} implies that there are diffeomorphisms 
\[
\phi_\lambda:M_\lambda\longrightarrow M_{\lambda_k},\quad \widetilde{\phi}_\lambda:\widetilde{M}_{\lambda}\longrightarrow\widetilde{M}_{\lambda_k}
\]
for all $\lambda\in(\lambda_k-\epsilon,\lambda_k)$.  Thus, pushing forward by $\widetilde{\phi}_\lambda$, we see that $\widetilde{E}_1^+$ is represented by an embedded $\widetilde{J}(\lambda)$ holomorphic sphere $\widetilde{C}_\lambda^E$ in $\widetilde{M}_\lambda$ for all $\lambda\in(\lambda_k-\epsilon,\lambda_k]$ if and only if it is represented by an embedded, $\widetilde{J}(\lambda_k)$-holomorphic sphere $\widetilde{C}_{\lambda_k}^E$ in $\widetilde{M}_{\lambda_k}$.  Thus, to prove that we have spheres $\widetilde{C}_\lambda^E$ as desired for all $\lambda\in I_k$, it suffices to show we have spheres $\widetilde{C}_\lambda^E$ as desired for all $\lambda\in(\lambda_k,\lambda_k+\epsilon)$ if and only if we have a sphere $\widetilde{C}_{\lambda_k}^E$ as desired for $\lambda_k$.  

To see this, first recall from Lemma \ref{local form of (p,q,-r)} that $M_{\lambda_k+\epsilon}$ is the $(p_k,q_k)$-weighted blowup of $M_{\lambda_k}$ at the point $x^{\lambda_k}$, which we recall does not intersect any orbifold points and hence corresponds to a point $\widetilde{x}^{\lambda_k}$ in the resolution $\widetilde{M}_{\lambda_k}$.  In particular, if $C_{\lambda,k}^{E,+}$ is a curve representing $E_k^+$ as a $J(\lambda)$ holomorphic weighted exceptional divisor, there is a map 
\[
\rho_\epsilon:M_{\lambda_k}\longrightarrow M_{\lambda_k+\epsilon}
\]
so that the restriction
\[
\rho_\epsilon:M_{\lambda_k}\setminus\{x^{\lambda_k}\}\longrightarrow M_{\lambda_k+\epsilon}\setminus C_{\lambda,k}^{E,+}
\]
is an orientation preserving diffeomorphism and thus lifts to a diffeomorphism
\[
\widetilde{\rho}_\epsilon:\widetilde{M}_{\lambda_k}\setminus\{\widetilde{x}^{\lambda_k}\}\longrightarrow \widetilde{M}_{\lambda_k+\epsilon}\setminus S_k^{\lambda,+}
\]
where $S_k^+$ is the nodal curve in $\widetilde{M}_{\lambda_0+\epsilon}$ formed by taking the resolution of $C_{\lambda,k}^{E,+}$ as in \ref{weightedblowupremarkfulton}.  This breaks the proof of this case into $2$ subcases.  Namely, if we can define $\widetilde{C}^E_{\lambda_k}$ as desired, we must show that $\widetilde{x}^{\lambda_k}\not\in\widetilde{C}^E_{\lambda_k}$, while if we can define $\widetilde{C}^E_\lambda$ as desired for all $\lambda\in(\lambda_k,\lambda_k+\epsilon)$, we must show that $\widetilde{C}^E_\lambda\cap S_k^{\lambda,+}=\emptyset$.

Consider first the case where we have $\widetilde{C}_{\lambda_k}^E$ defined as desired.  Recall from step $1$ that the almost complex structure $\widetilde{J}(\lambda)$ was chosen so that $\widetilde{x}^{\lambda_k}$ does not intersect any exceptional spheres so that in particular, $\widetilde{x}^{\lambda_k}\not\in\widetilde{C}^E_{\lambda_k}$, as desired.

Next, consider the case where we have $\widetilde{C}_{\lambda}^E$ defined as desired for all $\lambda\in(\lambda_k,\lambda_k+\epsilon)$.  Since $b_2^+(M_\lambda)>1$ for all $\lambda$, Lemma \ref{intersectingweighteddivisorlemma} implies that $\widetilde{C}_{\lambda,k}^{E,+}\cap \widetilde{C}_{\lambda}^E=\emptyset$ and that $\widetilde{C}_i^\lambda\cap\widetilde{C}_{\lambda}^E=\emptyset$, where $\widetilde{C}_i^\lambda$ are representatives of the resolution curves if $C_{\lambda,k}^{E,+}$ as in \ref{weightedblowupremarkfulton}.  Thus, combining these we get $\widetilde{C}^E_\lambda\cap S_k^{\lambda,+}=\emptyset$ as desired.

We now construct the family $\widetilde{C}_\lambda^E$.  Since we assumed $\lambda_1$ has isotropy weights $(p_1,q_1,-1)$, we know from step $3$ that for each $\lambda\in U_1$ there is a $J(\lambda)$ holomorphic weighted exceptional divisor $C_\lambda^E$ in the class $E_1^+$.  Resolving $C_\lambda^E$ as in Remark \ref{weightedblowupremarkfulton} gives in particular an embedded $\widetilde{J}$-holomorphic sphere $\widetilde{C}_\lambda^E$ in the class $\widetilde{E}_1^+$ so that $\widetilde{\omega}_\lambda(\widetilde{C}_\lambda^E)$ increases as $\lambda$ moves counterclockwise around $S^1$.  Then, since $U_1\cap I_2\neq\emptyset$, we can extend this family to $I_2$.  Similarly, $I_2\cap U_2\neq\emptyset$, so we can further extend the family to $U_2$.  A simple induction shows that we can define $\widetilde{C}^E_\lambda$ for all $\lambda\in S^1$.  Furthermore, since it comes from a blowup at $\lambda_1$, we will still have $\widetilde{\omega}_\lambda(\widetilde{C}^E_\lambda)$ increases as $\lambda$ moves counterclockwise around $S^1$, as required.
\end{step}

\end{proof}

\end{document}